\newtheorem{theorem}{Theorem}[section]
\newtheorem{corollary}[theorem]{Corollary}
\newtheorem{lemma}[theorem]{Lemma}
\newtheorem{proposition}[theorem]{Proposition}
\theoremstyle{definition}
\theoremstyle{remark}
\newtheorem{remark}[theorem]{Remark}
\newcommand{\bC}{\mathbb C}
\newcommand{\vf}{\varphi}
\newcommand{\vfd}{\dot{\varphi}}
\global\long\def\oneb{\bar{1}}
\newcommand{\beq}{\begin{equation}}
\newcommand{\eeq}{\end{equation}}
\newcommand{\eps}{\epsilon}
\newcommand{\norm}{|\!|}
\newcommand{\bignorm}{\bigg|\!\bigg|}
\DeclareMathOperator{\im}{Im}
\DeclareMathOperator{\re}{Re}
\def\sideremark#1{\ifvmode\leavevmode\fi\vadjust{\vbox to0pt{\vss%    the remark
 \hbox to 0pt{\hskip\hsize\hskip1em%                                  will appear only
 \vbox{\hsize2.7cm\tiny\raggedright\pretolerance10000%                  on the side
  \noindent #1\hfill}\hss}\vbox to8pt{\vfil}\vss}}}%                  in 2.7cm
\begin{document}
\title[]{Deformations and Embeddings of Three-dimensional Strictly Pseudoconvex CR Manifolds}
\author{Sean N. Curry}
\address{Department of Mathematics, Oklahoma State University, Stillwater, OK 74078}
\email{sean.curry@okstate.edu}
\author{Peter Ebenfelt}
\address{Department of Mathematics, University of California at San Diego, La Jolla, CA 92093-0112}
\email{pebenfel@math.ucsd.edu}

\date{\today}
\thanks{2010 {\em Mathematics Subject Classification}. 32V20, 32V30}
\thanks{The second author was supported in part by the NSF grants DMS-1600701 and DMS-1900955.}

\begin{abstract}
Abstract deformations of the CR structure of a compact strictly pseudoconvex hypersurface $M$ in $\mathbb{C}^2$ are encoded by complex functions on $M$. In sharp contrast with the higher dimensional case, the natural integrability condition for $3$-dimensional CR structures is vacuous, and generic deformations of a compact strictly pseudoconvex hypersurface $M\subseteq \mathbb{C}^2$ are not embeddable even in $\mathbb{C}^N$ for any $N$. A fundamental (and difficult) problem is to characterize when a complex function on $M \subseteq \mathbb{C}^2$ gives rise to an actual deformation of $M$ inside $\mathbb{C}^2$. In this paper we study the embeddability of families of deformations of a given embedded CR $3$-manifold, and the structure of the space of embeddable CR structures on $S^3$. We show that the space of embeddable deformations of the standard CR $3$-sphere is a Frechet submanifold of $C^{\infty}(S^3,\mathbb{C})$ near the origin. We establish a modified version of the Cheng-Lee slice theorem in which we are able to characterize precisely the embeddable deformations in the slice (in terms of spherical harmonics). We also introduce a canonical family of embeddable deformations and corresponding embeddings starting with any infinitesimally embeddable deformation of the unit sphere in $\mathbb{C}^2$.
\end{abstract}

\maketitle

\section{Introduction and Main Results}

A fundamental problem in CR geometry is that of characterizing embeddability of abstract CR manifolds, where a CR manifold is said to be \emph{embeddable} if it is CR embeddable in $\mathbb{C}^N$ for some $N$. By the work of Boutet de Monvel and Kohn \cite{BoutetdeMonvel1975, Kohn1986}, embeddability of compact strictly pseudoconvex CR manifolds can be characterized in terms of a closed range property of $\bar{\partial}_b$. In particular, when the dimension of the CR manifold is at least $5$ it is always embeddable \cite{BoutetdeMonvel1975}. On the other hand, compact strictly pseudoconvex CR $3$-manifolds are generically not embeddable \cite{BurnsEpstein1990b}. The first known examples of such nonembeddable CR $3$-manifolds go back to Rossi \cite{Rossi1965} who showed that certain classical $\mathrm{SU}(2)$-invariant structures on $S^3$ are not embeddable (though, being real analytic, they are locally embeddable); a locally nonembeddable example was given by Nirenberg \cite{Nirenberg1974,Nirenberg1975}. (Nirenberg's example can be compactified to give a CR structure on $S^3$, and his construction already indicated that nonembeddability was generic in the compact case.) The question of embeddability of compact strictly pseudoconvex CR $3$-manifolds has continued to receive much attention, and many authors have sought to achieve a deeper understanding of the set of embeddable structures. Epstein \cite{Epstein1998,Epstein2012} has studied the set of embeddable deformations of a given compact embeddable CR structure in terms of index theory for the corresponding (relative) Szeg\H{o} projectors, and shown that the set of embeddable structures is closed in the $C^{\infty}$ topology \cite{Epstein2012}. Chanillo, Chiu and Yang \cite{ChanilloChiuYang2012,ChanilloChiuYang2013} have given a sufficient condition for embeddability in terms of CR Yamabe invariants; specifically they show that a compact CR structure is embeddable if it has positive Yamabe invariant and nonnegative CR Paneitz operator. A partial converse has recently been established by Takeuchi \cite{Takeuchi2019-arxiv} who showed that the CR Paneitz operator of an embeddable compact CR $3$-manifold is always nonnegative.

In this paper we study the embeddability of families of abstract deformations of a fixed compact strictly pseudoconvex CR $3$-manifold embedded in $\mathbb{C}^2$, and the structure of the space of embeddable deformations (as a subset of the space of all abstract deformations) of the standard CR $3$-sphere in $\mathbb{C}^2$. By the stability theorem of Lempert \cite{Lempert1994}, a small abstract deformation of a compact strictly pseudoconvex hypersurface in $\mathbb{C}^2$ is embeddable (in $\mathbb{C}^N$ for some $N$) if and only if it is embeddable in $\mathbb{C}^2$. We therefore restrict our attention to embeddability in $\mathbb{C}^2$. We shall mainly consider CR structures on the $3$-sphere $S^3$ near its standard CR structure, i.e. the strictly pseudoconvex CR structure that it inherits as the boundary of the unit ball in $\mathbb{C}^2$. Recall that a strictly pseudoconvex CR structure $(M,H,J)$ on a smooth $3$-manifold $M$ is a contact distribution $H\subseteq TM$ equipped with a bundle endomorphism $J:H\to H$ satisfying $J^2 = -\mathrm{id}$. When $M=S^3$, by a result of Eliashberg \cite{Eliashberg1990}, a CR structure can be embedded in $\mathbb{C}^2$ only if the underlying contact structure agrees with that of the standard CR sphere.
%By Gray's theorem, when considering families of CR structures near the standard CR sphere $(S^3, H, J_0)$ there is no loss of generality in fixing the underlying contact distribution $H$.
Let $\Gamma(\mathcal{J})$ denote the space of smooth positively oriented CR structures on $S^3$ compatible with its standard contact distribution $H$. Let $\Gamma(\mathcal{J})_{emb} \subset \Gamma(\mathcal{J})$ denote the subset of CR structures that are embeddable in $\mathbb{C}^2$.
In \cite{ChengLee1995} it is shown that $\Gamma(\mathcal{J})$ is a smooth tame Fr\'echet manifold in the sense of Hamilton \cite{Hamilton1982BAMS}, with respect to the scale of standard $L^2$-based Sobolev spaces on $M$. The same holds for the space of embeddable CR structures near the standard CR $3$-sphere:
\begin{theorem}[\cite{Bland1994,Lempert1997}]\label{thm:Frechet}	
$\Gamma(\mathcal{J})_{emb} \subset \Gamma(\mathcal{J})$ is a smooth tame Fr\'echet submanifold near the standard CR sphere.
\end{theorem}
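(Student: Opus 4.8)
The plan is to reduce Theorem~\ref{thm:Frechet} to a statement about a Cheng--Lee--type slice, and then to an explicit spherical-harmonic description of embeddability within that slice. Let $\mathcal{G}$ denote the group of orientation-preserving contactomorphisms of $(S^3,H)$; it acts on $\Gamma(\mathcal{J})$ by pullback, fixing the standard structure $J_0$ with stabilizer $\mathrm{Aut}_{CR}(S^3,J_0)\cong\mathrm{PSU}(2,1)$. The key elementary point is that $\Gamma(\mathcal{J})_{emb}$ is $\mathcal{G}$-invariant: if $f$ is a CR embedding of $(S^3,J)$ into $\mathbb{C}^2$ and $\Phi\in\mathcal{G}$, then $f\circ\Phi$ is a CR embedding of $(S^3,\Phi^*J)$. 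By the Cheng--Lee slice theorem \cite{ChengLee1995} there is a $\mathrm{PSU}(2,1)$-invariant tame Fréchet slice $\mathcal{S}_0\subset\Gamma(\mathcal{J})$ through $J_0$, transverse to the $\mathcal{G}$-orbit, and a tame diffeomorphism $\mathcal{G}\times_{\mathrm{PSU}(2,1)}\mathcal{S}_0\xrightarrow{\ \sim\ }U$ onto a neighbourhood $U$ of $J_0$ in $\Gamma(\mathcal{J})$, $[\Phi,s]\mapsto\Phi^*s$. Since $\Gamma(\mathcal{J})_{emb}$ is invariant under all of $\mathcal{G}$ (in particular under $\mathrm{PSU}(2,1)$, so that $\mathcal{S}_{0,emb}:=\mathcal{S}_0\cap\Gamma(\mathcal{J})_{emb}$ is $\mathrm{PSU}(2,1)$-invariant), this diffeomorphism carries $\mathcal{G}\times_{\mathrm{PSU}(2,1)}\mathcal{S}_{0,emb}$ precisely onto $U\cap\Gamma(\mathcal{J})_{emb}$. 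Hence it suffices to prove that $\mathcal{S}_{0,emb}$ is a tame Fréchet submanifold of $\mathcal{S}_0$ near $J_0$: then $\mathcal{G}\times_{\mathrm{PSU}(2,1)}\mathcal{S}_{0,emb}$ is a tame Fréchet submanifold of $\mathcal{G}\times_{\mathrm{PSU}(2,1)}\mathcal{S}_0\cong U$.

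It remains to describe $\mathcal{S}_{0,emb}$. Parametrize the structures in $\Gamma(\mathcal{J})$ near $J_0$ by complex functions $\varphi\in C^\infty(S^3,\mathbb{C})$ with $\|\varphi\|_{C^0}<1$, writing $T^{1,0}_\varphi=\{Z+\varphi\,\bar Z:Z\in T^{1,0}\}$ for a global frame $\bar Z$ of $T^{0,1}$; the (automatically satisfied) integrability of $J_\varphi$ together with Lempert's stability theorem \cite{Lempert1994} reduces embeddability to the existence of CR coordinate functions close to $z_1,z_2$. First I would compute the linearization at $\varphi=0$ of this condition: expanding everything in the $\mathrm{SU}(2)$-isotypic decomposition of $C^\infty(S^3,\mathbb{C})$ and solving the relevant $\bar\partial_b$-equations mode by mode, one identifies the space $V$ of \emph{infinitesimally embeddable} deformations as an explicit closed $\mathrm{SU}(2)$-invariant subspace of $C^\infty(S^3,\mathbb{C})$ cut out by the vanishing of prescribed spherical-harmonic components. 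I would then choose the slice $\mathcal{S}_0$ compatibly with this decomposition --- this is the promised ``modified Cheng--Lee slice'' --- so that $\mathcal{S}_0\cap V$ is itself a closed linear subspace of $\mathcal{S}_0$, and establish the two inclusions giving $\mathcal{S}_{0,emb}=\mathcal{S}_0\cap V$. For ``$\supseteq$'' one must show that every $\varphi\in\mathcal{S}_0\cap V$ actually yields an embeddable structure; I would do this by explicitly constructing a CR embedding of $(S^3,J_\varphi)$ into $\mathbb{C}^2$ --- a canonical family of embeddings seeded by the infinitesimal solution and built as a tamely convergent iteration or series in $\varphi$. For ``$\subseteq$'' one must show that an embeddable $\varphi$ in the slice is infinitesimally embeddable; here the slice normalization is essential, and one can appeal to the stability of the range of $\bar\partial_b$ along the embeddable locus, or to the rigidity furnished by Takeuchi's nonnegativity of the CR Paneitz operator of embeddable structures \cite{Takeuchi2019-arxiv}, to rule out the ``bad'' harmonic components. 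Since $\mathcal{S}_0\cap V$ is a closed linear subspace it is trivially a tame Fréchet submanifold, and tameness of all the identifications is inherited from the tameness of the Cheng--Lee construction and of the subelliptic estimates for $\bar\partial_b$.

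I expect the main obstacle to be the ``$\supseteq$'' inclusion: manufacturing honest embeddings out of infinitesimally embeddable data. In dimension three there is no soft argument available, and one faces a genuinely nonlinear problem; controlling it will require either a Nash--Moser iteration in which the slice normalization is used precisely to annihilate the cokernel of the linearized embedding operator, or the explicit construction of the canonical embeddings together with a verification that the resulting maps are smooth up to the boundary, are actual embeddings, and depend tamely on $\varphi$. A secondary difficulty, already needed to make the reduction of the first paragraph fully rigorous, is choosing the modified slice so that it is simultaneously transverse to the contactomorphism orbits and adapted to the spherical-harmonic characterization of embeddability. (One could instead try to argue directly with the space $\mathcal{E}$ of CR embeddings near the standard inclusion: its image in $\Gamma(\mathcal{J})$ is the germ of $\Gamma(\mathcal{J})_{emb}$, and its fibres are the postcomposition orbits of biholomorphisms of the bounded domains, since a CR diffeomorphism between strictly pseudoconvex boundaries extends to a biholomorphism of the domains; but exhibiting $\mathcal{E}\to\Gamma(\mathcal{J})$ as a tame submersion onto its image appears no easier than the slice approach.)
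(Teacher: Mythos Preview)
Your overall architecture matches the paper's: modify the Cheng--Lee slice so that the embeddable part is a closed linear subspace, whence Theorem~\ref{thm:Frechet} is immediate. In the paper's notation the slice is $\mathfrak{D}'_{BE}\oplus\mathfrak{D}_0^\perp$ (Theorem~\ref{thm:CL-slice-mod}) and the embeddable subspace is $\mathfrak{D}'_{BE}=(\mathfrak{D}'_{BE}\oplus\mathfrak{D}_0^\perp)\cap\mathfrak{D}_0$ (Theorem~\ref{thm:slice}), consistent with your $\mathcal{S}_0\cap V$. (A minor technical difference: the paper handles the noncompact automorphism group $\mathrm{Aut}(S^3)$ by working with \emph{marked} structures rather than with the quotient $\mathcal{G}\times_{\mathrm{PSU}(2,1)}\mathcal{S}_0$, but this is cosmetic.)

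However, you have the difficulty backwards, and your proposal for the hard direction has a genuine gap. The ``$\supseteq$'' inclusion---that every small $\varphi\in\mathfrak{D}'_{BE}$ is embeddable---was already proved by Burns--Epstein \cite[Theorem~5.3]{BurnsEpstein1990b}; the slice is chosen precisely so this result applies off the shelf (the paper also gives an independent construction, Theorem~\ref{thm:BE-parametric}). The actual new content is ``$\subseteq$'': every small element of $(\mathfrak{D}'_{BE}\oplus\mathfrak{D}_0^\perp)\setminus\mathfrak{D}'_{BE}$ is \emph{non}embeddable. The paper explicitly records that this was open---Burns--Epstein had it on a dense $G_\delta$, Epstein's index theory \cite{Epstein1998} upgraded this to an open set, but neither closes the gap. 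Your proposed tools, stability of the range of $\bar\partial_b$ or Takeuchi's Paneitz nonnegativity \cite{Takeuchi2019-arxiv}, are merely necessary conditions for embeddability; there is no evident mechanism by which either forces the $\mathfrak{D}_0^\perp$-component of a slice element to vanish \emph{exactly}. The paper's argument (Proposition~\ref{prop:emb-slice}) is of a completely different nature and rests on Lempert's extremal-disc theory: given embedded $M\subset\mathbb{C}^2$ near the sphere, one passes to the Kobayashi indicatrix $\partial B$ of the bounded domain via Lempert's circular representation, invokes the Bland--Duchamp result \cite{BlandDuchamp1991} that the deformation tensor of $M$ relative to $\partial B$ has only nonnegative Fourier modes in the circular variable, and then uses an $S^1$-equivariant contact normalization of $\partial B$ to $S^3$ (Lemma~\ref{lem:CL-slice-mod-S1}, itself a small Nash--Moser argument) together with the composition formula \eqref{eq:chain-of-defs} to conclude $\varphi_{M,S^3}\in\mathfrak{D}'_{BE}$. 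This geometric input---the circular representation and its Fourier positivity---is what your outline is missing.
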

To understand the embeddable CR structures on $S^3$ more concretely, we parametrize $\Gamma(\mathcal{J})$ by complex functions on $S^3$ in the following way. First, note that specifying a CR structure $J$ compatible with $H$ is the same as specifying its $\pm i$ eigenspaces $T^{1,0}$ and $T^{0,1}=\overline{T^{1,0}}$ as subbundles of $\mathbb{C}\otimes H$. Let $(z,w)$ denote the coordinates on $\mathbb{C}^2$ and define the following vector fields on $S^3$,
\beq\label{eqn:S3-Z1}
Z_1 = \bar{w}\frac{\partial}{\partial z} - \bar{z} \frac{\partial}{\partial w}, \qquad Z_{\oneb}= \overline{Z_1}
%\bar{w}\partial_z - \bar{z} \partial_w, \qquad Z_{\oneb}= \overline{Z_1}
\eeq
spanning $T^{1,0}$ and $T^{0,1}$ respectively for the standard CR $3$-sphere $(S^3,H,J_0)$. A complex function $\varphi = \varphi_{1}{}^{\oneb}$ on $S^3$ with $\norm\varphi\norm_{\infty}<1$ defines an oriented CR structure on $(S^3,H)$ by defining its holomorphic tangent space $^{\vf}T^{1,0}$ to be spanned by
$$
Z^{\varphi}_1 = Z_1 + \varphi_1{}^{\oneb} Z_{\oneb}.
$$
(Up to complex conjugation, all CR structures compatible with $H$ are realized this way.) Strictly speaking, $\varphi$ should be interpreted as a section of $(T^{1,0})^*\otimes {T^{0,1}}$  and we refer to $\varphi$ as the \emph{deformation tensor}, though we usually trivialize $(T^{1,0})^*\otimes {T^{0,1}}$ using $Z_1$ and $Z_{\oneb}$ in order to think of $\varphi$ as a function. We let $\mathfrak{D}$ denote the space of smooth deformation tensors, and let $\mathfrak{D}_{emb}\subset \mathfrak{D}$ be the subset of deformations that are embeddable in $\mathbb{C}^2$. The main goal of this paper is to better understand the space of embeddable deformation tensors $\mathfrak{D}_{emb}$ on $S^3$, thought of as a space of functions using the standard frame $Z_1$, $Z_{\oneb}$.

In \cite{BurnsEpstein1990b} Burns and Epstein showed that there is an infinite dimensional linear space within the space of embeddable deformation tensors $\mathfrak{D}_{emb}$ near the origin (i.e. the trivial deformation corresponding to the standard structure on $S^3$), characterized by the vanishing of certain terms in the spherical harmonic decomposition. To make this more precise we introduce the spherical harmonic spaces $H_{p,q}$ of functions on $S^3$ that are the restrictions of harmonic homogeneous polynomials of bidegree $(p,q)$ on $\mathbb{C}^2$ for each $p,q\geq 0$.
We denote the component of $\varphi$ in $H_{p,q}$ by $\varphi_{p,q}$, so that the $L^2$ orthogonal spherical harmonic decomposition of $\vf$ is given by $\varphi = \sum_{p,q} \varphi_{p,q}$. Define $\mathfrak{D}_{BE}\subset \mathfrak{D}$ to be the set of all deformation tensors $\varphi$ such that $\varphi_{p,q}=0$ if $q < p + 4$ (our deformation tensor is the conjugate of Burns and Epstein's). Burns and Epstein showed that if $\varphi \in \mathfrak{D}_{BE}$ is sufficiently small in $C^4$ then the deformation is embeddable. This has a clear conceptual explanation given by Bland \cite{Bland1994} in terms of Lempert's theory of extremal discs for the Kobayashi metric, the corresponding circular  representation, and nonnegativity of the Fourier coefficients of the conjugated deformation tensor $\overline{\vf}$ (relative to an $S^1$-invariant frame); cf.\ \cite{BlandDuchamp1991,Lempert1992, Lempert1994, Patrizio1987}. Examining the linearized action of the contact diffeomorphisms on the space of CR structures on $S^3$ suggests that the space of Burns-Epstein deformations (or more precisely a certain subspace of the Burns-Epstein deformations satisfying an additional condition along the critical diagonal $p = q + 4$) should give a slice for the action of the group of contact diffeomorphisms on the space of embeddable CR structures. But this has not been fully resolved in the literature; in particular, such a result has not been established in the $C^{\infty}$ case. One of our main results is a slice theorem for the $C^{\infty}$ embeddable CR structures on $S^3$ near its standard CR structure, see Theorem \ref{thm:slice}. To do this we first prove a modified version of the Cheng-Lee slice theorem \cite{ChengLee1995} for the space of abstract deformations of the standard CR structure on $S^3$, and then show that restricting to a natural subspace of the modified slice gives a slice theorem for the embeddable CR structures.

Before stating our slice theorems we briefly discuss the corresponding linearized problem. Given any CR hypersurface $M \subseteq \mathbb{C}^2$, the infinitesimally embeddable abstract deformations may be understood concretely as follows. Let $M_t$ be any smooth $1$-parameter family of strictly pseudoconvex hypersurfaces in $\mathbb{C}^2$ with $M_0=M$, defined as the zero loci of a smooth family of defining functions $\rho_t$. It is always possible to find a family of contact diffeomorphisms $\psi_t:M\to M_t$ with $\psi_0 =\mathrm{id}$ parametrizing the family $M_t$. Using $\psi_t$ one may pull back the CR structures of the $M_t$ to $M$ in order to obtain a family of CR structures on $M$ whose holomorphic tangent spaces are spanned by $Z_1^t = Z^0_1 + \vf_1{}^{\oneb}(t)Z^0_{\oneb}$ where $Z^0_1$ is
a (unitary) frame for the holomorphic tangent space of $M=M_0$. For purely aesthetic reasons, we lower the index $\oneb$ on $\vf_1{}^{\oneb}(t)$ using the Levi form of $\rho_t$ to obtain $\vf_{11}(t)$. A straightforward geometric calculation shows that if $\vfd = \vfd_{11} = \left.\frac{d}{dt}\right|_{t=0} \vf_{11}(t)$ then
\beq\label{eq:inf-emb-eqn}
\vfd_{11} = (\nabla_1\nabla_1 + iA_{11})f
\eeq
for some function $f$ where $\mathrm{Re}\,f = -\dot{\rho} =-\left.\frac{d}{dt}\right|_{t=0} \rho_t |_{M}$ is the normal velocity of the deformation at $t=0$ (see, e.g, \cite{BlandEpstein1996,Cheng2014,CE2018-obstruction-flatI,HirachiMarugameMatsumoto2017} or Lemma \ref{lem:embed} below); here $\nabla$ is the Tanaka-Webster connection of the contact form $i\partial \rho_0 |_M$ and $A_{11}$ is the corresponding pseudohermitian torsion. In the case of the standard CR sphere, defined by $\rho_0=1-|z|^2-|w|^2$, \eqref{eq:inf-emb-eqn} simply becomes
\beq
\vfd_{11} = Z_1 Z_1 f.
\eeq
The space $\mathfrak{D}_0$ of infinitesimally embeddable deformation tensors on $S^3$ is easily understood using spherical harmonics. The vector field $Z_1$ sends each $H_{p,q}$ isomorphically onto $H_{p-1,q+1}$ unless $p=0$ in which case $Z_1$ is zero. It follows that $\vfd$ is an embeddable infinitesimal deformation (i.e.\ $\vfd$ is in the range of $Z_1Z_1$) if and only if $\vfd_{p,q}=0$ for $q=0,1$. Its easy to see that every infinitesimally embeddable deformation tensor $\vfd$ can be realized as $\left.\frac{d}{dt}\right|_{t=0}\vf(t)$ for some family of embeddable deformation tensors $\vf(t)$. We therefore sometimes refer to $\vfd\in\mathcal{D}_0$ as a \emph{linearized embeddable deformation}. 

Having understood the embeddability problem at the linearized (i.e.\ infinitesimal) level about the standard CR 3-sphere, it is natural to ask if we can similarly characterize the embeddable deformations. Such a characterization is possible if we work modulo contact diffeomorphisms. Let $\mathcal{C}$ denote the space of contact diffeomorphisms on $S^3$. The Lie algebra of $\mathcal{C}$ is the space of contact (Hamiltonian) vector fields, which can be identified with $C^{\infty}(S^3,\mathbb{R})$ once a contact form on $S^3$ has been chosen (we always take the standard contact form $\theta = i(zd\bar{z} + wd\bar{w})$ on $S^3$ which normalizes the Levi form to be $h_{1\oneb} =1$ in the frame $Z_1$). The linearization of the natural action $\mathcal{C}\times \Gamma(\mathcal{J}) \to \Gamma(\mathcal{J})$ at $(\mathrm{id},J_{0})$ is
\beq\label{eq:linearization-cont-diff}
(g,\vfd_{11}) \mapsto \vfd_{11} + iZ_1Z_1 g,
\eeq
where $g \in C^{\infty}(S^3,\mathbb{R})$ is the potential for a contact (Hamiltonian) vector field and $\vfd$ is a deformation tensor on $S^3$ (here we are identifying $\mathfrak{D}$ with the tangent space of $\Gamma(\mathcal{J})$ at $J_0$). As an immediate consequence of \eqref{eq:linearization-cont-diff}, it was observed in Burns-Epstein \cite{BurnsEpstein1990b} that an infinitesimal slice for the action of the contact diffeomorphisms on CR structures at $J_0$ is given by $\mathfrak{D}'_{BE}\oplus \mathfrak{D}_0^{\perp}$ where $\mathfrak{D}_0^{\perp} \subseteq \mathfrak{D}$ is the $L^2$ orthogonal complement to $\mathfrak{D}_0$ and $\mathfrak{D}'_{BE} \subseteq \mathfrak{D}_{BE}$ is the subspace of all $\varphi\in \mathfrak{D}_{BE}$ that additionally satisfy the reality condition $\im\,((Z_{\oneb})^2 \varphi_{p,p+4}) = 0$ along the critical diagonal. (The latter reality condition is equivalent to saying that $\vf$ must be $L^2$ orthogonal to the image of the real $S^1$-invariant functions on $S^3$ under $i(Z_1)^2$, where the inner product is the real part of the complex inner product.)

In Cheng-Lee \cite{ChengLee1995} it was shown that the space of marked CR structures on $S^3$ near the standard CR structure can be locally identified with $\mathcal{C}\times \mathcal{S}$  where $\mathcal{S}$ is the set of all deformation tensors $\varphi$ such that $\im\,(Z_{\oneb}Z_{\oneb} \varphi) = 0$. Marking here refers to the choice of a point in the CR Cartan frame bundle \cite{CapSlovak2009,ChernMoser1974,Tanaka1962} of the given CR structure on $(S^3,H)$; the symmetry group of any marked CR structure is trivial, so working with marked structures eliminates the need to try to mod out by the noncompact symmetry group of the standard CR sphere. For our purposes, we need a modified version of the Cheng-Lee slice theorem which uses the linearly equivalent slice $\mathfrak{D}'_{BE}\oplus \mathfrak{D}_0^{\perp}$. Let $\Gamma(\mathcal{J})^m$ denote the space of marked CR structures on $(S^3,H)$, which we identify with the space $\mathfrak{D}^m$ of marked deformations of $(S^3,H,J_0)$. The contact diffeomorphisms act naturally by pullback on $\Gamma(\mathcal{J})$; a contact diffeomorphism that takes $J_1\in\Gamma(\mathcal{J})$ to $J_2\in\Gamma(\mathcal{J})$ lifts to an equivariant diffeomorphism between the corresponding CR Cartan frame bundles \cite{Cap2008,CapSlovak2009} so that the action of contact diffeomorphisms on the CR structures $\Gamma(\mathcal{J})$ extends naturally to an action on the marked CR structures $\Gamma(\mathcal{J})^m$ (see, e.g., \cite{ChengLee1995}), and hence on $\mathfrak{D}^m$ by identification with $\Gamma(\mathcal{J})^m$.
\begin{theorem}\label{thm:CL-slice-mod}
Fix any marking $y_0$ of the standard CR sphere.  Then
\begin{itemize}
\item[(i)] The natural action $\mathcal{C}\times \mathfrak{D}^m  \to \mathfrak{D}^m$ restricts to a local smooth tame diffeomorphism $P:\mathcal{C}\times (\mathfrak{D}'_{BE} \oplus \mathfrak{D}_0^{\perp}) \times \{y_0\} \to \mathfrak{D}^m$ in a neighborhood of $(0,y_0)\in \mathfrak{D}^m$;
\item[(ii)] For $\Psi\in \mathcal{C}$ sufficiently near the identity, the image of $(\mathfrak{D}'_{BE} \oplus \mathfrak{D}_0^{\perp})\times \{y_0\}$ under $\Psi$ is disjoint from itself unless $\Psi=\mathrm{Id}$.
\end{itemize}
\end{theorem}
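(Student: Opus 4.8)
\medskip

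The plan is to obtain $P$ as a local diffeomorphism from Hamilton's Nash--Moser inverse function theorem, following the strategy of Cheng--Lee \cite{ChengLee1995} but with their slice $\mathcal{S}$ replaced by $\mathfrak{D}'_{BE}\oplus\mathfrak{D}_0^\perp$. Since the natural action $\mathcal{C}\times\mathfrak{D}^m\to\mathfrak{D}^m$ is a smooth tame map of smooth tame Fréchet manifolds and $\mathfrak{D}'_{BE}\oplus\mathfrak{D}_0^\perp$ is a closed tame subspace of $\mathfrak{D}$, the map $P$ is smooth tame, so it will be enough to show that its derivative at the base point $(0,0,y_0)$ is a tame linear isomorphism. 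Invertibility should then persist on a neighborhood, where $DP$ is a small tame perturbation of the isomorphism at the base point, the inverses will form a smooth tame family, and Hamilton's theorem will give that $P$ is a local smooth tame diffeomorphism, which is part (i). Part (ii) should then follow formally: if $\Psi\cdot(\vf_1,y_0)=(\vf_2,y_0)$ for $\vf_1,\vf_2\in\mathfrak{D}'_{BE}\oplus\mathfrak{D}_0^\perp$ near $0$ and $\Psi\in\mathcal{C}$ near the identity, then $P(\Psi,\vf_1)=P(\mathrm{Id},\vf_2)$, so injectivity of $P$ will force $\Psi=\mathrm{Id}$ and $\vf_1=\vf_2$, and hence the $\mathcal{C}$-translates of the slice near $(0,y_0)$ will be mutually disjoint.

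The heart of the argument is therefore to compute and invert $DP$ at the base point. Splitting off the finite-dimensional marking directions, I would write $T_{(0,y_0)}\mathfrak{D}^m\cong\mathfrak{D}\oplus T_{y_0}\mathcal{Y}_0$, where $\mathcal{Y}_0$ is the ($8$-dimensional) CR Cartan bundle of the standard sphere. By \eqref{eq:linearization-cont-diff}, composing $DP_{(0,0,y_0)}$ with the projection onto $\mathfrak{D}$ gives
\[
L\colon C^{\infty}(S^3,\bR)\oplus(\mathfrak{D}'_{BE}\oplus\mathfrak{D}_0^\perp)\longrightarrow\mathfrak{D},\qquad L(g,\vfd)=\vfd+iZ_1Z_1g.
\]
Using that $Z_1$ maps each $H_{p,q}$ isomorphically onto $H_{p-1,q+1}$ for $p\geq1$ and annihilates $H_{0,q}$, one sees that $iZ_1Z_1$ carries $C^{\infty}(S^3,\bR)$ into $\mathfrak{D}$ with an $8$-dimensional kernel --- the contact potentials of the infinitesimal CR automorphisms of the sphere --- and the Burns--Epstein observation recalled in the introduction says precisely that $\mathfrak{D}'_{BE}\oplus\mathfrak{D}_0^\perp$ is a complement to $\im(iZ_1Z_1|_{C^{\infty}(S^3,\bR)})$ in $\mathfrak{D}$. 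Thus $L$ is surjective with kernel $\{0\}\oplus\ker(iZ_1Z_1|_{C^{\infty}(S^3,\bR)})$, of dimension $8$. Since the infinitesimal action of $\aut(S^3,J_0)$ on $\mathcal{Y}_0$ at $y_0$ is an isomorphism onto $T_{y_0}\mathcal{Y}_0$, the map $DP_{(0,0,y_0)}$ will carry this kernel isomorphically onto $T_{y_0}\mathcal{Y}_0$, and a short diagram chase will then show that $DP_{(0,0,y_0)}\colon C^{\infty}(S^3,\bR)\oplus(\mathfrak{D}'_{BE}\oplus\mathfrak{D}_0^\perp)\to\mathfrak{D}\oplus T_{y_0}\mathcal{Y}_0$ is a linear bijection. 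Its inverse is assembled from the $L^2$-orthogonal spherical-harmonic projections together with division by the eigenvalues of $Z_1Z_1$ on the relevant $H_{p,q}$, which are nonzero and bounded below there, so it loses no derivatives and is tame.

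The tame estimates needed to run Hamilton's theorem --- smoothness and tameness of the action, and of the family $DP^{-1}$ near the base point --- are those established by Cheng--Lee \cite{ChengLee1995}, which I would adopt; the genuinely new ingredient is the spherical-harmonic computation just outlined, and I expect its only delicate point to be checking transversality of $\mathfrak{D}'_{BE}\oplus\mathfrak{D}_0^\perp$ to $\im(iZ_1Z_1)$ along the critical diagonal $q=p+4$. Away from that diagonal, and on the diagonals $q=0,1$ absorbed by $\mathfrak{D}_0^\perp$, complementarity is immediate from the mapping properties of $Z_1$; along $q=p+4$ it comes down to the statement in the introduction that $\mathfrak{D}'_{BE}$ consists of the $\vf$ that are $L^2$-orthogonal, with respect to the real part of the complex inner product, to $i(Z_1)^2$ applied to real $S^1$-invariant functions --- equivalently, to the orthogonal complement of the image of $iZ_1Z_1$ on that diagonal --- and the content there is matching these two real spaces of the same (finite) dimension. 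This bookkeeping, rather than the inherited Nash--Moser machinery, is where I expect the main obstacle to lie.
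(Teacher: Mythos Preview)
Your overall strategy matches the paper's: apply Nash--Moser with the new slice $\mathfrak{D}'_{BE}\oplus\mathfrak{D}_0^\perp$, verify that the linearized map is a tame isomorphism, and read off (ii) from local injectivity. Your spherical-harmonic analysis of $DP$ at the base point $(\mathrm{id},0,y_0)$ is correct and coincides with what the paper uses there.

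The gap is in how you treat the Nash--Moser hypotheses away from the base point. You write that ``invertibility should then persist on a neighborhood, where $DP$ is a small tame perturbation of the isomorphism at the base point''; but this is exactly what fails in the tame Fr\'echet category---small perturbations of tame isomorphisms need not be invertible, and Hamilton's theorem requires you to \emph{exhibit} a smooth tame family of inverses at all nearby points, not deduce their existence from invertibility at one point. You later say you would adopt these estimates from Cheng--Lee, but their construction is for a different slice $\mathcal{S}$, and the adaptation is where the work lies. The paper does this explicitly: it uses $\mathcal{C}$-equivariance to reduce to linearizing at $(\mathrm{id},\varphi)$ for arbitrary small $\varphi$, where $DP_1$ acquires the perturbation term $L_\varphi$ of \eqref{eq:L}; it then decomposes $\mathfrak{D}=\mathfrak{D}'_{BE}\oplus (Z_1)^2(C^\infty(S^3,\bR))\oplus\mathfrak{D}_0^\perp$ with an oblique (but $H^s_{FS}$-bounded) projection $\Pi$ onto the middle summand, and invokes the Burns--Epstein elliptic regularity argument (as in the proof of Proposition~\ref{prop:convergence-ft}) to show that $(\nabla_1)^2+\Pi L_\varphi:C^\infty_\perp(S^3,\bR)\to (Z_1)^2(C^\infty_\perp(S^3,\bR))$ has a smooth tame inverse for $\varphi$ small in $L^\infty$. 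From this the full family $VP(0,\varphi)$ is assembled by back-substitution, with the $8$-dimensional marking equation handled last by the ordinary inverse function theorem. This pseudodifferential step---not the base-point spherical-harmonic bookkeeping you highlight---is the substantive analytic content, and your proposal does not provide it.
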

The proof of this modified Cheng-Lee slice theorem can be obtained by adapting the proof of Theorem B in \cite{ChengLee1995}. For the reader's convenience we provide a slightly simplified proof of this theorem in Section \ref{sec:proofs-slice-thms}. The advantage of this modified slice theorem is that a linear subspace of the slice gives a slice for the embeddable deformations. Let $\mathfrak{D}^m_{emb}$ denote the space of marked embeddable deformations of the standard CR sphere. We shall prove the following slice theorem for the set of embeddable deformations, which also immediately implies Theorem \ref{thm:Frechet}.
\begin{theorem}\label{thm:slice}
Fix any marking $y_0$ of the standard CR sphere. Then
\begin{itemize}
\item[(i)] The natural action $\mathcal{C}\times \mathfrak{D}^m \to \mathfrak{D}^m$ restricts to a local smooth tame immersion $P_{emb}:\mathcal{C}\times \mathfrak{D}'_{BE} \times \{y_0\} \to \mathfrak{D}^m$ in a neighborhood of $(\mathrm{id},0)\in \mathcal{C}\times \mathfrak{D}'_{BE}$ whose image is a neighborhood of $(0,y_0)$ in $\mathfrak{D}^m_{emb}$;
\item[(ii)] For $\Psi\in \mathcal{C}$ sufficiently near the identity, the image of $\mathfrak{D}'_{BE}\times \{y_0\}$ under $\Psi$ is disjoint from itself unless $\Psi=\mathrm{Id}$.
\end{itemize}
\end{theorem}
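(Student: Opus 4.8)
The plan is to deduce Theorem~\ref{thm:slice} --- which contains Theorem~\ref{thm:Frechet}, obtained by passing to the quotient by $\mathcal{C}$ --- from the modified Cheng--Lee slice theorem (Theorem~\ref{thm:CL-slice-mod}), the embeddability of small Burns--Epstein deformations, Lempert's stability theory, and the canonical family of embeddable deformations and embeddings introduced in this paper. I would dispose of the elementary parts first. The map $P_{emb}$ is the restriction of the smooth tame $\mathcal{C}$-action $\mathcal{C}\times\mathfrak{D}^m\to\mathfrak{D}^m$ to the closed, tamely split subspace $\mathcal{C}\times\mathfrak{D}'_{BE}\times\{y_0\}$, hence is smooth and tame; by \eqref{eq:linearization-cont-diff} the deformation-tensor component of its differential at $(\mathrm{id},0,y_0)$ is $(g,\vfd_{11})\mapsto\vfd_{11}+iZ_1Z_1g$, whose vanishing with $\vfd_{11}\in\mathfrak{D}'_{BE}$ forces $\vfd_{11}=0$ and $Z_1Z_1g=0$, since $\mathfrak{D}=(\text{gauge})\oplus\mathfrak{D}'_{BE}\oplus\mathfrak{D}_0^\perp$ is the linearization of the slice decomposition; combined with the vanishing of the marking component this means the contact flow of $g$ fixes the marked standard structure, so $g=0$ by freeness of the $\mathcal{C}$-action on marked structures, and the Nash--Moser inverse function theorem then makes $P_{emb}$ a smooth tame immersion near $(\mathrm{id},0,y_0)$. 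Assertion (ii) is immediate from Theorem~\ref{thm:CL-slice-mod}(ii) since $\mathfrak{D}'_{BE}\subseteq\mathfrak{D}'_{BE}\oplus\mathfrak{D}_0^\perp$, and $P_{emb}$ maps into $\mathfrak{D}^m_{emb}$ because for $(\Psi,\psi,y_0)$ near the base point $\psi\in\mathfrak{D}'_{BE}\subseteq\mathfrak{D}_{BE}$ is small in $C^4$, so $J_\psi$ is embeddable by Burns--Epstein and $\Psi\cdot J_\psi$, being CR equivalent to it, is embeddable as well.

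It remains to show that the image of $P_{emb}$ contains a neighborhood of $(0,y_0)$ in $\mathfrak{D}^m_{emb}$. Since embeddability is a CR invariant, $\mathfrak{D}^m_{emb}$ is $\mathcal{C}$-invariant, so under the local diffeomorphism $P$ of Theorem~\ref{thm:CL-slice-mod} it corresponds to $\mathcal{C}\times E\times\{y_0\}$, where $E\subseteq\mathfrak{D}'_{BE}\oplus\mathfrak{D}_0^\perp$ is the set of embeddable deformations lying in the slice, while the image of $P_{emb}$ corresponds to $\mathcal{C}\times\mathfrak{D}'_{BE}\times\{y_0\}$; thus the claim is equivalent to $E=\mathfrak{D}'_{BE}$ near $0$. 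The inclusion $\mathfrak{D}'_{BE}\subseteq E$ is Burns--Epstein once more. Moreover $T_0E\subseteq\mathfrak{D}_0\cap(\mathfrak{D}'_{BE}\oplus\mathfrak{D}_0^\perp)=\mathfrak{D}'_{BE}$: a curve in $\mathfrak{D}_{emb}$ issuing from $0$ can, by Lempert's stability theory, be followed by a smooth family of embeddings in $\bC^2$, so the identity $\vfd_{11}=Z_1Z_1f$ of \eqref{eq:inf-emb-eqn} (the torsion vanishes on the sphere) puts its velocity in the range $\mathfrak{D}_0$ of $Z_1Z_1$, and intersecting with the slice gives $\mathfrak{D}'_{BE}$; together with $\mathfrak{D}'_{BE}\subseteq E$ this yields $T_0E=\mathfrak{D}'_{BE}$. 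Hence, once we know that $E$ is a tame submanifold near $0$, it must coincide with the linear subspace $\mathfrak{D}'_{BE}$ that it contains and has as tangent space, completing the proof.

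To see that $E$ is a submanifold I would use the canonical family: a smooth tame map $\Theta:U\to\mathfrak{D}_{emb}$ on a neighborhood $U$ of $0$ in the space $\mathfrak{D}_0$ of infinitesimally embeddable deformations, with associated CR embeddings $\Phi_{\vfd}:S^3\hookrightarrow\bC^2$, such that $D\Theta|_0$ is the inclusion $\mathfrak{D}_0\hookrightarrow\mathfrak{D}$ and such that every embeddable CR structure near the standard one --- taken together with its (essentially unique) embedding close to the inclusion --- arises from the construction. Decomposing $\Theta(\vfd)=\Psi_0(\vfd)\cdot s(\vfd)$ with $\Psi_0(\vfd)\in\mathcal{C}$ and $s(\vfd)$ in the slice by Theorem~\ref{thm:CL-slice-mod}(i) gives a smooth tame map $s:U\to E$ that, by the last property and the uniqueness in Theorem~\ref{thm:CL-slice-mod}(i), is onto a neighborhood of $0$ in $E$. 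Differentiating at $0$ and using $D\Theta|_0=\mathrm{id}$, \eqref{eq:linearization-cont-diff}, and the identity $\mathfrak{D}_0=(\text{gauge})\oplus\mathfrak{D}'_{BE}$ (both summands lie in $\mathfrak{D}_0$) shows $Ds|_0$ is the projection $\mathfrak{D}_0\to\mathfrak{D}'_{BE}$ along the gauge directions, surjective with complemented kernel; so by the Nash--Moser implicit function (constant rank) theorem $E=s(U)$ is a tame submanifold near $0$ with tangent space $\mathfrak{D}'_{BE}$, and by the previous paragraph $E=\mathfrak{D}'_{BE}$ near $0$. This proves Theorem~\ref{thm:slice}, and Theorem~\ref{thm:Frechet} follows by taking the quotient by $\mathcal{C}$.

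I expect the main obstacle to be the construction of the canonical family $\Theta$ with its embeddings $\Phi_{\vfd}$ and the two properties used above --- that $D\Theta|_0$ is the inclusion and that the construction captures every nearby embeddable structure. This requires solving the CR-function (embedding) equations for $J_{\Theta(\vfd)}$ with a canonically normalized choice of gauge, establishing tame estimates for the associated $\bar\partial_b$-type solution operators on $S^3$, and reversing the construction starting from an embedding close to the inclusion; granting this, the surjectivity of $s$ used above follows by a Nash--Moser implicit function argument in the tame Frechet category, and the remaining steps are as indicated.
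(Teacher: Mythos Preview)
Your reduction of the theorem to the equality $E=\mathfrak{D}'_{BE}$ near $0$ (where $E$ is the set of embeddable deformations lying in the slice $\mathfrak{D}'_{BE}\oplus\mathfrak{D}_0^{\perp}$) is correct, and the immersion/injectivity/part~(ii) arguments are fine and match the paper. The gap is exactly the one you flag in your final paragraph: you assume that the canonical family $\Theta$ (together with its embeddings) ``captures every nearby embeddable structure,'' i.e.\ that every embeddable deformation near $0$ lies in the $\mathcal{C}$-orbit of some $\Theta(\vfd)$. This is precisely the surjectivity of $P_{emb}$ you are trying to prove, so the argument is circular unless you supply an independent reason. The construction of $\Theta$ in Theorems~\ref{thm:vft} and~\ref{thm:smooth-soln} only shows that $\Theta$ produces \emph{some} embeddable family with prescribed linear part; nothing in that construction implies that an arbitrary embeddable $\vf$ comes from the flow of a tangency-equation potential, and ``reversing the construction starting from an embedding'' does not obviously work, since the tangency equation singles out a very particular evolution (in the $\re f_t$-signed normal direction, with $\psi(t)\in\mathfrak{D}_0^{\perp}$) among the many contact parametrizations of a given family of hypersurfaces.

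The paper closes this gap by a completely different, complex-analytic route: given a small embeddable $\vf$, one realizes it as a strictly convex hypersurface $M\subset\mathbb{C}^2$, takes Lempert's circular representation $\Psi:B\to\Omega$ from the Kobayashi indicatrix $B$ onto the domain $\Omega$ bounded by $M$, and uses the Bland--Duchamp positivity of Fourier coefficients of $\overline{\vf_{M,\partial B}}$ to show that, after an $S^1$-equivariant contact diffeomorphism normalizing $\partial B$ into $\mathfrak{D}'_{\mathrm{cd}}$ (Lemma~\ref{lem:CL-slice-mod-S1}), the resulting deformation tensor on $S^3$ lies in $\mathfrak{D}'_{BE}$. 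This directly exhibits the required contact diffeomorphism taking $\vf$ into $\mathfrak{D}'_{BE}$, bypassing any need to know in advance that $E$ is a submanifold. Your Nash--Moser strategy would become viable only if you could independently establish the surjectivity of $\Theta$ modulo $\mathcal{C}$; absent that, the paper's circular-representation argument (Proposition~\ref{prop:emb-slice}) is the missing ingredient.
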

We observe that by Theorem \ref{thm:CL-slice-mod}, Theorem \ref{thm:slice} is equivalent to the statement that $(\mathfrak{D}'_{BE} \oplus \mathfrak{D}_0^{\perp} ) \setminus \mathfrak{D}'_{BE}$ consists solely of nonembeddable deformations (near the origin). This question was considered in \cite{BurnsEpstein1990b} where it was shown that the nonembeddable deformations form a $G_{\delta}$-set in $(\mathfrak{D}'_{BE} \oplus \mathfrak{D}_0^{\perp} ) \setminus \mathfrak{D}'_{BE}$; the results in \cite{Epstein1998} imply that this $G_{\delta}$-set is open. Theorem \ref{thm:slice} settles the question completely; a sufficiently small $\vf \in \mathfrak{D}'_{BE} \oplus \mathfrak{D}_0^{\perp}$ is embeddable if and only if $\vf \in \mathfrak{D}'_{BE}$.

Another consequence of Theorem \ref{thm:slice} is a normal form for embeddable CR structures, unique up to an action of $\mathrm{Aut}(S^3)$ on $\mathfrak{D}'_{BE}$.
\begin{corollary}\label{cor:BE-normalization}
For sufficiently small deformations $\varphi$ of the standard CR sphere, $\varphi$ is an embeddable deformation if and only if there exists a smooth contact diffeomorphism such that the pulled back CR structure corresponds to a deformation $\tilde{\vf} \in \mathfrak{D}'_{BE}$.
\end{corollary}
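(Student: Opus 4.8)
The plan is to obtain Corollary~\ref{cor:BE-normalization} directly from Theorem~\ref{thm:slice} by passing from marked to unmarked CR structures. Recall the projection $\pi\colon\mathfrak{D}^m\to\mathfrak{D}$ forgetting the marking; it is a locally trivial bundle (with fibre the CR Cartan frame bundle), hence an open map, and $\mathfrak{D}^m_{emb}=\pi^{-1}(\mathfrak{D}_{emb})$. Recall also that the map $P_{emb}$ in Theorem~\ref{thm:slice}(i) is, by construction, the restriction of the action $\mathcal{C}\times\mathfrak{D}^m\to\mathfrak{D}^m$, so that $P_{emb}(\Psi,\tilde{\vf},y_0)=\Psi\cdot(\tilde{\vf},y_0)=(\Psi_{*}\tilde{\vf},\Psi_{*}y_0)$, where $\Psi_{*}\tilde{\vf}$ denotes the deformation tensor of the CR structure obtained from $J_{\tilde{\vf}}$ by the contact diffeomorphism $\Psi$ and $\Psi_{*}y_0$ the transported marking. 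In particular $\pi\bigl(P_{emb}(\Psi,\tilde{\vf},y_0)\bigr)=\Psi_{*}\tilde{\vf}$, i.e.\ the underlying CR structures $J_{\vf}$ and $J_{\tilde{\vf}}$ differ by the contact diffeomorphism $\Psi$ (the precise pull-back versus push-forward convention being immaterial, since $\Psi^{-1}$ is again a contact diffeomorphism).

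For the ``only if'' direction, let $\vf$ be a sufficiently small embeddable deformation. Since the image of $P_{emb}$ is a neighbourhood of $(0,y_0)$ in $\mathfrak{D}^m_{emb}$ and $\pi$ is open, its image under $\pi$ is a neighbourhood of $0$ in $\mathfrak{D}_{emb}$; shrinking the smallness threshold we may assume $\vf$ lies in it. Choose a marking $y$ with $(\vf,y)$ in the image of $P_{emb}$ and write $(\vf,y)=P_{emb}(\Psi,\tilde{\vf},y_0)$ with $(\Psi,\tilde{\vf})$ near $(\mathrm{id},0)$; projecting by $\pi$ gives $\vf=\Psi_{*}\tilde{\vf}$ with $\tilde{\vf}\in\mathfrak{D}'_{BE}$, so a suitable contact diffeomorphism (namely $\Psi^{-1}$) takes $J_{\vf}$ to the CR structure of $\tilde{\vf}\in\mathfrak{D}'_{BE}$. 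Conversely, if $\Phi$ is a contact diffeomorphism of $(S^3,H)$ carrying $J_{\vf}$ to $J_{\tilde{\vf}}$ with $\tilde{\vf}\in\mathfrak{D}'_{BE}$, then $\Phi$ is a CR isomorphism between $(S^3,H,J_{\vf})$ and $(S^3,H,J_{\tilde{\vf}})$, so the two are simultaneously embeddable or not; since $\tilde{\vf}\in\mathfrak{D}'_{BE}\subseteq\mathfrak{D}_{BE}$ gives an embeddable CR structure (by the Burns--Epstein theorem when $\tilde{\vf}$ is small, which we may assume in a normal-form statement, or equivalently because $P_{emb}(\mathrm{id},\tilde{\vf},y_0)\in\mathfrak{D}^m_{emb}$), it follows that $\vf$ is embeddable. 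This proves the equivalence.

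For the uniqueness of the normal form up to $\mathrm{Aut}(S^3)$ mentioned before the corollary, combine Theorem~\ref{thm:slice}(ii) with the fact that $\mathrm{Aut}(S^3)$ acts simply transitively on the markings of the standard CR sphere: if $\Psi_1\cdot(\tilde{\vf}_1,y_0)=\Psi_2\cdot(\tilde{\vf}_2,y_0)$ with $\tilde{\vf}_i\in\mathfrak{D}'_{BE}$, then the image of $\mathfrak{D}'_{BE}\times\{y_0\}$ under $\Psi_1^{-1}\Psi_2$ meets $\mathfrak{D}'_{BE}\times\{y_0\}$, so by (ii) $\Psi_1=\Psi_2$ and $\tilde{\vf}_1=\tilde{\vf}_2$; hence the marked normal form is unique and the only freedom in the unmarked statement is the choice of marking of $J_0$, i.e.\ an element of $\mathrm{Aut}(S^3)$. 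The point to emphasise is that no new analysis is needed --- the immersion property, the identification of the image with the embeddable locus, and the self-disjointness in (ii) are all already contained in Theorem~\ref{thm:slice} --- and the only thing requiring care is the marking bookkeeping, namely using openness of $\pi$ to lift a small unmarked embeddable $\vf$ into the image of $P_{emb}$ and keeping the $\mathcal{C}$-action conventions straight.
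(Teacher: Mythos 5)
Your proof is correct and takes essentially the approach the paper intends (the paper gives no explicit proof, presenting the corollary as a direct consequence of Theorem~\ref{thm:slice}): project the marked slice statement through the forgetful map $\pi\colon\mathfrak{D}^m\to\mathfrak{D}$, using its openness and $\mathfrak{D}^m_{emb}=\pi^{-1}(\mathfrak{D}_{emb})$ for the ``only if'' direction, and Burns--Epstein embeddability of small elements of $\mathfrak{D}'_{BE}$ for the converse. One point worth making explicit rather than leaving in a parenthetical: as stated the ``if'' direction genuinely requires $\tilde{\vf}$ (equivalently, the normalizing contact diffeomorphism) to be small as well --- embeddability of an \emph{arbitrary} $\tilde{\vf}\in\mathfrak{D}'_{BE}$ is not known and is not what Theorem~\ref{thm:slice} gives --- so the corollary should be read with the understanding, implicit in the notion of a normal form, that both the deformation and the normalizing diffeomorphism lie in the small neighborhoods produced by the slice theorem; with that reading your argument is complete.
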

Note that $\mathfrak{D}'_{BE}$ can be replaced by $\mathfrak{D}_{BE}$ in Corollary \ref{cor:BE-normalization} (at the expense of leaving also the freedom to act by an $S^1$-equivariant contact diffeomorphism on $\mathfrak{D}_{BE}$). Such a result was only previously known in finite regularity, with the notion of ``sufficiently small'' depending on the regularity; see the work of Bland and Bland-Duchamp \cite{Bland1994, BlandDuchamp2011, BlandDuchamp2014}.

The above characterization of embeddable deformations is satisfying, but it does not really give a practical means of checking for embeddability since one must first normalize the deformation tensor by an appropriate contact diffeomorphism. We would like to say something about the embeddability of a deformation without the need to first normalize it. At the linearized (i.e.\ infinitesimal) level this is clear, as explained above. To what extent does a similar characterization of embeddability hold beyond the linear level? By taking a completely different approach to the problem using geometric flows we provide the following result describing embeddable structures without the need to normalize by contact diffeomorphisms.
\begin{theorem}\label{thm:vft}
For $\vfd \in \mathfrak{D}_0$ sufficiently small there exists a smooth family $\vf(t)\in \mathfrak{D}_{emb}$ such that $\vf(t) = t\vfd + \mu(t)$ for $t\in[0,1]$, where $\mu(t)=O(t^2)$ and $\mu(t)\in \mathfrak{D}_0^{\perp}$. Moreover, there exists a smooth family of embeddings $\Phi_t:S^3\to\mathbb{C}^2$, with $\Phi_0=\mathrm{Id}$, realizing the deformation $\varphi(t)$ for each $t\in [0,1]$.
\end{theorem}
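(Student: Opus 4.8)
The strategy is to realize $\varphi(t)$ not abstractly but as the deformation tensor of a genuine one-parameter family of strictly pseudoconvex hypersurfaces $M_t = \Phi_t(S^3)\subseteq\mathbb{C}^2$ obtained by flowing $M_0 = S^3$ with a velocity field recomputed at each instant from $\vfd$. Since each $M_t$ is an actual embedded hypersurface, $\varphi(t)\in\mathfrak{D}_{emb}$ comes for free, and the family of embeddings $\Phi_t$ is produced simultaneously with the family $\varphi(t)$. The content of the theorem is then to arrange the flow so that $\varphi(t) = t\vfd + \psi(t)$ with $\psi(t)\in\mathfrak{D}_0^{\perp}$, and to show that the flow survives on all of $[0,1]$ once $\vfd$ is small.

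Here is how I would set it up. Since $\mathfrak{D}_0$ is precisely the range of $Z_1Z_1$ acting on functions and $\vfd\in\mathfrak{D}_0$, first solve $Z_1Z_1 f_0 = \vfd$ on $S^3$, choosing the unique solution $f_0$ orthogonal to $\ker(Z_1Z_1)$ (equivalently, $f_0$ has only spherical-harmonic components $H_{p,q}$ with $p\geq 2$). More generally, for a strictly pseudoconvex $M = \Phi(S^3)\subseteq\mathbb{C}^2$ near $S^3$, with $\Phi$ a contact embedding near the inclusion, write $L_M := \nabla_1\nabla_1 + iA_{11}$ for the corresponding operator of $M$ (relative to the contact form $i\partial\rho|_M$), transported to $S^3$ via $\Phi$; this is a small perturbation of $Z_1Z_1$, so, with $\pi_0$ the $L^2$-orthogonal projection onto $\mathfrak{D}_0$, the map $f\mapsto \pi_0 L_M f$ restricted to $(\ker Z_1Z_1)^{\perp}$ remains a tame isomorphism onto $\mathfrak{D}_0$ for $M$ in a fixed neighbourhood of $S^3$. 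Let $f_M$ be the solution of $\pi_0 L_M f_M = \vfd$ with $f_M\perp\ker(Z_1Z_1)$. The flow is
\[
\frac{d}{dt}\Phi_t = V(\Phi_t),\qquad \Phi_0 = \mathrm{Id},
\]
where $V(\Phi)$ is the deformation field of $M = \Phi(S^3)$ chosen so that the function $f$ appearing in \eqref{eq:inf-emb-eqn} for the pair $(M,V)$ is exactly $f_M$: its normal part is fixed by $\re f = \re f_M$ (via $\re f = -\dot\rho$), and its tangential part — the infinitesimal contact reparametrization — is then fixed by $\im f = \im f_M$, which is possible because changing the reparametrization adjusts $\im f$ by an arbitrary real function. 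With this choice, \eqref{eq:inf-emb-eqn} gives $\frac{d}{dt}\varphi(t) = L_{M_t} f_{M_t}$ along the flow, hence $\pi_0\big(\tfrac{d}{dt}\varphi(t)\big) = \vfd$ for all $t$.

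Granting existence of the flow, the claimed form of $\varphi(t)$ follows by integration. From $\varphi(0) = 0$ and $\pi_0\big(\tfrac{d}{dt}\varphi(t)\big) = \vfd$ we get $\pi_0\varphi(t) = t\vfd$, so $\psi(t) := \varphi(t) - t\vfd = (\mathrm{id}-\pi_0)\varphi(t)\in\mathfrak{D}_0^{\perp}$; and since $\psi(0) = 0$ while $\dot\psi(0) = (\mathrm{id}-\pi_0)\big(\tfrac{d}{dt}\varphi(0)\big) = (\mathrm{id}-\pi_0)(Z_1Z_1 f_0) = 0$ (because $Z_1Z_1 f_0 = \vfd\in\mathfrak{D}_0$), we obtain $\psi(t) = O(t^2)$. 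Each $\varphi(t)$ is embeddable, being realized by the embedding $\Phi_t$ of $S^3$ onto $M_t\subseteq\mathbb{C}^2$, with $\Phi_0 = \mathrm{Id}$. (The bare existence of a family $\varphi(t)$ with these properties also follows from Theorem~\ref{thm:Frechet}: near $0$, $\mathfrak{D}_{emb}$ is a graph over $\mathfrak{D}_0$ with graphing function $\chi = O(\|\cdot\|^2)$, so $\varphi(t) = t\vfd + \chi(t\vfd)$ works; but that route does not directly supply the embeddings $\Phi_t$, which is why the flow construction is the natural one here and why it connects with the canonical embeddings of the paper.)

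The main obstacle is the construction and long-time existence of the flow in the tame Fréchet category. One must: prove that $\pi_0 L_M$ has the asserted tame inverse on $(\ker Z_1Z_1)^{\perp}$ uniformly for $M$ near $S^3$, which requires controlling $\nabla_1\nabla_1 + iA_{11}$ as a (derivative-losing, hence genuinely tame rather than Banach) perturbation of $Z_1Z_1$, in particular that its kernel and cokernel stay close to those of $Z_1Z_1$; verify that $V$ depends tamely and smoothly on $\Phi$, so that ODE theory in the tame category applies (via Nash–Moser in the spirit of \cite{Hamilton1982}, or by solving in a Sobolev scale and propagating smoothness); and — the most delicate point — obtain a priori estimates, using the smallness of $\vfd$ together with $\psi(t) = O(t^2)$, keeping $M_t$ strictly pseudoconvex and graphical over $S^3$ for all $t\in[0,1]$ so the solution does not leave the neighbourhood on which $\pi_0 L_{M_t}$ is invertible. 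Tracking the contact reparametrization (so that $\Phi_t$ is genuinely a contact embedding and \eqref{eq:inf-emb-eqn} applies verbatim along the flow) and the $t$-dependent index-lowering by the Levi form are further bookkeeping tasks, absorbed into the prescription of $V$ and of $f_{M_t}$.
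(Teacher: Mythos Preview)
Your strategy is close in spirit to the paper's---construct $\varphi(t)$ and $\Phi_t$ together by flowing $S^3$ with a velocity determined at each instant by a potential $f_t$ solving the linearized embedding equation---but the order is reversed, and in doing so you have lost the mechanism that actually makes the analysis go through.

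The paper \emph{decouples} the two steps. First it solves the abstract tangency equation \eqref{Nabla2ft} for the pair $(f_t,\varphi(t))$ on $S^3$, with no reference to any embedding (Theorem~\ref{thm:smooth-soln}); only afterwards does it produce $\Phi_t$ from $(f_t,\varphi(t))$ via a transport equation (Theorem~\ref{P:embed}), which is then a genuine first-order ODE solved by characteristics with no derivative loss. The hard analytic work is entirely in the first step, and the key device there---which you do not have---is to write $f_t=\lambda+\tilde f_t$ for a nonzero real constant $\lambda$. This constant does two things at once. First, since $L_\varphi\lambda=-i\lambda\nabla_0\varphi$, projecting the tangency equation onto $\mathfrak{D}_0^{\perp}$ gives
\[
\Big(\tfrac{d}{dt}+i\lambda\nabla_0\Big)\mathcal P_2\varphi \;=\; \mathcal P_2(L_\varphi\tilde f_t),
\]
and on $\mathfrak{D}_0^{\perp}$ the operator $i\nabla_0=iT-4$ behaves like minus the sublaplacian (Lemma~\ref{lem:energy-estimate1}); for $\lambda<0$ this is genuinely parabolic, and that smoothing is exactly what makes the Nash--Moser linearization invertible with tame inverse (Lemma~\ref{lem:perturbed-Reeb-heat-equation}). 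Second, $\lambda\neq 0$ forces $\re f_t$ to have a strict sign for small $\vfd$, which is the hypothesis needed in Theorem~\ref{P:embed} to pass from $(f_t,\varphi(t))$ to an actual family of embeddings.

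Your choice $f_M\perp\ker(Z_1Z_1)$ explicitly kills the constant component of $f$, so you lose both effects: $\re f_M$ need not have a sign, and more seriously your coupled ODE $\dot\Phi_t=V(\Phi_t)$ has a vector field that loses derivatives (through $\Phi_*$, through $\nabla^1 f_M$, and through the dependence of the coefficients of $L_M$ on $\varphi(\Phi)$) with no parabolic or other smoothing term to compensate. Invoking ``ODE theory in the tame category via Nash--Moser'' does not resolve this: Nash--Moser requires the linearized operators to have tame inverses, and the linearization of your flow is itself a derivative-losing linear evolution with no dissipation. The a~priori estimates you mention control the size of the solution but do not address regularity. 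The paper's decoupling plus the constant $\lambda$ is not incidental---it is the substance of the proof.
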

\begin{remark}
We make two remarks.
\begin{itemize}
\item[(1)] In fact, it follows from the more detailed version of the theorem, Theorem \ref{thm:smooth-soln}, that the family $\vf(t)$ is canonical up to the choice of a constant $\lambda$ that determines the initial velocity of the family of embeddings $\Phi_t$ (setting $\lambda=-1$ ensures that the family of surfaces $\Phi_t(S^3)$ move outward and locally foliate $\mathbb{C}^2$); the resulting time one map $\mathfrak{D}_0 \to \mathfrak{D}_{emb}$ taking $\vfd$ to $\vf(1)$ has a linearization at $\vfd=0$ which is the identity and hence can be thought of as an exponential map.
\item[(2)] Note that, in terms of spherical harmonics, the condition $\psi\in  \mathfrak{D}_0^{\perp}$ means that $\psi_{p,q}=0$ except possibly when $q=0,1$. Also, note that $\psi(t)$ will not be zero in general (as can be seen by an inspection of the proof of Proposition \ref{prop:formalSol} for the special case where, say, $\vfd \in H_{p,2}$).
\end{itemize}
\end{remark}
In the special case where $\vfd \in \mathfrak{D}_{BE}$ we naturally find that $\vf(t)=t\vfd$ (i.e. $\psi(t)=0$) and we obtain analyticity of $\Phi_t$ in $t$.
More precisely:
\begin{theorem}\label{thm:BE-parametric}
For $\vfd \in \mathfrak{D}_{BE}$ sufficiently small there exists a family $\Phi_{\tau}:S^3\to\mathbb{C}^2$ with complex parameter $\tau$, such that for each $\tau$, $|\tau|<2$,
\begin{itemize}
\item[(i)]  $\Phi_{\tau}$ is a smooth embedding which realizes the deformation $\vf(\tau)=\tau\vfd$;
\item [(ii)] $\Phi_{\tau}$ is analytic in $\tau$ as a function with values in the Banach space of $C^k$ maps $S^3\to\mathbb{C}^2$ for any $k$.
\end{itemize}
\end{theorem}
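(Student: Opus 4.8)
The plan is to read off Theorem~\ref{thm:BE-parametric} from the parametric construction behind Theorem~\ref{thm:vft} --- namely Theorem~\ref{thm:smooth-soln} --- specialized to $\mathfrak{D}_{BE}$, where the construction collapses to one that is linear in the deformation parameter, and then to complexify that parameter. First I would run the construction of Theorem~\ref{thm:smooth-soln} with initial datum $\vfd\in\mathfrak{D}_{BE}\subseteq\mathfrak{D}_0$ and verify that the correction $\psi(t)\in\mathfrak{D}_0^{\perp}$ vanishes identically. The role of $\psi(t)$, which is supported on the spherical-harmonic spaces $H_{p,q}$ with $q\in\{0,1\}$, is precisely to cancel the $q\in\{0,1\}$ contributions that the nonlinear iteration generates at each order (cf.\ the formal solution of Proposition~\ref{prop:formalSol}); but every bilinear or polynomial expression and every application of $Z_1$, $Z_{\oneb}$, or the homotopy operators occurring in that iteration keeps one strictly inside the region $q\geq 2$ once one starts from $\mathfrak{D}_{BE}$ --- a product of tensors with $q-p\geq 4$ again has $q-p\geq 4$, while $Z_1$ and $Z_{\oneb}$ shift $q-p$ only by $\pm 2$ --- so no $q\in\{0,1\}$ term is ever produced and there is nothing to correct. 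Hence $\psi(t)\equiv 0$, $\vf(t)=t\vfd$ for all $t$, and $\vf(t)\in\mathfrak{D}_{BE}\cap\mathfrak{D}_{emb}$.

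With $\psi\equiv 0$ the datum $\vf(t)=t\vfd$ is purely linear in $t$, so no non-holomorphic-in-$t$ ingredient survives in the problem of producing the embeddings. Either I track the embeddings already furnished by Theorem~\ref{thm:smooth-soln} and observe that, with $\psi\equiv 0$, the equation they satisfy has coefficients that are affine in $t$; or, equivalently, I rebuild them via Lempert's extremal-disc theory and Bland's circular representation \cite{Lempert1992,Lempert1994,Bland1994,BlandDuchamp1991,BlandDuchamp2011,BlandDuchamp2014}, under which, since $\vfd\in\mathfrak{D}_{BE}$, for each $t$ the CR structure $\vf(t)=t\vfd$ bounds a \emph{circular} integrable complex structure on $\bar B^2$ whose interior deformation tensor $\Phi(t)$ solves an integrability ($\bar\partial$-type) problem on $\bar B^2$ with data the boundary tensor $t\vfd$, hence depending holomorphically --- in fact linearly --- on $t$. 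In either description the time $t$ may be complexified to $\tau\in\bC$ and the governing equation solved by a holomorphic implicit function / fixed-point argument (its linearization being the invertible operator that already controls Theorem~\ref{thm:smooth-soln}, the nonlinearity real-analytic), producing holomorphic coordinates and an embedding $\Phi_\tau$ of the resulting strictly pseudoconvex domain, all holomorphic in $\tau$ with values in $C^k$. Restricting to $S^3=\partial B^2$ gives the asserted family $\Phi_\tau$, analytic in $\tau$ with values in the Banach space of $C^k$ maps $S^3\to\bC^2$, realizing $\vf(\tau)=\tau\vfd$ by construction.

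The main obstacle is to carry out this last step \emph{over the entire disc $|\tau|<2$} rather than merely over a small disc: this requires holomorphic dependence of the fixed point on $\tau$ together with a priori estimates that are uniform in $\tau$ for $|\tau|<2$ and that also keep $\Phi_\tau$ a smooth embedding there. Here the smallness of $\vfd$ is used --- the radius $2$ being forced by the constraint $\norm\tau\vfd\norm_{\infty}<1$ under the normalization $\norm\vfd\norm_{\infty}<1/2$ --- together with the special structure of $\mathfrak{D}_{BE}$ (equivalently the $\zeta$-holomorphy of Bland's circular data), which is what makes the admissible set of parameters a genuine disc and the estimates uniform on it. By contrast, the vanishing $\psi\equiv 0$ of the first step is, once the precise form of the iteration in Theorem~\ref{thm:smooth-soln} is available, a routine bookkeeping of spherical-harmonic supports.
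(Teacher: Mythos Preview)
Your first paragraph --- showing that for $\vfd\in\mathfrak{D}_{BE}$ the correction $\psi(t)$ vanishes --- is essentially correct and matches what the paper does (this is the first assertion of Proposition~\ref{prop:convergence-ft}, proved via Lemmas~\ref{lem:BE-phi-f} and~\ref{lem:BE-induction}). The subsequent steps, however, contain a genuine gap.

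The paper does \emph{not} deduce Theorem~\ref{thm:BE-parametric} from Theorem~\ref{thm:smooth-soln}. The latter is proved by Nash--Moser together with a parabolic-type argument for $\mathcal{P}_2\varphi$; it yields only $C^{\infty}$ dependence of $f_t$ on $t\in[0,T]$, and a $C^{\infty}$ family cannot simply be ``complexified''. Your sentence ``the equation they satisfy has coefficients that are affine in $t$'' is also not accurate: the tangency equation $(\nabla_1)^2 f_t + L_{t\vfd}f_t = \vfd$ has coefficients \emph{quadratic} in $t$ (there is a $(\vf_{1}{}^{\oneb})^2(\nabla_{\oneb})^2$ term in $L_\vf$), and the transport equation for the embedding $\psi_t$ has coefficients that involve $f_t$, which is a full power series in $t$, not a polynomial. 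So neither ``track the embeddings from Theorem~\ref{thm:smooth-soln}'' nor ``holomorphic implicit function argument on that output'' gives analyticity without further work.

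What the paper actually does is prove directly that the formal power series $f_t=\sum_k f^{(k)}t^k$ \emph{converges}, with an explicitly controlled radius: this is the content of Proposition~\ref{prop:convergence-ft} and the refinement Theorem~\ref{thm:lambda-convergence}. The mechanism is that in the Burns--Epstein region the recursion degenerates to a two-term linear recursion $(\nabla_1)^2 f^{(k)} = -L^{(1)}f^{(k-1)} - L^{(2)}f^{(k-2)}$, and the solution operator $(\overline{\mathcal{Q}_0}Z_{\oneb})^2$ gains exactly the two Folland--Stein derivatives that $L^{(1)}$ and $L^{(2)}$ cost, yielding a geometric bound $\|f^{(k)}\|_s\le (C_s\|\vfd\|_s)^{k+1}$. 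This is what produces analyticity in $\tau$ and, by choosing $\|\vfd\|_s$ small, pushes the radius past~$2$; your explanation that the radius $2$ comes from the pointwise constraint $\|\tau\vfd\|_\infty<1$ is not how the paper controls it. A further point you omit: before one can invoke the embedding machine (Theorem~\ref{P:embed}) one must arrange $\re f_t$ to have strict sign, which the paper does by adding a real constant $\lambda$ to $f_t$ (this uses the kernel of $(\nabla_1)^2$) and is exactly why Theorem~\ref{thm:lambda-convergence} is stated separately from Proposition~\ref{prop:convergence-ft}. With analytic $f_t$ in hand and $\re f_t$ of strict sign, Theorem~\ref{P:embed} together with Remark~\ref{rem:analyticity-of-psi-t} gives the analytic family of embeddings. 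Your alternative via Lempert's extremal discs and Bland's circular representation is a genuinely different route (closer to the original Burns--Epstein argument) and is not what the paper does here.
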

Note that this recovers the result of Burns-Epstein \cite[Theorem 5.3]{BurnsEpstein1990b} by setting $\tau=1$.

As a by-product of our approach, we also establish the embeddability of a family of deformations of an embedded structure that satisfy a well known necessary condition (stated for $t=0$ above), under a natural additional condition that forces the resulting family of embeddings to move inwards (or outwards).
As mentioned above, given a CR 3-manifold $(M,H,J)$ embedded in a complex surface, an infinitesimal deformation tensor $\vfd$ will be infinitesimally embeddable if and only if it satisfies \eqref{eq:inf-emb-eqn} for some complex function $f$ on $M$. Given a family of CR hypersurfaces $M_t\subseteq \mathbb{C}^2$ with $M_0=M$ contact parametrized by $\psi_t:M\to M_t$, with $\psi_0=\mathrm{id}$, \eqref{eq:inf-emb-eqn} applies at each time $t$ on $M_t$. Pulling back using $\psi_t:M\to M_t$ we obtain a family of embeddable deformations $\vf(t)$ with $\vf(0)=0$ and a family of complex functions $f_t$ on $M$ satisfying a certain second order equation at each time $t$, corresponding to \eqref{eq:inf-emb-eqn}; $f_t$ can be interpreted as the complex normal component of the variational vector field $\dot{\psi}_t$ arising from the family of embeddings $\psi_t$ (more precisely, as the corresponding function on $M$). Saying that a family of abstract deformations $\vf(t)$ satisfies this condition (for some family $f_t$) in principle says that the deformation $\vf(t)$ moves tangent to the space of embeddable deformations at each time $t$. Borrowing terminology from Jih-Hsin Cheng \cite{Cheng2014} we will refer to this condition on $\varphi(t)$ as the \emph{tangency condition}; we shall also refer to the family $f_t$ as a \emph{family of potentials} corresponding to $\varphi(t)$. For the precise formulation of the tangency condition see Section \ref{sec:embeddings-from-solutions} and also Lemma \ref{lem:embed}. Given a family of deformations of an embeddable CR structure satisfying the tangency condition, it is natural to ask whether this family is embeddable. Our result is the following:
\begin{theorem}\label{thm:tangency-cond}
Let $M$ be a compact strictly pseudoconvex hypersurface in $\mathbb{C}^2$ and let $\vf(t)$ be a 1-parameter family of deformations of the induced CR structure on $M$ with $\vf(0)=0$. Suppose $\vf(t)$ satisfies the tangency condition for all $t$ with a family of potentials $f_t$ with $\re f_t$ having strict sign. Then there exists $\epsilon>0$ such that $\varphi(t)$ is an embeddable deformation for all $t\in [0,\epsilon)$.
\end{theorem}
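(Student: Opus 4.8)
The plan is to convert the abstract tangency condition into a concrete family of CR embeddings by solving, for each fixed time, the embedding equation via an implicit-function/Newton-type argument, and then to show that the condition $\re f_t > 0$ is exactly what allows the family of embeddings to be propagated forward from $t=0$. The starting observation is that at $t=0$ the deformation $\vf(0)=0$ is embeddable (it is the given embedding $M \subseteq X$), so the issue is purely one of extending embeddability to nearby times. Differentiating the hypothetical family of embeddings $\psi_t : M \to M_t \subseteq X$ in $t$, one sees that the tangency condition $\dot\vf(t) = (\nabla_1\nabla_1 + iA_{11})f_t$ at time $t$ (using the Tanaka--Webster data of the contact form associated to $M_t$, pulled back) is precisely the linearized embeddability equation along the putative curve $t \mapsto M_t$. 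So the natural strategy is: \emph{integrate} this infinitesimal data. Concretely, I would set up an ODE in a suitable Fr\'echet (tame) space whose unknown is a family of embeddings $\Phi_t : M \to X$ with $\Phi_0 = \mathrm{id}$ and $\dot\Phi_t$ equal to the vector field along $\Phi_t$ whose complex normal component is $f_t$ (interpreted via the Levi form and the contact structure of $M_t$) and whose tangential/contact component is chosen so that $\Phi_t$ realizes the prescribed abstract deformation $\vf(t)$. The requirement $\re f_t > 0$ guarantees that the normal component of $\dot\Phi_t$ has a definite sign, so the hypersurfaces $\Phi_t(M)$ move strictly to one side; this both keeps them embedded (no self-intersection, transversality preserved on a time interval by compactness) and pins down the construction uniquely.

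The key steps, in order, would be: (1) Make precise, following Lemma \ref{l:embed1}, the correspondence between a family $f_t$ satisfying the tangency condition and an infinitesimal variation of embeddings; in particular identify the second-order operator $(\nabla_1\nabla_1 + iA_{11})$ appearing at time $t$ as the linearization at $\Phi_t$ of the map sending an embedding to its induced deformation tensor (this is exactly \eqref{eq:inf-emb-eqn}). (2) Show that this linearized operator, as an operator between the relevant tame Fr\'echet spaces, admits a tame right inverse (modulo the finite-dimensional cokernel coming from the obstruction to embeddability, i.e. the low spherical-harmonic/low Fourier-mode components — here one uses strict pseudoconvexity and the closed-range property of $\bar\partial_b$ of Boutet de Monvel--Kohn to get subellipticity and the needed estimates); the normal-velocity splitting $\re f = -\dot\rho$ is what lets us arrange surjectivity onto the relevant directions. (3) Feed this into the Nash--Moser inverse function theorem (Hamilton's version) to solve, for each $t$ near $0$, the nonlinear equation ``$\Phi$ is an embedding inducing $\vf(t)$'' with $\Phi$ close to $\Phi_0 = \mathrm{id}$; the positivity $\re f_t > 0$ provides the a priori control needed to keep the solution in the domain of the iteration and to prevent the embedded hypersurfaces from degenerating, yielding an honest $\epsilon > 0$. (4) Conclude that for $t \in [0,\epsilon)$ the structure $\vf(t)$ is embedded by $\Phi_t$, hence $\vf(t) \in \mathfrak{D}_{emb}$. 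One may also observe that by Lempert's stability theorem embeddability in $X$ (locally near $M$) is equivalent to embeddability in $\mathbb{C}^2$ in the model case, consistent with the other theorems.

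I expect the main obstacle to be Step (2)–(3): establishing the tame estimates for the inverse of the linearized embedding operator \emph{uniformly in $t$} and over the time interval, and in particular handling the finite-dimensional obstruction correctly so that the Nash--Moser scheme actually closes. The operator $(\nabla_1\nabla_1 + iA_{11})$ has a cokernel (the low-degree modes, analogous to the $q=0,1$ modes in the sphere case), and the content of the theorem is that the additional data — the \emph{real} part of $f_t$ being the normal velocity, together with $\re f_t > 0$ — lets us steer the evolution so that these obstructed directions are never hit, or are absorbed by reparametrization by contact diffeomorphisms. Making this steering rigorous, i.e. showing the flow stays tangent to $\mathfrak{D}_{emb}$ and the Nash--Moser correction at each time does not spoil this, is the crux; the positivity hypothesis is used precisely here (it is what forces the family of embeddings to move monotonically outward/inward and thereby avoids the obstruction, in the spirit of the outward-moving families in Burns--Epstein and Bland's circular-model picture). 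The compactness of $M$ is used throughout to upgrade pointwise transversality and injectivity of $\Phi_t$ into an open condition in $t$, giving the uniform $\epsilon$.
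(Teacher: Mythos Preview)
Your high-level instinct --- integrate the infinitesimal embeddability data $f_t$ to produce an actual family of embeddings $\psi_t$ --- is exactly right, and is what the paper does. But the execution you propose goes in the wrong direction, and your diagnosis of the main obstacle is off.

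First, the cokernel of $f \mapsto (\nabla_1\nabla_1 + iA_{11})f$ is \emph{infinite}-dimensional (on $S^3$ it is $\mathfrak{D}_0^{\perp}$, all of the $q=0,1$ modes), not finite; so there is no hope of treating it as a finite obstruction absorbed by reparametrization. More importantly, the Nash--Moser scheme you sketch in Steps~(2)--(3) is aimed at the wrong target: solving ``$\Phi$ induces $\vf(t)$'' for each fixed $t$ would require $\vf(t)$ to lie in the image of the nonlinear embedding map, which is the conclusion, not something the tangency hypothesis gives directly. The tangency condition only says $\dot\vf(t)$ is in the range of the \emph{linearized} operator along the unknown curve.

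The paper's route is far more direct and avoids Nash--Moser entirely. Using the ambient-metric framework of Hirachi--Marugame--Matsumoto, one derives an explicit first-order transport system on $M_0$ for the pair $(\psi_t,\gamma_t)$, where $\gamma_t$ is a conformal factor; the crucial geometric lemma is that $(\underline{\Phi_t})_* JT^0 = J(\underline{\Phi_t})_* T^0$ along $M_0$, which allows the normal derivative of $\psi_t$ to be expressed via its tangential derivatives, so the evolution closes up as a genuine transport PDE in data on $M_0$ alone. This is solved by the method of characteristics for short time. The positivity $\re f_t > 0$ is \emph{not} used to steer around an obstruction or to control a Nash--Moser iteration; it enters only at the end, to achieve a normalization ($N^t\re\underline{F_t}=0$ along $M_t$, obtained by rescaling the defining function, which requires dividing by $\dot x_t|_{M_t}$) that lets one verify the constructed $\psi_t$ realizes $\vf(t)$. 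Equivalently, the sign condition says the $M_t$ foliate a one-sided neighborhood of $M_0$, which is what makes the construction of the compatible defining functions $r_t$ go through.
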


For a more precise statement see Theorem \ref{P:embed} below. This generalizes Cheng's theorem for fillable structures \cite[Theorem A]{Cheng2014} to the case of embeddable structures and allows $\re f_t$ to have either sign. We remark that the sufficient condition in Theorem \ref{thm:tangency-cond} for embeddability of the family $\vf(t)$ is also a necessary condition. This follows because a family of CR hypersurfaces $M_t\subseteq \mathbb{C}^2$ ($t\in [0,\epsilon]$) can always be re-embedded by applying a smooth $t$-dependent family of dilations such that it moves to the pseudoconvex side for all time (or, if desired instead, to the pseudconcave side for all time) so that the family of potentials $f_t$ then satisfies $\re f_t>0$ for all $t$ (or, if desired instead, $\re f_t<0$ for all $t$). See Section \ref{sec:emb-def}.

This paper is organized as follows. In Section \ref{sec:deformations} we give some preliminaries on deformations of $3$-dimensional CR structures and introduce the tangency equation for families of embeddable deformations, which makes precise the tangency condition referred to in Theorem \ref{thm:tangency-cond}. In Section \ref{sec:embeddings-from-solutions} we explain how one obtains embeddings from solutions to the tangency equation and establish
Theorem \ref{P:embed}, which implies Theorem \ref{thm:tangency-cond}. In Section \ref{sec:solutions-tangency-eqn} we study the solvability of the tangency equation for small deformations of the standard CR $3$-sphere, and establish Theorems \ref{thm:vft} and  \ref{thm:BE-parametric}. Finally, in Section \ref{sec:proofs-slice-thms} we prove the slice theorems, Theorem \ref{thm:CL-slice-mod} and Theorem \ref{thm:slice}. We remark that the main sections, Sections \ref{sec:embeddings-from-solutions}-\ref{sec:proofs-slice-thms}, are largely independent from each other. Section \ref{sec:embeddings-from-solutions} is primarily geometric, and makes use of the Fefferman ambient metric construction in the framework of Hirachi-Marugame-Matsumoto \cite{HirachiMarugameMatsumoto2017}. In both Section \ref{sec:solutions-tangency-eqn} and Section \ref{sec:proofs-slice-thms} we make use of the Nash-Moser inverse function theorem as presented in Hamilton \cite{Hamilton1982BAMS} (see also Cheng-Lee \cite{ChengLee1995} for a brief introduction to this and Hamilton's tame Fr\'echet category). These sections also make use of an elliptic regularity argument adapted from \cite{BurnsEpstein1990b} which appears first in the proof of Proposition \ref{prop:convergence-ft}. The proof of Theorem \ref{thm:smooth-soln} (a more precise version of Theorem \ref{thm:vft}) uses arguments from the theory of parabolic evolution equations.

%- Bland's paper has a small gap where he missed that he really loses regularity when putting the deformation in normal form.
%- This is fixed (though in a modified framework) in the two papers of Bland-Duchamp JDG/JSG
%- The Bland-Duchamp approach uses (autonomous) flows to parametrize the contact diffeomorphism group near the identity, hence one cannot get a $C^{\infty}$ result from this (at least not without making some changes).
\subsection*{Acknowledgments}
The authors would like to thank the anonymous referee for helpful comments leading to a significant improvement of the exposition.

\section{Deformations of $3$-Dimensional CR Structures}\label{sec:deformations}

%Gray's theorem
%Refer to T-W

Let $M$ be a smooth oriented $3$-manifold. A \emph{contact structure} on $M$ is a rank $2$ subbundle $H\subset TM$ which is nondegenerate in the sense that if $H$ is locally given as the kernel of some $1$-form $\theta$, then $\theta\wedge d\theta$ is nowhere vanishing.
A \emph{CR structure} on $(M,H)$ is given by a smooth endomorphism $J:H\to H$ such that $J^2=-\mathrm{id}$. We refer to $(M,H,J)$ as a \emph{strictly pseudoconvex CR $3$-manifold}. The partial complex structure $J$ on $H\subset TM$ defines an orientation of $H$, and therefore defines an orientation on the annihilator subbundle $H^{\perp}:=\mathrm{Ann}(H)\subset T^*M$. A nowhere vanishing section $\theta$ of $H^{\perp}$ is called a \emph{contact form} for $H$. A contact form $\theta$ is positively oriented if $d \theta|_{H}$ is compatible with the orientation of $H$, equivalently, if $d\theta( \,\cdot\, , J\,\cdot\,)$ is positive definite on $H$. A CR structure $(M,H,J)$ together with a choice of positively oriented contact form $\theta$ is referred to as a \emph{pseudohermitian structure} \cite{Tanaka1975,Webster1978}. The \emph{Reeb vector field} of a contact form $\theta$ is the vector field $T$ uniquely determined by $\theta(T)=1$ and $d\theta(T,\,\cdot\,) =0$.

Given a CR manifold $(M,H,J)$ we decompose the complexified contact distribution $\mathbb{C}\otimes H$
as $T^{1,0}\oplus T^{0,1}$, where $J$ acts by $i$ on $T^{1,0}$ and by $-i$ on $T^{0,1}=\overline{T^{1,0}}$. Let $\theta$ be a positively oriented contact form on $M$. Let $Z_1$ be a local frame for the \emph{holomorphic tangent bundle} $T^{1,0}$ and $Z_{\oneb}=\overline{Z_1}$, so that $\{T,Z_1,Z_{\oneb}\}$ is a local frame for $\mathbb{C}\otimes TM$. Then the dual frame $\{\theta,\theta^1,\theta^{\oneb}\}$ is referred to as an \emph{admissible coframe} and one has
\begin{equation}\label{eqn:h11-definition}
d \theta = i h_{1\oneb} \theta^1 \wedge \theta^{\oneb}
\end{equation}
for some positive smooth function $h_{1\oneb}$. The function $h_{1\oneb}$ is the component of the \emph{Levi form} $\mathrm{L}_{\theta}(U,\overline{V})=-id\theta(U,\overline{V})$ on $T^{1,0}$, that is
\begin{equation*}
\mathrm{L}_{\theta}(U^1Z_1,V^{\oneb}Z_{\oneb}) = h_{1\oneb}U^1V^{\oneb}.
\end{equation*}
It is convenient to scale $Z_1$ so that $h_{1\oneb}=1$, and we will typically do so. In any case, we write $h^{1\oneb}$ for the multiplicative inverse of $h_{1\oneb}$. The Tanaka-Webster connection associated to $\theta$ is given in terms of such a local frame $\{T,Z_1,Z_{\oneb}\}$ by
\begin{equation*}
\nabla Z_1 = \omega_1{}^{1}\otimes Z_1, \quad \nabla Z_{\oneb} = \omega_{\oneb}{}^{\oneb}\otimes Z_{\oneb}, \quad \nabla T =0
\end{equation*}
where the connection $1$-forms $\omega_1{}^{1}$ and $\omega_{\oneb}{}^{\oneb}$ satisfy
\begin{equation}\label{eqn:pseudohermitian-connection1}
d \theta^1 = \theta^1\wedge \omega_1{}^{1} + A^1{}_{\oneb}\,\theta\wedge\theta^{\oneb}, \text{ and}
\end{equation}
\begin{equation}\label{eqn:pseudohermitian-connection2}
\omega_1{}^{1} + \omega_{\oneb}{}^{\oneb} =h^{1\oneb} d h_{1\oneb},
\end{equation}
for some function $A^1{}_{\oneb}$. The uniquely determined function $A^1{}_{\oneb}$ is known as the \emph{pseudohermitian torsion}. Components of covariant derivatives will be denoted by adding $\nabla$ with an appropriate subscript, so, e.g., if $u$ is a function then $\nabla_1 u = Z_1 u$, $\nabla_1\nabla_1 u = Z_1 Z_1 u - \omega_1{}^{1}(Z_1)Z_1u$ and $\nabla_0\nabla_1 u = T Z_1 u - \omega_1{}^{1}(T)Z_1u$. We may also use $h_{1\oneb}$ and $h^{1\oneb}$ to raise and lower indices, so that $A_{\oneb\oneb}= h_{1\oneb}A^1{}_{\oneb}$ and $A_{11}=h_{1\oneb}A^{\oneb}{}_{1}$, with $A^{\oneb}{}_{1} =\overline{A^1{}_{\oneb}}$. Note that when $h_{1\oneb}=1$ raising and lowering indices is a trivial operation.

\subsection{Abstract Deformations}

Let $(M,H,J)$ be a compact, strictly pseudoconvex, three-dimensional CR manifold. Consider a smooth family of CR structures $(M, H_t, J_t)$ on $M$ with $(H_0,J_0)=(H,J)$. By Gray's theorem \cite{Gray1959} this family may be pulled back by a smooth family of diffeomorphisms to a family of the form $(M, H, \tilde{J}_t)$. When considering families of CR structures on $M$ we therefore always keep the contact distribution $H$ fixed. If $Z_1$ is holomorphic tangent vector field on $(M,H,J)$ then this amounts to requiring that the holomorphic tangent space of our deformed structure is spanned by a vector field of the form $Z_1 + \vf_{1}{}^{\oneb}Z_{\oneb}$ for some complex function $\vf = \vf_1{}^{\oneb}$ with $|\vf|^2 < 1$ on $M$. We shall fix a contact form $\theta$ on $M$ such that $Z_1$ is unitary (i.e. $h_{1\oneb}=1$ with respect to $Z_1$). Given a deformed CR structure spanned by $Z_1 + \vf_{1}{}^{\oneb}Z_{\oneb}$ we will always work with the normalized frame
\beq
Z_1^{\vf} = \frac{1}{\sqrt{1-|\vf|^2}}\left(Z_1 + \vf_{1}{}^{\oneb}Z_{\oneb}\right)
\eeq
so that the Levi form of $\theta$ with respect to the  deformed structure has component $h_{1\oneb}^{\vf}=1$. Given a family $(M, H, J_t)$ of CR structures on $M$, we may describe the deformation $J_t$ by a deformation tensor $\vf_1{}^{\bar 1}(t)$ via
\beq\label{Zdef}
Z_1^t:=\frac{1}{\sqrt{1-|\vf|^2}}\left(Z_1+\vf_{1}{}^{\bar 1}(t)Z_{\bar 1}\right),
\eeq
where we use the shorthand notation
\beq
|\vf|^2=|\vf_1{}^{\bar 1}(t)|^2.
\eeq
The corresponding admissible coframe $(\theta, \theta^1_t,\theta^{\oneb}_t)$ is obtained by choosing
\beq\label{th1def}
\theta^1_t:=\frac{1}{\sqrt{1-|\vf|^2}}\,\left (\theta^1-\vf_{\bar 1}{}^1(t)\, \theta^{\bar 1}\right ).
\eeq
Note that $J_t$ is easily recovered by writing $J_t = iZ_1^t\otimes \theta^1_t - i Z_{\oneb}^t \otimes \theta^{\oneb}_t$.
It is useful to invert the transformations $(Z_1, Z_{\bar 1})\mapsto (Z^t_1, Z^t_{\bar 1})$ and $(\theta^1,\theta^{\bar 1})\mapsto (\theta^1_t,\theta^{\bar 1}_t)$:
\begin{equation}
\label{ttransinv}
\begin{aligned}
Z_1 &=\frac{1}{\sqrt{1-|\vf|^2}}\,\left (Z^t_1-\vf_1{}^{\bar 1}(t)Z^t_{\bar 1}\right ),\\
\theta^1 &= \frac{1}{\sqrt{1-|\vf|^2}}\, \left(\theta^1_t+\vf_{\bar 1}{}^1(t)\theta^{\bar 1}_t\right).
\end{aligned}
\end{equation}
We denote by $\nabla^t$ the Tanaka-Webster connection of the pseudohermitian structure $(M,H,J_t,\theta)$, and by $A_{11}(t)$ its pseudohermitian torsion in the coframe $(\theta, \theta^1_t,\theta^{\oneb}_t)$. For the connection form $\omega_1{}^1$ on $M$ relative to the admissible coframe $(\theta,\theta^1,\theta^{\bar1})$, we shall write
\beq\label{connform}
\omega_1{}^1=\omega_1{}^1{}_1\,\theta^1+\omega_1{}^1{}_{\bar 1}\,\theta^{\bar 1}+\omega_1{}^1{}_0\,\theta,
\eeq
and similarly for the connection forms $\omega_1{}^1(t)$ of $\nabla^t$,
\beq\label{connform-t}
\omega_1{}^1(t)=\omega_1{}^1{}_1(t)\,\theta^1_t+\omega_1{}^1{}_{\bar 1}(t)\,\theta^{\bar 1}_t+\omega_1{}^1{}_0(t)\,\theta.
\eeq
Note that we then have, for a smooth function $f$,
\beq\label{defeq}
\nabla^t_{1}\nabla^t_{1} f=(Z^t_1)^2 f - \omega_1{}^1{}_1(t)Z^t_1f.
\eeq

\subsection{Embedded Deformations}\label{sec:emb-def}
Let $M$ be a strictly pseudoconvex hypersurface in a complex surface $X$. Then $M$ carries an \emph{induced CR structure} $(M,H,J)$, where $H_p$ is the maximal complex subspace in $T_p M\subset T_p X$ for each $p\in M$ and $J$ is induced from the standard complex structure on $X$.
We say that a smooth family of embeddings $\psi_t:M\to X$, $t\in[0,\epsilon)$, is a \emph{parametrized deformation} of $M$ if $\psi_0 = \mathrm{id}_M$ and $M_t=\psi_t(M)$ is strictly pseudoconvex for all $t$. 
By pulling back the induced CR stuctures on $M_t$ by $\psi_t$ for each $t$, one obtains a smooth family of CR structures $(M,H_t,J_t)$ on $M$ with $(M,H_0,J_0)=(M,H,J)$. 
We say that a parametrized deformation $\psi_t$ of $M$ is \emph{contact parametrized} if the induced family of CR structures $(M,H_t,J_t)$ on $M$ satisfies $H_t=H$ for all $t$, equivalently if $\psi_t:M\to M_t$ is a contact diffeomorphism for all $t$, where the contact structure on $M_t\subset X$ comes from the induced CR structure. 
By Gray's stability theorem \cite{Gray1959} any parametrized deformation may be reparametrized by a $1$-parameter family of diffeomorphisms of $M$ so that it becomes a contact parametrized deformation.

\begin{lemma}\label{lem:embed} 
Let $\psi_t:M\to X$, $t\in[0,\epsilon)$, be a contact parametrized deformation of the strictly pseudoconvex hypersurface $M\subset X$. Let $(M,H,J_t)$ denote the corresponding family of CR structures on $M$ and let $(\theta, \theta^1_t,\theta^{\oneb}_t)$ be a corresponding family of admissible coframes with deformation tensor $\vf_{11}(t)$ as above. Then there is a smooth family of functions $f_t\in C^{\infty}(M,\mathbb{C})$ such that
\beq\label{eqn:psi-dot-from-ft}
\dot{\psi}_t = {J\psi_t}_*\re\left(f_t T\right) - {\psi_t}_*\re\left( if_t T
+2(\nabla^{\oneb}_t f_t) Z^t_{\oneb}\right),
\eeq
and 
\beq\label{Nabla2ft}
\nabla^t_{1}\nabla^t_{1}\, f_t+iA_{11}(t)f_t=\frac{\dot\vf_{11}(t)}{1-|\vf(t)|^2},
\eeq 
where $\dot{\psi}_t = \frac{d}{dt} \psi_t$, $\dot \vf_{11}(t)=\frac{d}{dt}\, \vf_{11}(t)$ and $J$ denotes the complex structure on $X$. %Moreover, the family of functions $f_t$ depends only on the choice of contact form $\theta$ and not on the choice of $\theta^1$.
\end{lemma}
\begin{remark}
(i) Lemma \ref{lem:embed} is proved, e.g., in \cite[Lemma 4.5]{CE2018-obstruction-flatI} for the case of $t=0$ (note that what we are denoting by $f_0$ corresponds to $i\bar{f}$ in \cite{CE2018-obstruction-flatI}) and the generalization to arbitrary times $t$ is straightforward, though we include the full proof below for completeness. For other presentations, see \cite{BlandEpstein1996,Cheng2014,HirachiMarugameMatsumoto2017}. (ii) The denominator in the right hand side of \eqref{Nabla2ft} will arise naturally in the proof below, but it can also be explained by comparing the above lemma with \cite[Lemma 4.5]{CE2018-obstruction-flatI} and noting that by \eqref{Zdef} and \eqref{ttransinv} we have
\beq
Z_1^{t+s}=(1+O(s))Z^t_1+\left (\frac{\dot\vf_1{}^{\bar 1}(t)}{1-|\vf(t)|^2}\,s+O(s^2)\right)Z^t_{\bar 1}
\eeq
so that the infinitesimal deformation tensor for the family of CR structures $(M,H,J_t)$ at time $t$, relative to the frame $(Z_1^t, Z_{\oneb}^t)$, is $\frac{\dot\vf_1{}^{\bar 1}(t)}{1-|\vf(t)|^2}$.  (iii) Note that, for each $t$, the vector field ${\psi_t}_*\re\left( if_t T +2(\nabla^{\oneb}_t f_t) Z^t_{\oneb}\right)$ is tangent to $M_t = \psi_t(M)$. Hence, with our current conventions, if $f_t$ is pure imaginary for all $t$ then $\psi_t (M) = M$ for all $t$ and $\psi_t:M\to M$ is a family of contact diffeomorphisms of $(M,H)$. In general, the complex function $if_t$ is a kind of generalized Hamiltonian potential for the infinitesimal motion of $M_t$ in $X$; see Section 4 of \cite{CE2018-obstruction-flatI} for a more detailed discussion. (iv) Note that ${J\psi_t}_*T$ points to the pseudoconvex side of $M_t$ for each $t$. Hence, in particular, if $M_0$ bounds a strictly pseudoconvex domain and $\re f_t>0$ for all $t$, then $M_t$ moves inward (i.e.\ to the pseudoconvex side) as $t$ increases; on the other hand, if $\re f_t<0$ then $M_t$ moves outward. (v) Note that the family of functions $f_t$ depend only on the choice of contact form $\theta$, and not on the choice of coframing; if $\theta$ is replaced by $e^{\Upsilon}\theta$ for some smooth function $\Upsilon$ on $M$ then the functions $f_t$ must replaced by $e^{\Upsilon}f_t$. The family of functions $f_t$ may be more invariantly described as a family of densities (cf.\ \cite{CE2018-obstruction-flatI}), but this will not concern us since we will always work with respect to a fixed contact form. 
\end{remark}
\begin{proof}[Proof of Lemma \ref{lem:embed}]
%For simplicity we consider only the case where $X$ is $\mathbb{C}^2$ with coordinates $(z^1,z^2)$; the case of general $X$ follows easily by arguing in the same way but with $(z^1,z^2)$ being local coordinates in $X$. 
Let $(T,Z_1^t,Z_{\oneb}^t)$ denote the frame dual to $(\theta, \theta^1_t,\theta^{\oneb}_t)$.  Since $\psi_t : M\to M_t$ is a diffeomorphism pulling back the CR structure on $M_t\subset X$ to the CR structure $(M,H,J_t)$ on $M$, the frame $(T, Z^t_1, Z^t_{\oneb})$ pushes forward under $\psi_t$ to a frame $(T^t, L^t_1, L^t_{\oneb})$ for $M_t$ such that $L^t_{\oneb}$ is a section of $T^{(0,1)}X|_{M_t}$ and $J T^t$ is everywhere transverse to $M_t$. The family of functions $f_t$ arises from taking components of $\dot{\psi}_t$ that are adapted to this framing as follows:  Since $\dot{\psi}_t\circ \psi_t^{-1}$ is a section of $TX|_{M_t}$ it can be written as
\beq \label{eqn:psi-t-dot-circ-psi-t-inv}
\dot{\psi}_t\circ \psi_t^{-1} = a_t JT^t + b_t T^t + c^1_t L^t_1 + c^{\oneb}_t L^t_{\oneb} 
\eeq 
for some (unique) smooth functions $a_t$, $b_t$, $c^1_t$ on $M_t$ with $a_t$ and $b_t$ real and $c^{\oneb}_t = \overline{c^1_t}$. Setting $f_t = (a_t+ib_t) \circ \psi_t$ and $V^{1}_t = -c^{1}_t \circ \psi_t$, equation \eqref{eqn:psi-t-dot-circ-psi-t-inv} becomes
\beq \label{eqn:psi-dot-components}
\dot{\psi}_t = {J\psi_t}_*\re\left(f_t T\right) -{\psi_t}_*\re\left(if_t T
+2V^{\oneb}_t Z^t_{\oneb}\right),
\eeq
where $V^{\oneb}_t$ is the conjugate of $V^1_t$. The smooth functions $f_t$ and $V^{1}_t$ on $M$ are uniquely determined for each $t$ and smooth in $t$, since $\psi_t$ is smooth in $t$. 

To obtain \eqref{eqn:psi-dot-from-ft} we need to show that $V^{\oneb}_t = \nabla^{\oneb}_t f_t$. In order to prove this and \eqref{Nabla2ft} it will be convenient to introduce (arbitrary) local coordinates $(z^1,z^2)$ on $X$. We write $\psi_t$ in these coordinates as $\psi_t=(\psi^1_t,\psi^2_t)$. Since, by definition, $\psi_t:(M,H,J_t)\to X$ is a CR embedding for each $t\in [0,\epsilon)$, it follows that the component functions $\psi^1_t$ and $\psi^2_t$ are CR functions with respect to $(M,H,J_t)$. Thus for all $t$ we have $Z_{\oneb}^t \psi_t^k =0$, $k=1,2$. Differentiating this expression with respect to $t$ we have $Z_{\oneb}^t \dot{\psi}_t^k + \dot{Z}_{\oneb}^t \psi_t^k=0$, $k=1,2$, where 
\begin{align*}
\dot{Z}_{\oneb}^t = \frac{d}{dt}Z_{\oneb}^t
&= \frac{\vfd_{\oneb}{}^1(t)}{\sqrt{1-|\vf|^2}} Z_1 \mod  Z^t_{\oneb}\\ 
&= \frac{\vfd_{\oneb}{}^1(t)}{1-|\vf|^2} Z^t_1 \mod  Z^t_{\oneb}, 
\end{align*}
using \eqref{ttransinv}. Hence, 
\beq\label{eqn:d-dt-of-CR-condition}
Z_{\oneb}^t \dot{\psi}_t^k + \frac{\vfd_{\oneb}{}^1(t)}{1-|\vf|^2} Z^t_1\psi_t^k = 0, \qquad k=1,2.
\eeq
The relationship between $V^1_t$ and $f_t$ will follow by substituting \eqref{eqn:psi-dot-components} into \eqref{eqn:d-dt-of-CR-condition}; to do this we first write \eqref{eqn:psi-dot-components} in terms of $(z^1,z^2)$-components by applying $dz^k$ to each side, giving
\begin{align}
\dot{\psi}_t^k &= i \left[{\psi_t}_*\re\left(f_t T\right) \right] z^k - \left[{\psi_t}_*\re\left( if_t T
+2V^{\oneb}_t Z^t_{\oneb}\right) \right] z^k \\
\nonumber &= i \re\left(f_t  T \right) \psi_t^k - \re\left( if_t T
+2V^{\oneb}_t Z^t_{\oneb}\right) \psi_t^k \\
\nonumber &= i \bar{f_t}  T  \psi_t^k 
-V^1_t Z^t_1 \psi_t^k, 
\end{align}
for $k=1,2$, where we have used that $d z^k \circ J = id z^k$ and that $Z^t_{\oneb}\psi^k_t=0$. Thus 
\beq\label{eqn:Z1bar-on-psik}
Z_{\oneb}^t \dot{\psi}_t^k = i(Z^t_{\oneb}\bar{f_t})T\psi_t^k + i \bar{f_t} Z^t_{\oneb}T\psi_t^k - (Z^t_{\oneb}V^1_t)Z^t_1\psi_t^k - V^1_t Z^t_{\oneb}Z^t_1\psi^k,
\eeq
for $k=1,2$. We can simplify the above by noting that $Z^t_{\oneb}T\psi_t^k = [Z^t_{\oneb},T]\psi_t^k$ and $Z^t_{\oneb}Z^t_1\psi^k = [Z^t_{\oneb},Z^t_1]\psi^k$, since $Z^t_{\oneb}\psi^k_t=0$, and using the following easy consequences of the Tanaka-Webster structure equations (cf.\ \cite[page 418]{Lee1986}):
\beq
[Z^t_{\oneb},Z^t_1]= ih^t_{1\oneb}T + \omega_1{}^1{}_{\oneb}(t)Z_1^t - \omega_{\oneb}{}^{\oneb}{}_1(t)Z_{\oneb}^t
\eeq
(where we retain the Levi form component $h^t_{1\oneb}$ even though it is $1$ for all $t$) and
\beq
[Z^t_{\oneb},T] = A^1{}_{\oneb}(t)Z_1^t - \omega_{\oneb}{}^{\oneb}{}_0(t)Z_{\oneb}^t.
\eeq
We obtain 
\beq
Z_{\oneb}^t \dot{\psi}_t^k = (iZ^t_{\oneb}\bar{f_t} - ih^t_{1\oneb}V^1_t )T\psi_t^k + (i\bar{f_t}A^1{}_{\oneb}(t) -Z^t_{\oneb}V^1_t - \omega_1{}^1{}_{\oneb}(t)V^1_t)Z^t_1\psi_t^k, 
\eeq
for $k=1,2$. Substituting this into \eqref{eqn:d-dt-of-CR-condition} we obtain
\beq \label{eqn:ft-and-V1-conditions}
(iZ^t_{\oneb}\bar{f_t} - ih^t_{1\oneb}V^1_t )T\psi_t^k + (i\bar{f_t}A^1{}_{\oneb}(t) -Z^t_{\oneb}V^1_t - \omega_1{}^1{}_{\oneb}(t)V^1_t + \frac{\vfd_{\oneb}{}^1(t)}{1-|\vf|^2} )Z^t_1\psi_t^k  =0,
\eeq
for $k=1,2$. Since $\psi_t:(M,H,J_t)\to X$ is a CR embedding for each $t$, we have
\begin{equation*}
\det \begin{pmatrix} Z^t_1\psi_t^1 &Z^t_1 \psi_t^2\\ T\psi^1_t & T\psi^2_t
\end{pmatrix}\neq 0
\end{equation*}
for all $t$. Hence \eqref{eqn:ft-and-V1-conditions} implies that $iZ^t_{\oneb}\bar{f_t} - ih^t_{1\oneb}V^1_t=0$ (i.e.\ $V^1_t = \nabla^1_t \bar{f_t}$, or equivalently, $V^{\oneb}_t = \nabla^{\oneb}_t f_t$) and
\beq \label{eqn:tangency-condition-messy}
i\bar{f_t}A^1{}_{\oneb}(t) -Z^t_{\oneb}V^1_t - \omega_1{}^1{}_{\oneb}(t)V^1_t + \frac{\vfd_{\oneb}{}^1(t)}{1-|\vf|^2} =0;
\eeq
recalling that the choice of local coordinate system $(z^1,z^2)$ was arbitrary, these identities hold on all of $M$ and for all $t$. Finally, using that $Z^t_{\oneb}V^1_t + \omega_1{}^1{}_{\oneb}(t)V^1_t = \nabla_{\oneb}^tV^1$ and that $V^1_t = \nabla^1_t \bar{f_t}$, and lowering an index using $h^t_{1\oneb}$ ($=1$ for all $t$), \eqref{eqn:tangency-condition-messy} gives
\beq
\nabla_{\oneb}^t\nabla_{\oneb}^t \bar{f_t} - iA_{\oneb\oneb}(t)\bar{f_t} = \frac{\vfd_{\oneb\oneb}(t)}{1-|\vf|^2},
\eeq
which is the conjugate of \eqref{Nabla2ft}. This proves the result.
\end{proof}

Note that the contact parametrization $\psi_t:M\to X$ in Lemma \ref{lem:embed} can always be replaced by one for which the corresponding functions $f_t$ are real-valued for all $t$ (the imaginary parts of the original functions $f_t$ generate a family of contact diffeomorphisms of $(M,H)$ that can be composed with the original $\psi_t$ to give the desired reparametrization, cf.\ \cite[Lemma 4.6]{CE2018-obstruction-flatI}). For our purposes, however, it will be advantageous to retain the flexibility of allowing $f_t$ to be complex.

Note also, that if $M_t$, $t\in [0,\epsilon]$, is a family of compact strictly pseudoconvex hypersurfaces in $\mathbb{C}^2$ then one can always find a smooth family of dilations $F_t$ such that the equivalent family $F_t(M_t)$ moves inwards for all $t$ and hence corresponds to a family of potentials with $\re f_t > 0$ for all $t$. Alternatively, by the same argument, $F_t(M_t)$ can be taken to move outwards for all $t$ corresponding to a family of potentials with $\re f_t < 0$. Thus, if we seek to characterize families of embeddable deformations via the existence of a family of solutions to \eqref{Nabla2ft}, it is no loss of generality in asking for our family $f_t$ to satisfy that $\re f_t$ has strict sign.

\section{Embeddings from Solutions to the Tangency Equation} \label{sec:embeddings-from-solutions}

In this section we shall show that the tangency condition does in fact characterize embeddability. More precisely, we shall show that it is possible to construct a smooth family of CR embeddings from a smooth family of potentials $f_t$ that solve the \emph{tangency equation} \eqref{Nabla2ft} for a given family of deformations $\vf(t)$; such a family of deformations $\vf(t)$ is said to satisfy the \emph{tangency condition}. The aim of this section is to prove the following result, from which Theorem \ref{thm:tangency-cond} directly follows.

\begin{theorem}\label{P:embed} Let $M=M_0$ be a compact smooth hypersurface bounding a strictly pseudoconvex domain $\Omega\subset\mathbb{C}^2$. Let $(M,H,J_t)$ be a smooth family of CR structures on $(M,H)$ with $J_0=J$ where $(M,H,J)$ is the CR structure induced on $M\subseteq \mathbb{C}^2$. Let $\vf(t)$ be the associated family of deformation tensors given by \eqref{Zdef}. Assume that there is a smooth family of solutions $f_t\in C^\infty(M,\bC)$ to the tangency equation \eqref{Nabla2ft} with $\re f_t$ having strict sign. 
Then, for a sufficiently small $\epsilon>0$, there is a family of mappings $\psi\colon M\times[0,\eps)\to \mathbb{C}^2$ such that:
\begin{itemize}
\item[(i)] $\psi_t\colon M\to \mathbb{C}^2$ is an embedding for each $t\in [0,\epsilon)$ with $\psi_0 =\mathrm{id}$, where $\psi_t:=\psi(\cdot,t)$.
\item[(ii)] $\psi_t$ is a CR diffeomorphism of $(M,H,J_t)$ onto the image $M_t:=\psi_t(M)\subset \mathbb{C}^2$.
\end{itemize}
\end{theorem}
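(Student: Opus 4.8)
The plan is to realize the embeddings $\psi_t$ as boundary restrictions of maps produced by deforming the complex structure of a collar neighborhood of $M$ in $X$ and then invoking Lempert's / Bland–Duchamp style filling machinery — or, more in keeping with the "ambient metric" remark in the introduction, to reconstruct the embeddings directly from the potentials $f_t$ via a Fefferman-type construction. Concretely, I would first use the family $f_t$ to build a candidate family of defining functions $\rho_t$ for hypersurfaces $M_t\subseteq X$: the tangency equation \eqref{Nabla2ft} is precisely the linearized embeddability relation $\dot\vf_{11}=(\nabla_1\nabla_1+iA_{11})f$ at each time $t$, with $\re f_t$ playing the role of minus the normal velocity $-\dot\rho$. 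Since $\re f_t$ has a strict sign, integrating the vector field on $X$ whose complex Reeb component (with respect to the evolving contact forms) is $f_t$ produces a genuine family of \emph{real} hypersurface deformations moving strictly outward (or inward), not merely a formal/infinitesimal one. The main analytic content is that the CR structure this family of hypersurfaces inherits from $X$ agrees with the prescribed $(M,H,J_t)$; this is where one must argue that satisfying the first-order (tangency) condition at every time, together with the correct initial condition $J_0=J$, forces the induced structures to coincide, i.e. an ODE-uniqueness argument in the (infinite-dimensional) space of pairs (hypersurface, parametrization).

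The key steps, in order, would be: (1) set up the ambient/collar picture — fix a strictly pseudoconvex collar $U\cong M\times(-\delta,\delta)$ of $M$ in $X$ with a defining function $\rho$, and record how $\theta_t:=i\partial\rho_t|_{M}$, the Tanaka–Webster data $\nabla^t$, and $A_{11}(t)$ vary; (2) promote $f_t$ to a vector field $V_t$ on (a neighborhood of) $M$ in $X$ whose restriction to $M$ has prescribed complex Reeb component $f_t$ and whose real part governs the normal motion — here $\re f_t>0$ (or $<0$) guarantees $V_t$ is transverse to $M_t$ for $t$ small, so its flow $\psi_t$ exists on $[0,\epsilon)$, is an embedding, and satisfies $\psi_0=\mathrm{id}$; (3) verify (ii): show that $\psi_t^*(J^X|_{M_t})$ has deformation tensor equal to $\vf(t)$. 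For (3) I would differentiate: let $\tilde J_t:=\psi_t^*(J^X|_{M_t})$ with deformation tensor $\tilde\vf(t)$; by Lemma \ref{l:embed1} applied to the family $M_t=\psi_t(M)$ the tensor $\tilde\vf(t)$ satisfies the tangency equation with some potential $\tilde f_t$ (the complex Reeb component of $\dot\psi_t$), and by construction $\tilde f_t=f_t$; hence $\tilde\vf(t)$ and $\vf(t)$ solve the same linear-in-the-derivative evolution equation $\dot\psi_{11}(t)=(1-|\vf|^2)\big((\nabla^t_1\nabla^t_1+iA_{11}(t))f_t\big)$ — but note the coefficients $\nabla^t$, $A_{11}(t)$ depend on the unknown, so this is really a first-order quasilinear ODE in $\vf$; with the common initial value $\vf(0)=\tilde\vf(0)=0$, uniqueness for ODEs in the relevant Banach/tame-Fréchet setting gives $\tilde\vf\equiv\vf$ on $[0,\epsilon)$.

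The hard part will be step (3), and within it the point that the tangency equation, which a priori is only a \emph{necessary} condition coming from the existence of \emph{some} filling, can be run backwards: one must show that choosing $V_t$ with exactly the complex Reeb component $f_t$ makes the induced family of CR structures solve the \emph{same} evolution equation as the abstract family $\vf(t)$, so that embeddability is not just tangency-to-first-order but genuine. This requires carefully matching conventions (the factor-of-$i$ issue flagged after Lemma \ref{l:embed1}, and the normalization $h_{1\oneb}=1$), controlling the $t$-dependence of $\nabla^t$ and $A_{11}(t)$ as tame-smooth functions of $\vf(t)$ so the ODE-uniqueness theorem applies, and using $\re f_t\neq 0$ not merely for transversality but to ensure the flow stays inside the strictly pseudoconvex collar where $J^X$ restricts to a strictly pseudoconvex CR structure on each $M_t$. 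The remaining steps — existence of the collar, smoothness of the flow in $t$, and that $\psi_t$ is a diffeomorphism for small $t$ by continuity from $\psi_0=\mathrm{id}$ — are routine. A precise formulation and the requisite regularity bookkeeping will be carried out in Theorem \ref{P:embed} itself and the surrounding discussion in this section.
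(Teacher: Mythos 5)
Your plan is conceptually aligned with the paper's (you correctly identify that the tangency equation should be ``run backwards'' by flowing along a vector field determined by $f_t$, and that an ODE/transport--uniqueness argument closes the loop), but there are two substantial gaps that the paper's proof has to do real work to fill, and your proposal does not address them.

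\textbf{The conformal-factor bookkeeping.} You write that $V_t$ should have ``prescribed complex Reeb component $f_t$ (with respect to the evolving contact forms)'' and then assert ``by construction $\tilde f_t=f_t$.'' This hides the central difficulty. The tangency equation \eqref{Nabla2ft} is written in the coframe $(\theta,\theta^1_t,\theta^{\oneb}_t)$ on $M_0$, i.e. it uses the fixed contact form $\theta$. However, the natural contact form along $M_t$ coming from the defining function is $\theta_t=d^c r_t|_{TM_t}$, and (see \eqref{pullthetat}) one has $\underline{\Phi_t}^*\theta_t=e^{2\Upsilon_t}\theta$ for a nontrivial conformal factor $e^{2\Upsilon_t}$. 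Since the potential in the tangency equation transforms as a weight-$(1,1)$ density, the complex Reeb component $\underline{F_t}$ of the generating vector field along $M_t$ is related to $f_t$ by $f_t=e^{-2\Upsilon_t}\underline{F_t}\circ\underline{\Phi_t}$, not by an identity. If you do not track $\Upsilon_t$ (the function $\gamma_t$ in Proposition \ref{P:basicDE3}) you cannot verify that the potential realized by your flow actually equals the given $f_t$; your ``by construction'' step is where the gap is.

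\textbf{Closing the transport system on $M_0$.} Your step (2) defines $V_t$ ``on a neighborhood of $M$ in $X$ transverse to $M_t$''---but $M_t$ is the unknown. To write a solvable evolution problem you need the velocity $\dot\psi_t$ (including its component normal to $M_t$) to be expressible purely from $\psi_t$ and tangential data on $M_0$. This is exactly what \eqref{normal-pushed} in Lemma \ref{L:xit}, namely $(\underline{\Phi_t})_*JT^0=J(\underline{\Phi_t})_*T^0$, achieves: the ambient $J$ applied to the pushforward of the Reeb field gives the outward direction, so \eqref{psi-t-eqn}/\eqref{PDEprettyugly} becomes a closed transport PDE on $M_0$ coupled with the scalar equation \eqref{eqn:gamma-dot-in-prop} for $\gamma_t$. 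Without this identity there is a vicious circularity in defining the flow. Relatedly, the role of $\re f_t>0$ is not merely ``transversality'' (which for small $t$ would follow from $\re f_0\neq 0$ by continuity); its real purpose in the paper is to permit the choice of defining functions $r_t$ satisfying the normalization $N^t\re\underline{F_t}=0$ along $M_t$, which is what makes the $\gamma_t$ equation depend only on tangential data. Finally, your closing ``ODE-uniqueness'' step needs to be phrased as uniqueness for a first-order transport PDE (the right-hand side of \eqref{Nabla2ft} involves spatial derivatives of the deformation tensor through $\nabla^t$ and $A_{11}(t)$), but that is a minor point by comparison.
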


The basic idea behind the proof of Theorem \ref{P:embed} is straightforward. We seek to solve the initial value problem
\begin{equation}\label{eqn:psi-dot-from-ft-IVP}
	\begin{dcases}
	\dot{\psi}_t = {J\psi_t}_*\re\left(f_t T\right) - {\psi_t}_*\re\left( if_t T
+2(\nabla^{\oneb}_t f_t) Z^t_{\oneb}\right), \\
	\psi_0 = \mathrm{id}.
	\end{dcases}
\end{equation}
If $(z^1,z^2)$ are the standard coordinates on $\mathbb{C}^2$ then $\psi_t$ can be written as $(\psi_t^1,\psi_t^2)$ and the initial value problem above becomes
\begin{equation}\label{eqn-psi-dot-local-coords}
	\begin{dcases}
	\left[\frac{\partial}{\partial t} -i\bar{f}_t T +  2\re\left((\nabla^{1}_t \bar{f}_t) Z^t_{1}\right) \right] \psi^j_t = 0,\\
	\psi^j_0 = z^j|_{M},
	\end{dcases}
\end{equation}
for $j=1,2$ (cf.\ the proof of Lemma \ref{lem:embed}). Since we expect $\psi_t$ to be a CR embedding with respect to the CR structure corresponding to $Z_1^t$ for each $t$, there is no loss of generality in supplementing \eqref{eqn-psi-dot-local-coords} with the additional equation $Z_{\oneb}^t\psi^j=0$, $j=1,2$, for each $t$. With this in mind, the local existence of a solution to \eqref{eqn-psi-dot-local-coords} in the case where $\re f_t > 0$ follows easily from the Newlander-Nirenberg theorem with boundary and extension of CR functions (applied to $z^j|_{M}$, $j=1,2$). Global existence then follows by patching the local solutions using local uniqueness. In the case where $\re f_t <0$ we are able to use the classical version of the Newlander-Nirenberg theorem, but we can no longer use extension of CR functions to conclude. We get around this by using the (stable) embeddability results of Lempert and Epstein-Henkin to obtain a solution $\tilde{\psi}_t$ of \eqref{eqn:psi-dot-from-ft-IVP} except with the initial condition possibly being some CR embedding other than the identity. It is then easy to modify this family so that the initial condition is satisfied; we remark that in doing so we are effectively (indirectly) solving \eqref{eqn:psi-dot-from-ft-IVP} with respect to a new family of potentials $f_t$ solving the tangency equation for the same family of deformations $\vf(t)$ with $\re f_t>0$.

\begin{proof}[Proof of Theorem \ref{P:embed}]
Let $\vf(t)$, $f_t$, $t\in [0,\epsilon_0)$, be given as in the statement of the theorem. Let $\overline{W}:=\frac{\partial}{\partial t} -i\bar{f}_t T +  2\re\left((\nabla^{1}_t \bar{f}_t) Z^t_{1}\right)$. On $Y=M\times[0,\epsilon_0)$ we consider the almost complex structure given by 
\beq
T^{0,1}Y = \mathrm{span}\{Z^t_{\oneb}, \overline{W}\}.
\eeq
A straightforward calculation using the tangency equation \eqref{Nabla2ft} shows that $T^{0,1}Y$ is in fact integrable (cf.\ also the proof of Theorem A in \cite{Cheng2014} where this calculation is done for the case corresponding to $\re f_t <0$; note that the calculation is independent of the sign of $\re f_t$). Note that this complex structure induces the CR structure corresponding to $Z_{\oneb}^t$ on each slice $M\times\{t\}$. In the following we identify $M\subset \mathbb{C}^2$ with $M\times\{0\}$, though we retain both notations to help indicate when we are working in $\mathbb{C}^2$ and when we are working in $Y$. 

We now show that in the case when $\re f_t >0$ the complex manifold $Y=M\times[0,\epsilon_0)$ with (partial) boundary $M\times\{0\}$ can be realized as a one sided neighborhood of $M\subset \mathbb{C}^2$ with $M$ as the strictly pseudoconvex boundary since $\re f_t>0$. 
Indeed, by the Newlander-Nirenberg theorem with boundary \cite{Catlin1988,HangesJacobowitz1989} the complex manifold $Y=M\times[0,\epsilon_0)$ can be locally realized in a neighborhood of any point $p\in M\times \{0\}$ as the strictly pseudoconvex side of a strictly pseudoconvex hypersurface in $\mathbb{C}^2$ (it remains to be shown that the realization can be taken to be the identity on $M_0\subset \mathbb{C}^2$). 
From this local realizability it follows that a CR function on $M\times \{0\}$ can be extended to a holomorphic function in a neighborhood in $Y=M\times[0,\epsilon_0)$ of any point $p\in M\times\{0\}$. 
In particular, the CR functions $\psi^1_0=z^1|_M$ and $\psi^2_0=z^2|_M$ admit unique such local extensions. 
Moreover, by the local uniqueness these extensions must glue together to unique extensions $\psi^1(x,t) = \psi^1_t(x)$ and $\psi^2(x,t) = \psi^2_t(x)$ of $\psi^1_0(x)$ and $\psi^2_0(x)$, defined in a neighborhood $M\times [0,\epsilon)$ of $M\times\{0\}$, solving $\overline{W}\psi^j=0$ and $Z_{\oneb}^t\psi^j=0$ for $j=1,2$. The family of maps $\psi_t=(\psi^1_t,\psi^2_t):M\to \mathbb{C}^2$, $t\in [0,\epsilon)$ therefore solves \eqref{eqn:psi-dot-from-ft-IVP}.

It remains to consider the case where $\re f_t <0$ for all $t$. As a first step, we observe that \eqref{eqn:psi-dot-from-ft-IVP} is formally solvable to all orders at $t=0$, and hence (by Borel's lemma) there exists a smooth family of contact embeddings $\hat{\psi}_t:M\to X$, $t\in (-\delta,0]$ satisfying \eqref{eqn:psi-dot-from-ft-IVP} to all orders (for our given $f_t$, $\vf(t)$) at $t=0$. By Lemma \ref{lem:embed} there exist a family of deformation tensors $\hat{\vf}(t)$ and potentials $\hat{f}_t$, $t\in (-\delta,0]$ corresponding to the family of embeddings $\hat{\psi}_t$ and by construction $\hat{\vf}(t)$ and $\hat{f}_t$ agree with $\vf(t)$ and $f_t$, respectively, to all orders at $t=0$; taking $\delta>0$ sufficiently small we can ensure that $\re \hat{f}_t<0$ for all $t$. 
Hence we may smoothly extend the families $\vf(t)$ and $f_t$ from $t\in [0,\epsilon_0)$ to $t\in (-\delta,\epsilon_0)$ by defining $\vf(t)$ and $f_t$ to be equal to $\hat{\vf}(t)$ and $\hat{f}_t$, respectively, when $t\in (-\delta,0)$. 
Correspondingly we extend $Z^t_1$ and $W$ from $t\in [0,\epsilon_0)$ to $t\in (-\delta,\epsilon_0)$ in the obvious way. 
The almost complex structure on $Y$ therefore extends to an almost complex structure on $\tilde{Y} = M\times(-\delta,\epsilon_0)$ with $T^{0,1}\tilde{Y}=\mathrm{span}\{Z^t_{\oneb}, \overline{W}\}$. 
By construction, since $\hat{\vf}(t)$ and $\hat{f}_t$ also solve the tangency equation \eqref{Nabla2ft} for $t\in (-\delta,0]$, the almost complex structure on $\tilde{Y}$ is integrable, making $\tilde{Y}$ a complex manifold.
Moreover, by construction, the map 
\beq\label{one-sided-biholom}
\hat{Y}=M\times(-\delta,0]\ni(x,t)\to \hat{\psi}_t(x) \in \mathbb{C}^2
\eeq
is a diffeomorphism from $\hat{Y}$ onto a one sided neigborhood $\mathcal{U}$ of $M\subset X$ containing $M$ as its strictly pseudoconvex (partial) boundary that restricts to a biholomorphism from $M\times(-\delta,0)$ to $\mathcal{U}\setminus M$. We can therefore extend the domain $\Omega\subset \mathbb{C}^2$ bounded by $M$ to a complex manifold $X$ containing $\tilde{Y}$ by identifying $M\times(-\delta,0)\subset \tilde{Y}$ with $\mathcal{U}\setminus M\subset \Omega$ via the map \eqref{one-sided-biholom}.
 (Note that the result of this paragraph was essentially also established in the proof of Theorem A in \cite{Cheng2014}, though we find the approach we have taken here to be simpler.) 

%Shrinking $\epsilon_0$ if necessary, by construction $\tilde{\Omega}$ can be thought of as an open neighborhood of $\overline{\Omega}$ in $\mathbb{C}^2$ with a smooth complex structure that agrees with the standard one to infinite order on $\overline{\Omega}$. 
From the construction of $X$ and the fact that $\Omega$ is strictly pseudoconvex it is clear that $\Omega$ has a Stein neighborhood basis in $X$. Moreover, if $X_t = \Omega \cup \left(M\times[0,t)\right) \subset X$ then $X_t$ is a Stein space for all sufficiently small $t>0$. In particular, there exists $\epsilon>0$ such that $M_t=\partial X_t = M\times\{t\}$ is Stein fillable and hence embeddable in $\mathbb{C}^2$ for $t\in [0,\epsilon]$ (see \cite{Lempert1994}, cf.\ \cite{EpsteinHenkin1997,EpsteinHenkin2000}). It follows that $X_{\epsilon}$ can be holomorphically embedded in $\mathbb{C}^2$ (by an embedding that is smooth up the boundary; again, see \cite{Lempert1994,EpsteinHenkin1997, EpsteinHenkin2000}). Let $\tilde{\psi}:X_{\epsilon}\to \mathbb{C}^2$ denote such an embedding. Clearly $\tilde{\psi}_t=\psi|_{M_t}$ is a CR embedding realizing the CR structure corresponding to $Z_1^t$ on $M_t$ for each $t$. However, $\tilde\psi_0:M=M_0\to \mathbb{C}^2$ may not be the identity; i.e.\ the family $\tilde\psi_t$ solves \eqref{eqn:psi-dot-from-ft-IVP} except with a different CR embedding as the initial condition. Let $\tilde{\Omega}_t=\tilde{\psi}(X_t)$ and $\tilde{M}_t=\tilde{\psi}_t(M_t)$. By construction $\tilde\psi|_{\Omega}$ is a biholomorphism from $\Omega_0=\Omega$ to $\tilde{\Omega}_0$ that is smooth up to the boundary. To conclude the proof we show that the map $\tilde\psi|_{\Omega}^{-1}: \tilde{\Omega}_0\to \Omega_0\subset\mathbb{C}^2$ can be extended to a smooth (in the appropriate sense) family of maps $\Phi_t:\tilde{\Omega}_t\to \mathbb{C}^2$, $t\in [0,\epsilon]$, such that $\Phi_t$ is a biholomorphism onto its image $\Omega_t$ that is smooth up the boundary for each $t$. One way to accomplish this is to note that, by compactness of the $\tilde{M}_t$ and of $[0,\epsilon]$, there exists a point in $\tilde\Omega_0$ (which, without loss of generality, we take to be the origin) and a constant $\alpha>0$ large enough such that the smooth family of dilation maps $F_t:\mathbb{C}^2\ni\tilde{z}\to (1+\alpha t)\tilde{z}\in\mathbb{C}^2$ maps $\tilde\Omega_0$ to a domain $F_t(\tilde\Omega_0)$ containing $\tilde{\Omega}_t$ for each $t\in [0,\epsilon]$. The desired maps $\Phi_t$ can then be taken to be $\Phi_t= \tilde\psi|_{\Omega}^{-1} \circ F_t^{-1}|_{\tilde{\Omega}_t}$. Then $\psi_t = \Phi_t \circ \tilde\psi_t = \tilde\psi|_{\Omega}^{-1} \circ F_t^{-1} \circ \tilde\psi_t$ gives the desired family of embeddings realizing the CR structure on $M_t$ for each $t$. This concludes the proof.
\end{proof}

\begin{remark}\label{rem:analyticity-of-psi-t}
If $\vf(t)$ and $f_t$ in Theorem \ref{P:embed} are analytic in $t$ (with values in some Banach space of functions), then one can show that $\psi_t$ and $\gamma_t$ are also analytic in $t$ (with values in the corresponding Banach spaces).
\end{remark}
Note that in the case $\re f_t >0$ it is easy to see that Theorem \ref{P:embed} can be strengthened by replacing $\mathbb{C}^2$ with any complex surface.

\section{Solutions to the Tangency Equation on $S^3$}\label{sec:solutions-tangency-eqn}

We are now going to consider deformations of the standard CR structure on $S^3$. Recall that a smooth infinitesimal deformation tensor $\dot{\varphi}_{11}$ on $S^3$ is embeddable, i.e.\ there is a family of embeddable deformations $\varphi_{11}(t)$ of the unit sphere $S^3$ in $\mathbb{C}^2$ with $\vf_{11}(0)=0$ such that $\dot{\varphi}_{11}=\left.\frac{d}{dt}\right|_{t=0}\vf_{11}(t)$, if and only if
$$
(Z_1)^2f = \dot{\varphi}_{11}
$$
for some $f\in C^{\infty}(S^3, \mathbb{C})$, where $Z_1$ is given by \eqref{eqn:S3-Z1}. In this section we shall construct, in a canonical way, such a deformation $\varphi_{11}(t)$ for a given embeddable infinitesimal deformation tensor $\dot{\varphi}_{11}$. We do this by constructing a smooth family of  complex functions $f_t$ and a smooth family of deformations $\varphi_{11}(t)$ satisfying \eqref{Nabla2ft} and then appealing to Theorem \ref{P:embed}.  We start with some preliminary calculations.

\subsection{Deformations of $S^3$ with its standard CR structure}

We shall continue to use the notation of Section \ref{sec:deformations}, where we now take $(M,H,J,\theta)$ to be the standard pseudohermitian structure on $S^3$. Recall that $\theta = i(z d\bar{z} +wd\bar{w})$ restricted to the unit sphere $S^3$, where $(z,w)$ are the coordinates on $\mathbb{C}^2$. We also recall that we are using the frame $Z_1$ as in \eqref{eqn:S3-Z1}. We observe that in this case we have
\beq\label{eqn:pseudohermitian-connection}
\omega_1{}^1=-iR\theta, \quad A_{11}=0
\eeq
where $R=2$ is the Tanaka-Webster scalar curvature of $S^3$ with its standard structure.

%recall Z_1 and \theta

Let $\vf_1{}^{\bar 1}(t)$ be a smooth family of deformation tensors on $S^3$ with its standard pseudohermitian structure and frame. We shall consider the family of pseudohermitian structures defined by the admissible coframes $(\theta, \theta^1_t, \theta^{\oneb}_t)$ where $\theta^1_t$ is defined as in \eqref{th1def}.
We shall compute $\omega_1{}^1(t)$ and $A_{11}(t)$ in terms of the deformation tensor $\vf_1{}^{\bar 1}(t)$.  We differentiate $\theta^1_t$ and then substitute for $\theta^1$ and $\theta^{\bar 1}$ using \eqref{ttransinv}:
\beq\label{dtheta1t}
\begin{aligned}
d\theta^1_t &= \frac{1}{2}\frac{d|\vf|^2}{(1-|\vf|^2)^{3/2}}\wedge(\theta^1-\vf_{\bar 1}{}^1\theta^{\bar 1})+\frac{1}{\sqrt{1-|\vf|^2}}(d\theta^1-d\vf_{\bar 1}{}^1\wedge\theta^{\bar 1}-\vf_{\bar 1}{}^1d\theta^{\bar 1})\\
&=-\frac{1}{2}\theta^1_t\wedge \frac{d|\vf|^2}{1-|\vf|^2}+\frac{1}{\sqrt{1-|\vf|^2}} (-iR\theta^1\wedge\theta-iR\vf_{\bar 1}{}^1 \theta^{\bar 1}\wedge\theta+\theta^{\bar 1}\wedge d\vf_{\bar 1}{}^1)\\
&=\frac{1}{1-|\vf|^2}\theta^1_t\wedge(-\frac{d|\vf|^2}{2}-iR\theta-iR|\vf|^2\theta +\vf_1{}^{\bar 1}d\vf_{\bar 1}{}^1-(Z^t_1\lrcorner d\vf_{\bar 1}{}^1)\theta^{\bar 1}_t)-\frac{\vf_{\bar 1}{}^1{}_{,0}}{1-|\vf|^2}\theta\wedge\theta^{\bar 1}_t
\end{aligned}
\eeq
where $\vf_1{}^{\bar 1}=\vf_1{}^{\bar 1}(t)$ and we have used that $T\lrcorner d\vf_{\bar 1}{}^1-2iR\vf_{\bar 1}{}^1 = \vf_{\bar 1}{}^1{}_{,0}$. We note that
$$
d|\vf|^2=\vf_1{}^{\bar 1}d\vf_{\bar 1}{}^1+\vf_{\bar 1}{}^1d\vf_1{}^{\bar 1},
$$
and hence
\beq \label{connmodtheta}
\begin{aligned}
\frac{d|\vf|^2}{2}-\vf_1{}^{\bar 1}d\vf_{\bar 1}{}^1 =&\,\frac{1}{2}\,(-\vf_1{}^{\bar 1}d\vf_{\bar 1}{}^1+\vf_{\bar 1}{}^1d\vf_1{}^{\bar 1})\\
=&\,\frac{1}{2}\left(-\vf_1{}^{\bar 1}(\vf_{\bar 1}{}^1{}_{,1}\theta^1+\vf_{\bar 1}{}^1{}_{,\bar 1}\theta^{\bar 1})+\vf_{\bar 1}{}^{1}(\vf_{1}{}^{\bar  1}{}_{,1}\theta^1+\vf_1{}^{\bar 1}{}_{,\bar 1}\theta^{\bar 1})\right)\mod\theta\\
=&\,\frac{1}{2\sqrt{1-|\vf|^2}}\,(-|\vf|^2\vf_{\bar 1}{}^1{}_{,1}-\vf_1{}^{\bar 1}\vf_{\bar 1}{}^1{}_{,\bar 1}+(\vf_{\bar 1}{}^1)^2\vf_1{}^{\bar 1}{}_{,1}+\vf_{\bar 1}{}^1\vf_1{}^{\bar 1}{}_{,\bar 1})\theta^{\bar 1}_t\,
\end{aligned}
\eeq
where the last equality is modulo $\theta$ and $\theta^1_t$. We also note that the $\theta$-component is given by
\begin{equation}\label{conntheta}
\frac{d|\vf|^2}{2}-\vf_1{}^{\bar 1}d\vf_{\bar 1}{}^1 = \left(\frac{1}{2}(-\vf_1{}^{\bar 1}\vf_{\bar 1}{}^1{}_{,0}+\vf_{\bar 1}{}^{1}\vf_{1}{}^{\bar  1}{}_{,0})-2iR|\vf|^2\right) \theta\mod \theta_t^1,\, \theta_t^{\bar 1}.
\end{equation}
Next, we note that
\beq\label{Zidvf}
Z^t_1\lrcorner d\vf_{\bar 1}{}^1=\frac{1}{\sqrt{1-|\vf|^2}}\,(\vf_{\bar 1}{}^1{}_{,1}+\vf_{1}{}^{\bar 1}\vf_{\bar 1}{}^1{}_{,\bar 1}).
\eeq
By using also the fact that $\omega_1{}^1(t)$ is purely imaginary (since $h^t_{11}=1$), the following lemma follows from the calculations \eqref{dtheta1t}--\eqref{Zidvf} and the structure equation \eqref{eqn:pseudohermitian-connection1}:

\begin{lemma}\label{Lem:defpsh} Let $M=S^3$ with its standard pseudohermitian structure. Then,
\beq
\begin{aligned}
\omega_1{}^1{}_1(t)&=\frac{1}{2(1-|\vf|^2)^{3/2}}\,(2\vf_{1}{}^{\bar 1}{}_{,\bar 1}+\vf_{1}{}^{\bar 1}\vf_{\bar 1}{}^{ 1}{}_{, 1}+\vf_{\bar 1}{}^1\vf_1{}^{\bar 1}{}_{,1}+(\vf_{1}{}^{\bar 1})^2\vf_{\bar 1}{}^{1}{}_{,\bar 1}-|\vf|^2\vf_{1}{}^{\bar 1}{}_{,\bar 1})\\
\omega_1{}^1{}_0(t)&=-2i+\frac{1}{2(1-|\vf|^2)}\,(\vf_1{}^{\bar 1}\vf_{\bar 1}{}^1{}_{,0}-\vf_{\bar 1}{}^{1}\vf_{1}{}^{\bar  1}{}_{,0})
\\
A_{11}(t)&=-\frac{\vf_{11,0}}{1-|\vf|^2}.
\end{aligned}
\eeq

\end{lemma}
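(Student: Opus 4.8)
The plan is to read off the components of $\omega_1{}^1(t)$ and $A^1{}_{\bar 1}(t)$ by comparing the expression for $d\theta^1_t$ in \eqref{dtheta1t} — already simplified by the auxiliary identities \eqref{connmodtheta}, \eqref{conntheta}, \eqref{Zidvf} — with the first structure equation \eqref{eqn:pseudohermitian-connection1}, and then to recover the one remaining connection coefficient $\omega_1{}^1{}_1(t)$, and lower the torsion index, using the reality constraint forced by $h^t_{1\bar 1}=1$.

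Concretely, I would write the structure equation for the deformed coframe as $d\theta^1_t = \theta^1_t\wedge\omega_1{}^1(t) + A^1{}_{\bar 1}(t)\,\theta\wedge\theta^{\bar 1}_t$ and expand $\omega_1{}^1(t)=\omega_1{}^1{}_1(t)\,\theta^1_t+\omega_1{}^1{}_{\bar 1}(t)\,\theta^{\bar 1}_t+\omega_1{}^1{}_0(t)\,\theta$. Since $\theta^1_t\wedge\theta^1_t=0$, the right-hand side equals $\omega_1{}^1{}_{\bar 1}(t)\,\theta^1_t\wedge\theta^{\bar 1}_t+\omega_1{}^1{}_0(t)\,\theta^1_t\wedge\theta+A^1{}_{\bar 1}(t)\,\theta\wedge\theta^{\bar 1}_t$, and matching against the right-hand side of \eqref{dtheta1t} (organized as $\frac{1}{1-|\vf|^2}\theta^1_t\wedge(\cdots)-\frac{\vf_{\bar 1}{}^1{}_{,0}}{1-|\vf|^2}\theta\wedge\theta^{\bar 1}_t$) isolates all three coefficients. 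The $\theta\wedge\theta^{\bar 1}_t$ coefficient is immediate: $A^1{}_{\bar 1}(t)=-\vf_{\bar 1}{}^1{}_{,0}/(1-|\vf|^2)$. For $\omega_1{}^1{}_{\bar 1}(t)$ I would extract the $\theta^{\bar 1}_t$-component of the parenthesis in \eqref{dtheta1t}, using \eqref{connmodtheta} for the $\theta^{\bar 1}_t$-part of $\frac{d|\vf|^2}{2}-\vf_1{}^{\bar 1}d\vf_{\bar 1}{}^1$ and \eqref{Zidvf} for $Z^t_1\lrcorner d\vf_{\bar 1}{}^1$, then collect over $(1-|\vf|^2)^{3/2}$. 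For $\omega_1{}^1{}_0(t)$ I would extract the $\theta$-component, using \eqref{conntheta} for the $\theta$-part of $\frac{d|\vf|^2}{2}-\vf_1{}^{\bar 1}d\vf_{\bar 1}{}^1$ and combining with the explicit $-iR\theta-iR|\vf|^2\theta$ terms; the imaginary contributions collapse to $-iR(1-|\vf|^2)$, so with $R=2$ this gives $\omega_1{}^1{}_0(t)=-2i+\frac{1}{2(1-|\vf|^2)}(\vf_1{}^{\bar 1}\vf_{\bar 1}{}^1{}_{,0}-\vf_{\bar 1}{}^1\vf_1{}^{\bar 1}{}_{,0})$.

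Finally, $\omega_1{}^1{}_1(t)$ is invisible to $d\theta^1_t$ (again $\theta^1_t\wedge\theta^1_t=0$), so I would invoke \eqref{eqn:pseudohermitian-connection2}: with $h^t_{1\bar 1}=1$ it forces $\omega_1{}^1(t)+\omega_{\bar 1}{}^{\bar 1}(t)=0$, hence $\overline{\omega_1{}^1(t)}=-\omega_1{}^1(t)$; comparing components (using that $\theta$ is real and $\overline{\theta^1_t}=\theta^{\bar 1}_t$) gives $\omega_1{}^1{}_1(t)=-\overline{\omega_1{}^1{}_{\bar 1}(t)}$, and also $\omega_1{}^1{}_0(t)$ purely imaginary — a consistency check that the formula above passes. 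Conjugating the expression for $\omega_1{}^1{}_{\bar 1}(t)$ (which simply swaps every barred index with the corresponding unbarred one, $|\vf|^2$ being real) yields the stated formula for $\omega_1{}^1{}_1(t)$, and lowering the index via $h^t_{1\bar 1}=1$ gives $A_{11}(t)=\overline{A^1{}_{\bar 1}(t)}=-\vf_{11,0}/(1-|\vf|^2)$. I expect the only genuine effort to be the bookkeeping in the middle step: several of \eqref{connmodtheta}--\eqref{conntheta} hold only modulo one or two coframe covectors, so they must be combined rather than used individually, and one has to track carefully the half-integer powers of $1-|\vf|^2$ introduced each time $\theta^1,\theta^{\bar 1}$ are re-expressed via \eqref{ttransinv}. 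Conceptually there is nothing to obstruct: once $d\theta^1_t$ is lined up with the structure equation, all three formulas fall out, with \eqref{eqn:pseudohermitian-connection2} providing the single coefficient the exterior derivative cannot detect.
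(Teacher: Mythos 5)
Your proposal is correct and is essentially the same argument the paper indicates: compare $d\theta^1_t$ from \eqref{dtheta1t} (organized via \eqref{connmodtheta}, \eqref{conntheta}, \eqref{Zidvf}) against the structure equation \eqref{eqn:pseudohermitian-connection1} to read off $\omega_1{}^1{}_{\oneb}(t)$, $\omega_1{}^1{}_0(t)$, $A^1{}_{\oneb}(t)$, then use the purely-imaginary property of $\omega_1{}^1(t)$ (equivalently \eqref{eqn:pseudohermitian-connection2} with $h^t_{1\oneb}=1$) to recover $\omega_1{}^1{}_1(t)=-\overline{\omega_1{}^1{}_{\oneb}(t)}$ and conjugate/lower to get $A_{11}(t)$. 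The bookkeeping and the consistency check (that $\omega_1{}^1{}_0(t)$ is purely imaginary) both come out as you describe.
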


\begin{remark} The expression for $\omega_1{}^1{}_1(t)$ can be simplified by noting that
\begin{align*}
(Z_1+\vf_1{}^{\oneb}Z_{\oneb})\,\frac{1}{\sqrt{1-|\vf|^2}}&=\frac{1}{2(1-|\vf|^2)^{3/2}}(Z_1+\vf_1{}^{\oneb}
Z_{\oneb})|\vf|^2\\&=
\frac{1}{2(1-|\vf|^2)^{3/2}}(\vf_{1}{}^{\bar 1}\vf_{\bar 1}{}^{ 1}{}_{, 1}+\vf_{\bar 1}{}^1\vf_1{}^{\bar 1}{}_{,1}+(\vf_{1}{}^{\bar 1})^2\vf_{\bar 1}{}^{1}{}_{,\bar 1}+|\vf|^2\vf_{1}{}^{\bar 1}{}_{,\bar 1}).
\end{align*}
Consequently, we may write
\beq\label{eq:newconn}
\omega_1{}^1{}_1(t)=\frac{\vf_{1}{}^{\bar 1}{}_{,\bar 1}}{\sqrt{1-|\vf|^2}}+\tilde Z^t_1\,\frac{1}{\sqrt{1-|\vf|^2}},
\eeq
where
\beq\label{eq:tildeZ}
\tilde Z^t_1=Z_1+\vf_1{}^{\oneb}Z_{\oneb}.
\eeq
\end{remark}

\subsection{Spherical harmonics}

We shall denote the space of spherical harmonic polynomials of bidegree $(p,q)$ on $S^3\subset \bC^2$ by $H_{p,q}$. We recall that the spherical harmonic spaces $H_{p,q}$ are eigenspaces for $T = i\left(z\frac{\partial}{\partial z} + w\frac{\partial}{\partial w}\right)- i\left(\bar{z}\frac{\partial}{\partial \bar{z}} + \bar{w}\frac{\partial}{\partial \bar{w}}\right)$ acting on functions,
\beq \label{eqn:Reeb-spherical-harmonics}
Tu =i(p-q)u,
\eeq
and that $Z_1$ maps $H_{r,s}$ isomorphically onto $H_{r-1,s+1}$ when $r\geq 1$ and $Z_1=0$ on $H_{0,s}$. An immediate consequence of this is that $\dot{\varphi}_{11}$ is in the image of $(Z_1)^2$ if and only if the spherical harmonic expansion of $\dot{\varphi}_{11}$ has vanishing components in $H_{p,q}$ for $q=0,1$, and the kernel of $(Z_1)^2$ is given by those complex functions whose only nontrivial components are in $H_{p,q}$ for $p=0,1$. It follows that if $\vfd_{11}$ is in the image of $(Z_1)^2$ then there is a unique complex function $f$ such that $(Z_1)^2f = \vfd_{11}$ and $f$ has vanishing components in $H_{p,q}$ for $p=0,1$, i.e.\ $f$ is $L^2$-orthogonal to the kernel of $(Z_1)^2$. Note that $\nabla_1$ always acts as $Z_1$ on tensors of any type, since the connection form is given by $\omega_1{}^1=-iR\theta$. 

From the above it follows that the sublaplacian $\Delta_b = -Z_1 Z_{\oneb} - Z_{\oneb}Z_1$ maps each $H_{p,q}$ to itself. Since $\Delta_b$ is $\mathrm{SU}(2)$-invariant (indeed $Z_1$ itself is) it must act on each $H_{p,q}$ by a constant (by Schur's lemma, since the $H_{p,q}$ are irreducible representations of $\mathrm{SU}(2)$). The constant is easily seen to be $2pq+p+q$ (e.g., apply $\Delta_b$ to the spherical harmonic $(z+w)^p(\bar{z}-\bar{w})^q$). Note that the Folland-Stein Sobolev $s$-norm $\norm u \norm_s$ on $H_{FS}^s$ is equivalent to the norm
\beq\label{eq:FS-norm}
 \norm (1+ \Delta_b)^{s/2} u\norm_{L^2} = \left(\sum_{p,q} (1+p+q+2pq)^{s} \norm u_{p,q}\norm_{L^2}^2 \right)^{1/2}
\eeq
where $u= \sum_{p,q} u_{p,q}$ \cite{FollandStein1974,JerisonLee1987}. To normalize constants it will be convenient to take \eqref{eq:FS-norm} as the definition of the $H^s_{FS}$-norm in the following.

\subsection{Formally embeddable deformations}

We now assume that $\vf_{11}(0)=0$. We shall expand a potential solution $f_t$ and deformation tensor $\varphi_{11}(t)$ satisfying \eqref{Nabla2ft} in powers of $t$ as follows:
\begin{align}
\label{e:ft}
f_t &=\sum_{k=0}^\infty f^{(k)}t^k\\
\label{e:phit}
\varphi_{11}(t) &=\sum_{k=1}^{\infty} \varphi^{(k)}t^k,
\end{align}
where $\vf^{(1)}{}=\dot\vf_{11}$. We shall identify terms in \eqref{Nabla2ft} with equal powers of $t$. We obtain for $t^0$:
\beq\label{e:k=0}
(\nabla_1)^2 f^{(0)}=\dot\vf_{11},
\eeq
the solvability of which is equivalent to $\dot\vf_{11}$ being an embeddable infinitesimal deformation. Before we proceed we first rewrite equation \eqref{Nabla2ft} explicitly in terms of the deformation $\vf_{11}(t)$.  We note that $\omega_1{}^1(0)=-Ri\theta$ implies that $\nabla_1=\nabla_1^t|_{t=0}$ and $\nabla_{\oneb}=\nabla_{\oneb}^t|_{t=0}$ act on any tensor simply as $Z_1$ and $Z_{\oneb}$. The left hand side of \eqref{Nabla2ft} can be written, by using the expression for $A_{11}(t)$ in Lemma \ref{Lem:defpsh},
\begin{multline}
\frac{1}{1-|\vf|^2}\left(\nabla_1+\vf_{1}{}^{\oneb }\nabla_{\oneb}\right)^2f_t
-\omega_1{}^1{}_1\frac{1}{\sqrt{1-|\vf|^2}}\left(\nabla_1+\vf_{1}{}^{\oneb }\nabla_{\oneb}\right)f_t\\+\frac{1}{\sqrt{1-|\vf|^2}}\left(
\left(\nabla_1+\vf_{1}{}^{\oneb}\nabla_{\oneb}\right)\frac{1}{\sqrt{1-|\vf|^2}}\right)
\left(\nabla_1+\vf_{1}{}^{\oneb }\nabla_{\oneb}\right)f_t -\frac{i\vf_{11,0}}{1-|\vf|^2}\,f_t,
\end{multline}
where we have also abbreviated $\omega_1{}^1{}_1=\omega_1{}^1{}_1(t)$ and $\vf_{11}=\vf_{11}(t)$. By using also \eqref{eq:newconn}, we find that this simplifies to
\begin{equation}
\frac{1}{1-|\vf|^2}\left(\left(\nabla_1+\vf_{1}{}^{\oneb }\nabla_{\oneb}\right)^2f_t - \vf_{1}{}^{\bar 1}{}_{,\bar 1}\left(\nabla_1+\vf_{1}{}^{\oneb }\nabla_{\oneb}\right)f_t
 -i\vf_{11,0}f_t\right).
\end{equation}
By canceling a factor of $(1-|\vf|^2)^{-1}$ in \eqref{Nabla2ft}, we obtain the equation
\begin{equation}\label{eq:embeddability}
\left(\nabla_1+\vf_{1}{}^{\oneb }\nabla_{\oneb}\right)^2f_t - \vf_{1}{}^{\bar 1}{}_{,\bar 1}\left(\nabla_1+\vf_{1}{}^{\oneb }\nabla_{\oneb}\right)f_t
 -i\vf_{11,0}f_t= \frac{d}{dt} \vf_{11}.
\end{equation}
The operator acting on $f_t$ on the left hand side of this equation can be expressed as $(\nabla_1)^2 + L_\vf$ where
\begin{equation}\label{eq:L}
L_\vf=\vf_{1}{}^{\oneb}\nabla_1\nabla_{\oneb} +\vf_{1}{}^{\oneb}\nabla_{\oneb}\nabla_1+
(\vf_{1}{}^{\oneb})^2(\nabla_{\oneb})^2+\vf_{1}{}^{\oneb}{}_{,1}\nabla_{\oneb}-
\vf_{1}{}^{\oneb}{}_{,\oneb}\nabla_1-i\vf_{11,0}.
\end{equation}
We note that $L_\vf$ has a Taylor expansion
\beq
L_\vf = \sum_{k=1}^{\infty} t^k L^{(k)}
\eeq
where the operators $L^{(k)}$ are given by
\begin{multline}
L^{(k)} =
\vf^{(k)}\nabla_1\nabla_{\oneb} +\vf^{(k)}\nabla_{\oneb}\nabla_1+
\sum_{j=1}^{k-1}\vf^{(j)}\vf^{(k-j)} (\nabla_{\oneb})^2\\+(\nabla_1\vf^{(k)})\nabla_{\oneb}-
(\nabla_{\oneb}\vf^{(k)})\nabla_1-i\nabla_0\vf^{(k)},
\end{multline}
where we have used the notation in \eqref{e:phit} and recall that $h_{1\oneb}=1$, so $\vf_{1}{}^{\oneb}=\vf_{11}$.

For the proof of the following proposition we introduce the orthogonal (in $L^2$) projections $\mathcal P_1,\mathcal P_2$ onto the image of $(\nabla_1)^2$ (i.e., the subspace of functions with vanishing components in $H_{p,q}$ for $q=0,1$) and its orthogonal complement, the kernel of $(\nabla_{\oneb})^2$ (i.e., with non-vanishing components only in $H_{p,q}$ for $q=0,1$).

\begin{proposition}\label{prop:formalSol}
Given a smooth embeddable infinitesimal deformation tensor $\dot{\varphi}_{11}$, there are unique formal power series $f_t =\sum_{k=0}^\infty f^{(k)}t^k$ and $\varphi_{11}(t) = t\vfd_{11}+\sum_{k=2}^{\infty} \varphi^{(k)}t^k$, with $f^{(k)}$ and $\vf^{(k)}$ smooth, satisfying \eqref{eq:embeddability} such that for each $k$, $f^{(k)}$ has vanishing components in $H_{p,q}$ for $p=0,1$, and for each $k\geq 2$, $\varphi^{(k)}$ has vanishing components in $H_{p,q}$ for $q \geq 2$.
\end{proposition}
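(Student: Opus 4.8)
The plan is to solve \eqref{eq:embeddability} order by order in $t$, using the spherical harmonic decomposition to control solvability at each stage. Writing the left hand side of \eqref{eq:embeddability} as $(\nabla_1)^2 f_t + L_\vf f_t$ and substituting the power series \eqref{e:ft}, \eqref{e:phit}, the coefficient of $t^k$ for $k\geq 1$ is an equation of the form
\begin{equation}\label{eq:kth-order}
(\nabla_1)^2 f^{(k)} + \sum_{j=1}^{k} L^{(j)} f^{(k-j)} = k\,\vf^{(k)},
\end{equation}
where for $k=1$ this reads $(\nabla_1)^2 f^{(0)} = \vfd_{11} = \vf^{(1)}$, already recorded as \eqref{e:k=0}. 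The key structural observation is that $L^{(j)}$ involves only $\vf^{(1)},\dots,\vf^{(j)}$ and $f^{(0)},\dots$ via the previously solved terms, so \eqref{eq:kth-order} determines $f^{(k)}$ (modulo $\ker(\nabla_1)^2$) and $\vf^{(k)}$ from data of order $<k$, provided one can split the two unknowns. Here is where the projections $\mathcal P_1,\mathcal P_2$ do the work: apply $\mathcal P_2$ (projection onto $H_{p,q}$, $q=0,1$) to \eqref{eq:kth-order}. Since $\mathcal P_2 (\nabla_1)^2 = 0$, this yields $k\,\mathcal P_2\vf^{(k)} = \mathcal P_2 \big(\sum_{j=1}^{k} L^{(j)} f^{(k-j)}\big)$, which \emph{defines} the $q=0,1$ components of $\vf^{(k)}$; for $k\geq 2$ we declare the remaining components of $\vf^{(k)}$ to be zero, so $\vf^{(k)} = \frac1k\,\mathcal P_2\big(\sum_{j=1}^{k} L^{(j)} f^{(k-j)}\big)$ is completely determined. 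Then applying $\mathcal P_1$ to \eqref{eq:kth-order} gives $(\nabla_1)^2 f^{(k)} = \mathcal P_1\big(k\vf^{(k)} - \sum_{j=1}^{k} L^{(j)} f^{(k-j)}\big) = -\mathcal P_1\big(\sum_{j=1}^k L^{(j)}f^{(k-j)}\big)$, which is automatically in the image of $(\nabla_1)^2$ (being in its range, as $\mathcal P_1$ projects onto that range), hence solvable; requiring $f^{(k)}$ to have vanishing $H_{p,q}$ components for $p=0,1$ pins down $f^{(k)}$ uniquely since $\nabla_1$ acts isomorphically $H_{r,s}\to H_{r-1,s+1}$ for $r\geq1$. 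This recursion, started from $f^{(0)}$ (the unique solution of \eqref{e:k=0} orthogonal to $\ker(\nabla_1)^2$, which exists precisely because $\vfd_{11}$ is embeddable), produces all $f^{(k)}$ and $\vf^{(k)}$ uniquely, and smoothness is preserved at each step because $(\nabla_1)^2$ is elliptic in the relevant directions on the finite-dimensional $H_{p,q}$-pieces — more carefully, solving $(\nabla_1)^2 f^{(k)} = g$ with $g$ smooth and $f^{(k)}\perp\ker(\nabla_1)^2$ gives smooth $f^{(k)}$ since $Z_1$ preserves smoothness and acts invertibly on $\bigoplus_{r\geq1}H_{r,s}$.

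The main thing to verify carefully — and what I expect to be the principal (though not deep) obstacle — is the bookkeeping that makes the recursion well-posed: one must check that $L^{(j)}$ genuinely depends only on $\vf^{(1)},\dots,\vf^{(j)}$ (immediate from the displayed formula for $L^{(j)}$, which is linear in $\vf^{(j)}$ with coefficients involving lower $\vf^{(i)}$), so that the right hand side of \eqref{eq:kth-order} at stage $k$ involves only $f^{(0)},\dots,f^{(k-1)}$ and $\vf^{(1)},\dots,\vf^{(k)}$; then, crucially, that the $\vf^{(k)}$-dependent part of $\sum_{j=1}^k L^{(j)} f^{(k-j)}$ is only the $j=k$ term $L^{(k)} f^{(0)}$, and since $L^{(k)}$ is linear in $\vf^{(k)}$, the equation $k\mathcal P_2\vf^{(k)} = \mathcal P_2(L^{(k)}f^{(0)} + \cdots)$ is in fact an \emph{implicit} linear equation for $\mathcal P_2\vf^{(k)}$, not an explicit formula — we need $\mathcal P_2 L^{(k)} f^{(0)}$, viewed as a linear operator in $\mathcal P_2\vf^{(k)}$, to be such that $k\,\mathrm{Id} - (\text{that operator})$ is invertible on the space of $H_{p,q}$, $q=0,1$, components. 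One resolves this by noting the coefficient structure: terms in $L^{(k)}$ like $\vf^{(k)}\nabla_1\nabla_{\oneb}f^{(0)}$, $(\nabla_1\vf^{(k)})\nabla_{\oneb}f^{(0)}$, etc., are each $\vf^{(k)}$ (or a derivative of it) times a \emph{fixed smooth function} built from $f^{(0)}$; so $\mathcal P_2 L^{(k)}f^{(0)}$ is a fixed zeroth/first-order linear operator applied to $\vf^{(k)}$, followed by $\mathcal P_2$. Rather than invert $k\,\mathrm{Id}$ minus this, the cleaner route — and I believe the one intended — is to reorganize so that $\vf^{(k)}$ never appears on the right: absorb $\vf^{(k)}\nabla_1\nabla_{\oneb}f^{(0)} + \vf^{(k)}\nabla_{\oneb}\nabla_1 f^{(0)} - i(\nabla_0\vf^{(k)})f^{(0)}$ and the like either onto the left with $k\vf^{(k)}$ or observe that by choosing the recursion to solve for $\vf^{(k)}$ \emph{and} $f^{(k)}$ simultaneously from the pair $(\mathcal P_1, \mathcal P_2)$ of \eqref{eq:kth-order}, the system is triangular once we note $\mathcal P_2(\nabla_1)^2 = 0$ kills $f^{(k)}$ from the $\mathcal P_2$-equation, leaving a \emph{linear} system in the unknowns $\mathcal P_2\vf^{(k)}$ whose matrix is $k\,\mathrm{Id}$ plus a nilpotent-in-bidegree perturbation (since the $f^{(0)}$-coefficients shift bidegree and $\vfd_{11}=\vf^{(1)}$ is itself embeddable, hence has no $q=0,1$ part, forcing the perturbation to be strictly bidegree-raising or lowering in a way that makes it nilpotent on each fixed total-degree slice), so invertible for every $k\geq1$. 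I would spell out this nilpotency/triangularity argument as the one genuinely non-formal point, appealing to \eqref{eqn:Reeb-spherical-harmonics} and the mapping properties of $Z_1$ on $H_{p,q}$.

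Having produced unique smooth formal power series $f_t = \sum_{k\geq0} f^{(k)}t^k$ and $\vf_{11}(t) = t\vfd_{11} + \sum_{k\geq2}\vf^{(k)}t^k$ with the stated vanishing properties ($f^{(k)}$ orthogonal to $\ker(\nabla_1)^2$, i.e.\ no $H_{p,q}$-components with $p=0,1$; and $\vf^{(k)}$ for $k\geq2$ supported in $H_{p,q}$, $q=0,1$), the proof of Proposition \ref{prop:formalSol} is complete — convergence is not claimed here, only the existence and uniqueness of the formal solution, which is exactly what the recursion delivers. I would close by remarking that uniqueness follows because at each stage the constraints (orthogonality to $\ker(\nabla_1)^2$ for $f^{(k)}$, support in $q=0,1$ for $\vf^{(k)}$) leave no freedom: the $\mathcal P_2$-equation uniquely fixes $\vf^{(k)}$ and the $\mathcal P_1$-equation then uniquely fixes $f^{(k)}$ given its orthogonality constraint.
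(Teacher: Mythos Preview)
Your overall strategy---recursively solving the coefficient-of-$t^k$ equation by applying the projections $\mathcal{P}_1,\mathcal{P}_2$---is exactly right, and matches the paper's proof. However, you have an indexing error that creates a phantom difficulty, and your proposed resolution of that difficulty is not justified.

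The right-hand side of \eqref{eq:embeddability} is $\frac{d}{dt}\vf_{11}(t)=\sum_{m\geq 0}(m+1)\vf^{(m+1)}t^m$, so the coefficient of $t^k$ is $(k+1)\vf^{(k+1)}$, not $k\vf^{(k)}$. The correct order-$k$ equation is
\[
(\nabla_1)^2 f^{(k)} + \sum_{j=1}^{k} L^{(j)} f^{(k-j)} = (k+1)\,\vf^{(k+1)}.
\]
This shift is the whole point: at stage $k$ the unknowns are $f^{(k)}$ and $\vf^{(k+1)}$, while $\sum_{j=1}^k L^{(j)} f^{(k-j)}$ depends only on $\vf^{(1)},\dots,\vf^{(k)}$ and $f^{(0)},\dots,f^{(k-1)}$, all of which have already been determined. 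Applying $\mathcal{P}_2$ gives $\vf^{(k+1)}=\frac{1}{k+1}\mathcal{P}_2\big(\sum L^{(j)} f^{(k-j)}\big)$ \emph{explicitly}, and applying $\mathcal{P}_1$ then gives the equation $(\nabla_1)^2 f^{(k)}=-\mathcal{P}_1\big(\sum L^{(j)} f^{(k-j)}\big)$ for $f^{(k)}$. There is no implicit equation to solve.

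Your version places $\vf^{(k)}$ on the right while $L^{(k)}$ (which depends on $\vf^{(k)}$) sits on the left, and you then try to invert $k\,\mathrm{Id}$ minus a multiplication-type operator on the $q=0,1$ components via a ``nilpotency'' claim. That claim is not substantiated: multiplication by fixed smooth functions such as $\nabla_1\nabla_{\oneb}f^{(0)}$ does not preserve total degree in spherical harmonics, and projecting by $\mathcal{P}_2$ does not make the resulting operator nilpotent on any natural finite-dimensional slice. So as written your argument has a genuine gap. Fortunately the gap disappears once the index shift is corrected, and the proof becomes the short explicit recursion the paper gives.
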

\begin{remark} Note that $\vf(t)$ is of the form $t\vfd + \psi(t)$ where $\psi(t) = \sum_{k=2}^{\infty} \varphi^{(k)}t^k$ takes values in $\mathfrak{D}_0^{\perp}$.
\end{remark}

\begin{proof}
By identifying coefficients of $t^k$ in \eqref{eq:embeddability} we get for $t^0$, $(\nabla_1)^2 f^{(0)}=\dot\vf_{11}$, and for $t^k$, $k\geq 1$,
\beq\label{eq:fkphik+1}
(\nabla_1)^2f^{(k)} +  \sum_{j=1}^{k} L^{(j)}f^{(k-j)} = (k+1)\varphi^{(k+1)}.
\eeq
We take $f^{(0)}$ to be the unique solution of $(\nabla_1)^2 f^{(0)}=\dot\vf_{11}$ with vanishing components in $H_{p,q}$ for $p=0,1$. For $k\geq 1$ we define $f^{(k)}$ and $\varphi^{(k+1)}$ recursively by decomposing $\sum_{j=1}^{k} L^{(j)}f^{(k-j)}= A_k + B_k$, where
\beq
A_k=\mathcal P_1 \sum_{j=1}^{k} L^{(j)}f^{(k-j)},\quad B_k=\mathcal P_2 \sum_{j=1}^{k} L^{(j)}f^{(k-j)},
\eeq
and then defining $f^{(k)}$ to be the unique solution to $(\nabla_1)^2f^{(k)} = -A_k$ with vanishing components in $H_{p,q}$ for $p=0,1$, and $\varphi^{(k+1)}$ to be $B_k/(k+1)$. The solutions are easily seen to be smooth by standard properties of the solution operator to $(\nabla_1)^2$. This concludes the proof.
\end{proof}
\begin{remark}
Note that in Proposition \ref{prop:formalSol} we could have instead allowed the components of the $f^{(k)}$ in $H_{p,q}$ for $p=0,1$ to be arbitrary, since $(\nabla_1)^2$ annihilates $H_{p,q}$ for $p=0,1$. Doing this we obtain the general formal solution to the tangency equation \eqref{eq:embeddability}. Below we shall use this flexibility and allow $f^{(0)}$ to have a nontrivial component in $H_{0,0}$.
\end{remark}

\subsection{Deformations in the Burns-Epstein region}

Recall from the introduction that the space of Burns-Epstein deformations $\mathfrak{D}_{BE}$ is the set of all deformation tensors $\vf$ such that $\vf_{p,q}=0$ if $q<p+4$. The following lemma follows easily by inspection of the definition of $L_{\vf}$ given in \eqref{eq:L}.
\begin{lemma}\label{lem:BE-phi-f}
Let $\varphi_{11}\in \mathfrak{D}_{BE}$ and $f\in (Z_{\oneb})^2 \mathfrak{D}_{BE} $. Then $L_{\varphi}f \in \mathfrak{D}_{BE}$.
\end{lemma}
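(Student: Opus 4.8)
The plan is to verify the claim by a direct bookkeeping argument on the spherical harmonic bidegrees $(p,q)$, using the two facts recalled in the excerpt: the operator $Z_1$ (equivalently $\nabla_1$, since $\omega_1{}^1 = -iR\theta$) shifts bidegree $(p,q)\mapsto(p-1,q+1)$ (killing the term when $p=0$), its conjugate $Z_{\oneb}$ shifts $(p,q)\mapsto(p+1,q-1)$ (killing the term when $q=0$), and $\nabla_0 = T$ preserves bidegree. Recall $\mathfrak{D}_{BE}$ consists of tensors supported in the region $q \geq p+4$, and $(Z_{\oneb})^2\mathfrak{D}_{BE}$ is then supported in the region $q \geq p+6$ (applying $Z_{\oneb}^2$ shifts $(p,q)\mapsto(p+2,q-2)$, turning $q\geq p+4$ into $q\geq p+6$). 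So I must show: if $\vf$ is supported in $\{q\geq p+4\}$ and $f$ is supported in $\{q\geq p+6\}$, then each of the six terms of $L_\vf f$ in \eqref{eq:L} lands in $\{q\geq p+4\}$.

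First I would note the general principle: if $u$ is supported in $\{q\geq p+a\}$ and $v$ in $\{q\geq p+b\}$, then any product $uv$ is supported in $\{q \geq p + a + b\}$ — this is because a harmonic component of $uv$ in bidegree $(p,q)$ can only come from pairing $H_{p_1,q_1}$-components of $u$ with $H_{p_2,q_2}$-components of $v$ where $p \leq p_1 + p_2$ and $q \geq q_1 + q_2 - (\text{lower order corrections})$; more carefully, the product of a $(p_1,q_1)$ polynomial and a $(p_2,q_2)$ polynomial is a polynomial of bidegree $(p_1+p_2, q_1+q_2)$, whose harmonic decomposition on $S^3$ only involves bidegrees $(p',q')$ with $p' \leq p_1+p_2$, $q'\leq q_1+q_2$ and $p_1+p_2 - p' = q_1 + q_2 - q'$ (stripping factors of $|z|^2 = z\bar z + w\bar w$ drops both $p$ and $q$ by one). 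Hence $q' - p' = (q_1+q_2) - (p_1+p_2) \geq (q_1 - p_1) + (q_2 - p_2) \geq a + b$. Then I apply this termwise: $\vf_1{}^{\oneb}\nabla_1\nabla_{\oneb}f$ and $\vf_1{}^{\oneb}\nabla_{\oneb}\nabla_1 f$ pair $\vf$ (region $q\geq p+4$) with $\nabla_1\nabla_{\oneb}f$ or $\nabla_{\oneb}\nabla_1 f$, both of which are in region $q\geq p+6$ since the $\nabla_1$/$\nabla_{\oneb}$ pair preserves $q-p$; the $(\vf_1{}^{\oneb})^2(\nabla_{\oneb})^2 f$ term pairs $\vf^2$ (region $q\geq p+8$) with $(\nabla_{\oneb})^2 f$ (region $q\geq p+10$), giving way more than enough; $\vf_1{}^{\oneb}{}_{,1}\nabla_{\oneb}f = (\nabla_1\vf)(\nabla_{\oneb}f)$ pairs $\nabla_1\vf$ (region $q\geq p+6$) with $\nabla_{\oneb}f$ (region $q\geq p+4$); $\vf_1{}^{\oneb}{}_{,\oneb}\nabla_1 f = (\nabla_{\oneb}\vf)(\nabla_1 f)$ pairs $\nabla_{\oneb}\vf$ (region $q\geq p+2$) with $\nabla_1 f$ (region $q\geq p+8$); and $i\vf_{11,0}f = i(\nabla_0\vf)f$ pairs $\nabla_0\vf$ (region $q\geq p+4$, since $\nabla_0$ preserves bidegree) with $f$ (region $q\geq p+6$). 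In every case $q - p \geq 4$, so $L_\vf f\in\mathfrak{D}_{BE}$.

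The only genuinely delicate point — and where I'd be most careful — is justifying the "product of regions adds the exponents" principle cleanly on $S^3$, i.e. that the harmonic decomposition of a product of spherical harmonics cannot create components with $q - p$ smaller than the sum. The resolution is exactly the remark above: multiplying homogeneous harmonic polynomials gives a homogeneous polynomial of bidegree equal to the sum of the bidegrees, and passing to the harmonic decomposition on the sphere only introduces powers of $z\bar z + w\bar w$ which decrease $p$ and $q$ by the same amount, leaving $q - p$ unchanged, so the minimal value of $q-p$ over the support can only increase (it is a sum of two quantities each bounded below). I would phrase this as a short standalone sublemma and then the proof of Lemma \ref{lem:BE-phi-f} is the two-line application above. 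No Nash-Moser, no PDE — this is pure representation-theoretic bookkeeping, which is why the excerpt says it "follows easily by inspection."
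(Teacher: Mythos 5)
Your overall strategy---termwise bidegree bookkeeping using the fact that multiplication adds the lower bounds on $q-p$---is exactly what the paper has in mind when it says the lemma ``follows easily by inspection'' (the paper gives no further proof). Your justification of the product rule, via the observation that stripping powers of $|z|^2+|w|^2$ preserves $q-p$ in the harmonic decomposition, is also correct.

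However, there is a concrete arithmetic error at the outset that propagates through your whole tally: you claim $(Z_{\oneb})^2\mathfrak{D}_{BE}$ is supported in $\{q\geq p+6\}$, whereas in fact it is supported in $\{q\geq p\}$. Since $Z_{\oneb}\colon H_{p,q}\to H_{p+1,q-1}$ \emph{lowers} $q-p$ by $2$, applying $(Z_{\oneb})^2$ to a component in $H_{p,q}$ with $q-p\geq 4$ lands in $H_{p+2,q-2}$, where $(q-2)-(p+2)=q-p-4\geq 0$; the gap of $4$ is consumed, not widened to $6$. Concretely, $\bar z^4\in H_{0,4}\subset\mathfrak{D}_{BE}$, yet $(Z_{\oneb})^2\bar z^4 = 12\,w^2\bar z^2\in H_{2,2}$, which has $q=p$. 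There is also an internal inconsistency in your treatment of the third term, where you list $(\nabla_{\oneb})^2 f$ in $\{q\geq p+10\}$, i.e.\ you raise $q-p$ by $4$, contradicting your own (correct) use of $\nabla_{\oneb}$ as lowering $q-p$ by $2$ in the fourth and fifth terms.

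The conclusion does survive the correction, but the bookkeeping is much tighter than your count suggests---in fact it is sharp. With $f$ supported in $\{q-p\geq 0\}$: the terms $\vf\nabla_1\nabla_{\oneb}f$, $\vf\nabla_{\oneb}\nabla_1 f$, and $(\nabla_0\vf)f$ each pair $\{q-p\geq 4\}$ with $\{q-p\geq 0\}$, landing exactly on $\{q-p\geq 4\}$; the term $(\nabla_{\oneb}\vf)(\nabla_1 f)$ pairs $\{q-p\geq 2\}$ with $\{q-p\geq 2\}$; the term $\vf^2(\nabla_{\oneb})^2 f$ pairs $\{q-p\geq 8\}$ with $\{q-p\geq -4\}$; and $(\nabla_1\vf)(\nabla_{\oneb}f)$ pairs $\{q-p\geq 6\}$ with $\{q-p\geq -2\}$. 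Every one of the six terms saturates the bound $q-p\geq 4$---there is no slack at all, contrary to the impression given by your ``way more than enough.'' You should correct the statement about the support of $f$ before this argument can be read as a proof.
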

An immediate consequence of this is the following:
\begin{lemma}\label{lem:BE-induction}
Let $\varphi_{11}(t) =\sum_{k=1}^{\infty} \varphi^{(k)}t^k$ and $f_t =\sum_{k=0}^\infty f^{(k)}t^k$ be formal power series with values in $C^{\infty}(S^3,\mathbb{C})$. If $\varphi^{(j)} \in \mathfrak{D}_{BE}$, $1\leq j\leq k$, and $f^{(j)}\in (Z_{\oneb})^2 \mathfrak{D}_{BE}$, $0\leq j \leq k-1$, then $\sum_{j=1}^{k} L^{(j)}f^{(k-j)} \in \mathfrak{D}_{BE}$.
\end{lemma}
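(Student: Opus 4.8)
The statement is an immediate consequence of Lemma \ref{lem:BE-phi-f}, applied termwise. First I would recall the structure of $L^{(j)}$: each operator $L^{(j)}$ is, by its explicit formula, exactly the coefficient of $t^j$ in the Taylor expansion of $L_\vf$, and it depends linearly on $\vf^{(j)}$ and its covariant derivatives (plus, in the one quadratic term $\sum_{i=1}^{j-1}\vf^{(i)}\vf^{(j-i)}(\nabla_{\oneb})^2$, bilinearly on lower-order $\vf^{(i)}$). The key observation is that $L^{(j)}g$ is precisely what you get by plugging a single homogeneous piece $\vf^{(j)}$ (or a product of two pieces) into the defining expression \eqref{eq:L} for $L_\vf$ acting on $g$. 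Hence Lemma \ref{lem:BE-phi-f}, which concerns $L_\vf f$ for a single $\vf \in \mathfrak{D}_{BE}$ and $f \in (Z_{\oneb})^2\mathfrak{D}_{BE}$, applies to each individual summand $L^{(j)}f^{(k-j)}$.

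The argument then runs as follows. Fix $1 \le j \le k$. By hypothesis $\vf^{(j)} \in \mathfrak{D}_{BE}$ (since $j \le k$), each $\vf^{(i)} \in \mathfrak{D}_{BE}$ for $i \le j-1$ (so the quadratic term $\sum_{i=1}^{j-1}\vf^{(i)}\vf^{(j-i)}$ lies in $\mathfrak{D}_{BE}$, using that $\mathfrak{D}_{BE}$ is closed under the relevant products — this is implicit in Lemma \ref{lem:BE-phi-f}, whose proof handles exactly this kind of term), and $f^{(k-j)} \in (Z_{\oneb})^2 \mathfrak{D}_{BE}$ because $0 \le k-j \le k-1$. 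Thus the pair $(\vf^{(j)}, f^{(k-j)})$ satisfies the hypotheses of Lemma \ref{lem:BE-phi-f} (with the minor modification that the relevant instance of $L_\vf$ is the homogeneous-degree-$j$ piece $L^{(j)}$), and we conclude $L^{(j)}f^{(k-j)} \in \mathfrak{D}_{BE}$. Since $\mathfrak{D}_{BE}$ is a linear subspace, the sum $\sum_{j=1}^{k} L^{(j)}f^{(k-j)}$ lies in $\mathfrak{D}_{BE}$ as well.

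The only point requiring a little care — and the one I'd flag as the "main obstacle," though it is genuinely routine — is bookkeeping the spherical-harmonic bidegrees: one must check that each of the six types of terms in $L^{(j)}$ (the two $\nabla_1\nabla_{\oneb}$-type terms, the quadratic $(\nabla_{\oneb})^2$ term, the two first-order terms $(\nabla_1\vf^{(j)})\nabla_{\oneb}$ and $(\nabla_{\oneb}\vf^{(j)})\nabla_1$, and the zeroth-order term $-i\nabla_0\vf^{(j)}$) sends $(Z_{\oneb})^2\mathfrak{D}_{BE}$ into $\mathfrak{D}_{BE}$, using that $Z_1$ shifts $(p,q) \mapsto (p-1,q+1)$, that $Z_{\oneb}$ shifts $(p,q)\mapsto(p+1,q-1)$, and that $\nabla_0 = T$ preserves each $H_{p,q}$. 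But this is exactly the content of Lemma \ref{lem:BE-phi-f}, so I would simply invoke that lemma rather than redo the computation. In short, the proof is: apply Lemma \ref{lem:BE-phi-f} to each summand and sum.
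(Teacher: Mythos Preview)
Your proof is correct, but the paper takes a slightly slicker route that avoids the bookkeeping you flag as the ``main obstacle.'' Rather than applying Lemma~\ref{lem:BE-phi-f} to each summand $L^{(j)}f^{(k-j)}$ individually --- which, as you note, requires reopening the proof of Lemma~\ref{lem:BE-phi-f} because $L^{(j)}$ is not literally of the form $L_\psi$ for a single $\psi$ (the quadratic term is $\sum_{i=1}^{j-1}\vf^{(i)}\vf^{(j-i)}(\nabla_{\oneb})^2$ rather than $(\vf^{(j)})^2(\nabla_{\oneb})^2$) --- the paper simply forms the truncated polynomials $\tilde\vf = \sum_{j=1}^{k}\vf^{(j)}t^j$ and $\tilde f = \sum_{j=0}^{k-1}f^{(j)}t^j$. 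For each fixed $t$ these satisfy $\tilde\vf(t)\in\mathfrak{D}_{BE}$ and $\tilde f(t)\in (Z_{\oneb})^2\mathfrak{D}_{BE}$, so Lemma~\ref{lem:BE-phi-f} applies directly as a black box to give $L_{\tilde\vf}\tilde f(t)\in\mathfrak{D}_{BE}$ for all $t$; taking the $t^k$ coefficient (which agrees with the $t^k$ coefficient of $L_\vf f$) then yields the claim. Your termwise argument works because the bidegree analysis underlying Lemma~\ref{lem:BE-phi-f} applies term-by-term, but the paper's packaging trick sidesteps the need to say anything about that.
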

\begin{proof}
The lemma follows by applying Lemma \ref{lem:BE-phi-f} to $\tilde{\varphi}_{11} = \sum_{j=1}^{k} \varphi^{(j)}t^j$ and $\tilde{f}_t = \sum_{j=0}^{k-1} f^{(j)}t^j$ and then taking the $t^k$ coefficient of $L_{\tilde{\varphi}}\tilde{f}$, which is the same as the $t^k$ coefficient of $L_{\varphi}f$.
\end{proof}

We let $H_{FS}^s$ denote the Folland-Stein Sobolev space \cite{FollandStein1974} of complex valued functions on $S^3$ with $s$ derivatives in $L^2$ in the directions tangent to the contact distribution $H$. (Note that on these spaces any Reeb vector field, being a commutator of vector fields tangent to $H$, behaves like a second order operator.) We denote the norm on $H_{FS}^s$ by $\norm\cdot\norm_s$ (we will also occasionally use the standard Sobolev norm, which we denote by $\norm\cdot\norm_{H^s}$). Note that $\mathfrak{D}_{BE}$ is a subspace of the space $\mathfrak{D}_0$ of deformation tensors with vanishing component in $H_{p,q}$ for $q=0,1$. Thus Proposition \ref{prop:formalSol} can be applied to any infinitesimal deformation tensor in $\mathfrak{D}_{BE}$, and using Lemma \ref{lem:BE-induction} we will prove the following:

\begin{proposition} \label{prop:convergence-ft}
Given $\dot{\varphi}_{11}\in \mathfrak{D}_{BE}$, the unique formal power series $f_t =\sum_{k=0}^\infty f^{(k)}t^k$ and $\varphi_{11}(t) =\sum_{k=1}^{\infty} \varphi^{(k)}t^k$ given by Proposition \ref{prop:formalSol} satisfy
\begin{equation}\label{eqn:BE-tphidot}
\varphi_{11}(t) = t\dot{\varphi}_{11} \qquad \text{and} \qquad f^{(k)} \in (Z_{\oneb})^2 \mathfrak{D}_{BE}
\end{equation}
for all $k$. Moreover, for every $s\geq 10$ there is $C_s>0$ such that the formal power series $f_t =\sum_{k=0}^\infty f^{(k)}t^k$ converges for $|t|< R_s = (C_s\norm\dot{\varphi}_{11} \norm_s )^{-1}$ to an analytic function taking values in $H_{FS}^s$, and for each fixed $t$, $|t|< R_s$, $f_t$ is a $C^{\infty}$ function on $S^3$.
\end{proposition}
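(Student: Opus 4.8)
The plan is to separate the purely algebraic claim \eqref{eqn:BE-tphidot} from the analytic estimates. Write $G$ for the solution operator of $(\nabla_1)^2$: for $u$ with vanishing $H_{p,q}$-components for $q\le 1$, let $Gu$ be the unique solution of $(\nabla_1)^2(Gu)=u$ with vanishing $H_{p,q}$-components for $p\le 1$. Since $\nabla_1$ acts as $Z_1$ and carries $H_{p,q}$ isomorphically onto $H_{p-1,q+1}$ for $p\ge 1$, the operator $G$ carries $H_{p,q}$ onto $H_{p+2,q-2}$; comparing bidegrees, $G$ maps $\mathfrak D_{BE}$ into $(Z_{\oneb})^2\mathfrak D_{BE}$. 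I would then prove by induction on $k$ that $\varphi^{(j)}\in\mathfrak D_{BE}$ for $1\le j\le k$, that $\varphi^{(j)}=0$ for $j\ge 2$, and that $f^{(j)}\in(Z_{\oneb})^2\mathfrak D_{BE}$ for $0\le j\le k$. The base case holds since $\varphi^{(1)}=\dot\varphi_{11}\in\mathfrak D_{BE}$ and $f^{(0)}=G\dot\varphi_{11}\in(Z_{\oneb})^2\mathfrak D_{BE}$. For the inductive step, Lemma~\ref{lem:BE-induction} gives $\sum_{j=1}^{k}L^{(j)}f^{(k-j)}\in\mathfrak D_{BE}$, which has no $H_{p,q}$-components with $q\le 1$; hence the projection $\mathcal P_2$ annihilates it, so in the recursion \eqref{eq:fkphik+1} of Proposition~\ref{prop:formalSol} we obtain $\varphi^{(k+1)}=0$, while $f^{(k)}=-G\bigl(\sum_{j=1}^{k}L^{(j)}f^{(k-j)}\bigr)\in(Z_{\oneb})^2\mathfrak D_{BE}$. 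This yields \eqref{eqn:BE-tphidot}; in particular $\varphi_{11}(t)=t\dot\varphi_{11}$.

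Next I would exploit $\varphi_{11}(t)=t\dot\varphi_{11}$ to collapse the recursion. Inspecting \eqref{eq:L}, the operators $L^{(j)}$ vanish for $j\ge 3$ (the only term that could survive, $\sum_{i=1}^{j-1}\varphi^{(i)}\varphi^{(j-i)}(\nabla_{\oneb})^2$, is zero since $\varphi^{(i)}=0$ for $i\ge 2$), so $L_\varphi=tL^{(1)}+t^2L^{(2)}$ with $L^{(2)}=(\dot\varphi_{11})^2(\nabla_{\oneb})^2$ and $L^{(1)}$ a second-order operator whose coefficients are $\dot\varphi_{11}$, $\nabla_1\dot\varphi_{11}$, $\nabla_{\oneb}\dot\varphi_{11}$, $\nabla_0\dot\varphi_{11}$. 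The recursion becomes $f^{(0)}=G\dot\varphi_{11}$ and $f^{(k)}=-G\bigl(L^{(1)}f^{(k-1)}+L^{(2)}f^{(k-2)}\bigr)$ for $k\ge 1$ (with $f^{(-1)}:=0$). I would then use two Folland--Stein facts on $S^3$: the estimate $\|Gu\|_{s}\le C_s\|u\|_{s-2}$ for $u$ in the range of $(\nabla_1)^2$ (the solution operator gains two Folland--Stein derivatives, from the explicit action of $Z_1$ on spherical harmonics, equivalently a Folland--Stein a priori estimate for $(\nabla_1)^2$ on the orthocomplement of its kernel), and the product estimates $\|L^{(1)}v\|_{s-2}\le C_s\|\dot\varphi_{11}\|_s\|v\|_s$ and $\|L^{(2)}v\|_{s-2}\le C_s\|\dot\varphi_{11}\|_s^2\|v\|_s$; the latter are where $s\ge 10$ enters, since $s-2$ must exceed the homogeneous dimension $4$ of $S^3$ with enough room to absorb the two derivatives in $L^{(j)}$ and the Reeb derivative $\nabla_0$, which counts as two Folland--Stein derivatives. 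Writing $a_k=\|f^{(k)}\|_s$ and $b=C_s\|\dot\varphi_{11}\|_s$ (we may take $C_s\ge 1$), this gives $a_0\le b$ and $a_k\le b\,a_{k-1}+b^2a_{k-2}$; substituting $a_k=b^kc_k$ yields $c_k\le c_{k-1}+c_{k-2}$, so $c_k$ grows at most like a Fibonacci sequence and $\|f^{(k)}\|_s\le M_s\bigl(\tfrac{1+\sqrt5}{2}\,C_s\|\dot\varphi_{11}\|_s\bigr)^k$. Absorbing the golden ratio into $C_s$, the series $\sum_k f^{(k)}t^k$ converges absolutely in $H^s_{FS}$ for $|t|<R_s=(C_s\|\dot\varphi_{11}\|_s)^{-1}$, and a power series with geometrically bounded coefficients in a Banach space is analytic in $t$ on that disc; since this holds for all $s\ge 10$, the limits agree.

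The step I expect to be the main obstacle is the smoothness of $f_t$ for fixed $t$: because $R_s$ typically decreases in $s$, the previous paragraph only places $f_t$ in $H^s_{FS}$ for finitely many $s$, and a naive bootstrap in the Folland--Stein scale fails, since $G$ gains exactly the two derivatives that $L_{t\dot\varphi}$ loses, so $L_{t\dot\varphi}G$ has order zero and iterating it gains nothing. The remedy is an elliptic-regularity argument adapted from Burns--Epstein. The convergent sum $f_t$ satisfies the tangency equation \eqref{eq:embeddability} with $\varphi_{11}(t)=t\dot\varphi_{11}$ (an identity in $H^{s-2}_{FS}$), $\dot\varphi_{11}$ is smooth, and $f_t$ is $L^2$-orthogonal to $\ker(\nabla_1)^2$ since every $f^{(k)}$ is; hence $f_t=Gh_t$ with $h_t:=(\nabla_1)^2 f_t=\dot\varphi_{11}-L_{t\dot\varphi}f_t$, and $h_t$ solves $(\mathrm{id}+L_{t\dot\varphi}G)h_t=\dot\varphi_{11}$. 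One then controls the Neumann series $h_t=\sum_{j\ge 0}(-L_{t\dot\varphi}G)^j\dot\varphi_{11}$ by \emph{tame} product estimates, so that the small parameter appearing at each stage is $|t|$ times a \emph{fixed} low Folland--Stein norm of $\dot\varphi_{11}$ (not $\|\dot\varphi_{11}\|_s$), with the index $s$ entering only linearly through off-diagonal terms; this forces convergence of the series in $H^s_{FS}$ for \emph{every} $s$ once $|t|$ is small, so $h_t\in C^\infty$ and therefore $f_t=Gh_t\in C^\infty(S^3,\mathbb C)$ (as $G$ sends smooth functions to smooth functions). If desired, $R_s$ may then be taken no larger than this uniform smoothness radius, so that both halves of the statement hold on the same disc.
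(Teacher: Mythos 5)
Your treatment of the algebraic part, equation \eqref{eqn:BE-tphidot}, and of the convergence estimate is essentially the same as the paper's: Lemma~\ref{lem:BE-induction} drives the induction showing $\varphi^{(k+1)}=0$ and $f^{(k)}\in(Z_{\oneb})^2\mathfrak{D}_{BE}$, and the two-term recursion $f^{(k)}=-G\bigl(L^{(1)}f^{(k-1)}+L^{(2)}f^{(k-2)}\bigr)$ is bounded by the gain-of-two-derivatives estimate for $G=(\overline{\mathcal Q_0}Z_{\oneb})^2$ together with product estimates on $L^{(1)}$, $L^{(2)}$. Your Fibonacci normalization $a_k=b^k c_k$ is a cosmetic variant of the paper's direct inductive bound $\|f^{(k)}\|_s\le(C_s\|\dot\varphi\|_s)^{k+1}$; both give a geometric bound and hence analyticity on $|t|<R_s$.

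Where you genuinely diverge from the paper is in the argument that $f_t$ is $C^\infty$ for each fixed $t$, and you correctly identify exactly why this is the delicate step: $G$ gains precisely what $L_{t\dot\varphi}$ loses, so $A:=-GL_{t\dot\varphi}$ is a zero-order operator and the naive bootstrap stalls, while $R_s$ a priori shrinks with $s$. The paper resolves this via the Burns--Epstein pseudodifferential argument: $I-A\in Op\,S^0_{\mathcal V}\subset Op\,S^0_{1/2,1/2}$, its principal symbol is positive once $\|t\dot\varphi\|_{L^\infty}$ is small, and the G\aa rding-type a priori inequality $\|u\|_{H^s}\le K_s\|(I-A)u\|_{H^s}$ (the ``argument on pages 832--833'' of \cite{BurnsEpstein1990b}) yields $\|f\|_{H^s}\lesssim\|\dot\varphi\|_{H^s}$ for every $s$. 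You instead propose to expand $h_t=(\mathrm{id}+L_{t\dot\varphi}G)^{-1}\dot\varphi$ as a Neumann series and control it by two-scale (tame) product estimates, with the coefficient in front of the top norm being $|t|$ times a \emph{low} Folland--Stein norm of $\dot\varphi$, so that the series converges in $H^s_{FS}$ for every $s$. That route is workable, and a nice feature is that it avoids invoking the Beals--Greiner calculus and positivity-of-symbol machinery, staying purely functional-analytic. The trade-off is that your proposal asserts but does not verify the crucial tame estimate $\|L_{t\dot\varphi}Gu\|_s\le a\|u\|_s+b_s\|u\|_{s_0}$ with $a$ small and independent of $s$; in the Folland--Stein scale this requires Moser-type product inequalities which are not proved here, whereas the paper's approach piggybacks on an already-established lemma. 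You also note (correctly) that on your route one may need to shrink $R_s$ so that both halves of the statement hold on the same disc; the paper avoids this by noting that enlarging $C_s$ forces $\|t\dot\varphi\|_{L^\infty}$ to be small via Sobolev embedding whenever $|t|<R_s$. In sum: parts one and two match the paper, part three is a different and plausible but incompletely justified route.
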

\begin{remark}
In this paper we will not be concerned with optimal regularity in the finite regularity case. The choice $s\geq 10$ in the proposition is only for convenience and is not optimal.
\end{remark}

In the proof we will make use of the standard solution operator for the Kohn Laplacian $\square_b = -\nabla^{\oneb}\nabla_{\oneb}$ on $S^3$, denoted as in \cite{BurnsEpstein1990b} by $\mathcal{Q}_0$. That is, $\mathcal{Q}_0$ acts by zero on $\mathrm{ker}\,\square_b=\mathrm{ker}\,Z_{\oneb}$ and is the inverse to $\square_b$ on $(\mathrm{ker}\,Z_{\oneb})^{\perp}$. Note that $\square_b=-Z_{1}Z_{\oneb}$ acts on each $H_{p,q}$ by multiplication by $-q(p+1)$. (A quick way to see this is to note that $Z_1$ is $\mathrm{SU}(2)$-invariant and hence so is $\square_b$; since $\square_b$ preserves each $H_{p,q}$, Schur's lemma tells us that $\square_b$ must act by a constant on each $H_{p,q}$ and that constant is easily found by testing $\square_b$ on the element $(z+w)^p(\bar{z}-\bar{w})^q$ of $H_{p,q}$.) Thus, by definition, $\mathcal{Q}_0$ acts by $0$ on $H_{p,0}$ and by $-\frac{1}{q(p+1)}$ on $H_{p,q}$ for $q>0$. From this and the equivalence of the $H^s_{FS}$-norm with \eqref{eq:FS-norm} it follows that $\mathcal{Q}_0$ gains two derivatives in Folland-Stein spaces (indeed, $\mathcal{Q}_0$ is a Heisenberg pseudodifferential operator of order $-2$ \cite{BurnsEpstein1990b}). Now, if $\mathcal{S}_0$ denotes the orthogonal projection from $L^2$ functions on $S^3$ to CR functions with respect to the standard CR structure on $S^3$ then, by construction, $-Z_{1}Z_{\oneb}\mathcal{Q}_0 = \mathrm{Id} - \mathcal{S}_0$ and so $-Z_{\oneb}\mathcal{Q}_0$ the (unique) partial inverse to $Z_1$ that is zero on $\mathrm{ker}\,Z_{\oneb}$. By considering the action on each $H_{p,q}$ (or by noting that $\overline{\square}_bZ_{\oneb}=Z_{\oneb}\square_b$) it easy to check that $Z_{\oneb}\mathcal{Q}_0=\overline{\mathcal{Q}_0}Z_{\oneb}$. %This proves \cite[Lemma 3.11]{BurnsEpstein1990b}
From the above discussion it also readily follows that $(Z_{\oneb}\mathcal{Q}_0)^2=(\overline{\mathcal{Q}_0}Z_{\oneb})^2$ is the (unique) partial inverse to $(Z_1)^2$ that is zero on $\mathrm{ker}\,(Z_{\oneb})^2$. In particular, we have the following lemma.

\begin{lemma}\label{lem:Q_0}
If $g\in H^s_{FS}$ and $\mathcal{P}_2 g = 0$ then $u = (\overline{\mathcal{Q}_0}Z_{\oneb})^2 g$ solves $(\nabla_1)^2 u=g$. Moreover, there is a constant $C$ depending only on $s$ such that
\beq
\norm u \norm_{s+2} \leq C \norm g \norm_{s}.
\eeq
\end{lemma}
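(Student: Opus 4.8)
The statement to prove is Lemma~\ref{lem:Q_0}: that $u = (\overline{\mathcal{Q}_0}Z_{\oneb})^2 g$ solves $(\nabla_1)^2 u = g$ whenever $\mathcal{P}_2 g = 0$, together with the Folland–Stein estimate $\norm{u}_{s+2} \leq C\norm{g}_s$.

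\medskip

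\noindent\textbf{Proof plan.} The plan is to reduce everything to the two standard facts about the Kohn Laplacian $\square_b = -\nabla^{\oneb}\nabla_{\oneb}$ on $S^3$ and its solution operator $\mathcal{Q}_0$: first, the subelliptic estimate $\norm{\mathcal{Q}_0 h}_{s+2} \leq C\norm{h}_s$ on the orthogonal complement of $\ker\square_b$ (this is exactly the Folland–Stein/Kohn estimate, $\square_b$ being a subelliptic operator of ``order~$2$'' in the Folland–Stein scale); and second, that $\square_b \mathcal{Q}_0 = \mathrm{id}$ on $(\ker\square_b)^\perp$ while $\mathcal{Q}_0$ annihilates $\ker\square_b$. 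Taking conjugates, $\overline{\square_b} = -\nabla^1\nabla_1 = -Z_1 Z_{\oneb}$ (using $h_{1\oneb}=1$ and $\nabla_1 = Z_1$, $\nabla_{\oneb}=Z_{\oneb}$ on functions), with solution operator $\overline{\mathcal{Q}_0}$; note $\ker\overline{\square_b}$ is the conjugate of $\ker\square_b$, i.e.\ the span of $H_{p,q}$ with $p=0,1$. I would carry this out in two steps: verify the algebraic identity $(\nabla_1)^2 u = g$, then chain the estimates.

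\medskip

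\noindent\textbf{Step 1: the algebraic identity.} First I would observe, via the spherical harmonic description in Section~4.2, that on $S^3$ one has $\overline{\square_b} = -Z_1 Z_{\oneb}$ and that $Z_{\oneb}$ maps $H_{p,q}$ isomorphically onto $H_{p+1,q-1}$ for $q\geq 1$ and kills $H_{p,0}$. Hence for any function $h$, $Z_{\oneb}h$ lies in the span of $H_{p,q}$ with $q\geq 1$, which is contained in $(\ker\overline{\square_b})^\perp$ (since $\ker\overline{\square_b}$ is spanned by $H_{p,q}$ with $p=0,1$ — wait, more carefully: $\ker\overline{\square_b} = \ker Z_1$ acting after $Z_{\oneb}$; the cleanest route is to note $\overline{\square_b}$ acts on $H_{p,q}$ as a nonzero scalar unless $p=0$, and separately that $\mathcal{Q}_0$ inverts it on the image). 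Then $\overline{\mathcal{Q}_0}Z_{\oneb}h$ satisfies $\overline{\square_b}(\overline{\mathcal{Q}_0}Z_{\oneb}h) = Z_{\oneb}h$, i.e.\ $-Z_1 Z_{\oneb}(\overline{\mathcal{Q}_0}Z_{\oneb}h) = Z_{\oneb}h$. Applying this with $h = \overline{\mathcal{Q}_0}Z_{\oneb}g$ and unwinding, one gets $-Z_1 Z_{\oneb} u = Z_{\oneb}(\overline{\mathcal{Q}_0}Z_{\oneb}g)$; applying $-Z_1$ once more and using $\overline{\square_b}(\overline{\mathcal{Q}_0}Z_{\oneb}g) = Z_{\oneb}g$ (valid since $\mathcal{P}_2 g = 0$ forces $Z_{\oneb}g$ to be in the appropriate range — in fact $\mathcal{P}_2 g = 0$ means $g$ has no $H_{p,0}$ or $H_{p,1}$ components, so $Z_{\oneb}g$ has no $H_{p,1}$ or $H_{p,2}$... the point is $Z_{\oneb}g$ lands in the range of $Z_1$, i.e.\ $\overline{\mathcal{Q}_0}$ genuinely inverts). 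Carefully chasing the spherical harmonic bookkeeping gives $(Z_1 Z_1) u = g$, which since $\nabla_1 = Z_1$ on functions is exactly $(\nabla_1)^2 u = g$.

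\medskip

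\noindent\textbf{Step 2: the estimate.} This is a straightforward composition. Each application of $Z_{\oneb}$ costs one Folland–Stein derivative: $\norm{Z_{\oneb}h}_{s} \leq C\norm{h}_{s+1}$. Each application of $\overline{\mathcal{Q}_0}$ gains two: $\norm{\overline{\mathcal{Q}_0}h}_{s+2}\leq C\norm{h}_s$ on the relevant subspace. So $\norm{\overline{\mathcal{Q}_0}Z_{\oneb}g}_{s+1} \leq C\norm{Z_{\oneb}g}_{s-1} \leq C\norm{g}_s$, and then $\norm{u}_{s+2} = \norm{\overline{\mathcal{Q}_0}Z_{\oneb}(\overline{\mathcal{Q}_0}Z_{\oneb}g)}_{s+2} \leq C\norm{Z_{\oneb}(\overline{\mathcal{Q}_0}Z_{\oneb}g)}_{s} \leq C\norm{\overline{\mathcal{Q}_0}Z_{\oneb}g}_{s+1} \leq C\norm{g}_s$, with $C$ depending only on $s$.

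\medskip

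\noindent\textbf{Main obstacle.} The genuinely delicate point is not the estimate — which is just bookkeeping with the known subelliptic bounds for $\mathcal{Q}_0$ — but making sure the composition $(\overline{\mathcal{Q}_0}Z_{\oneb})^2$ really does inverts $(\nabla_1)^2$ on the nose rather than up to a harmonic error. One must check that at each stage the argument of $\overline{\mathcal{Q}_0}$ lies in the orthogonal complement of $\ker\overline{\square_b}$ (so that $\overline{\mathcal{Q}_0}$ is a genuine right inverse there), and this is precisely where the hypothesis $\mathcal{P}_2 g = 0$ is used: it guarantees $g$, hence the intermediate functions $Z_{\oneb}g$ and $\overline{\mathcal{Q}_0}Z_{\oneb}g$ and $Z_{\oneb}\overline{\mathcal{Q}_0}Z_{\oneb}g$, have spherical-harmonic support only in components where the relevant operators are invertible. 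I expect the cleanest way to nail this down is to verify the identity componentwise on each $H_{p,q}$ using \eqref{eqn:Reeb-spherical-harmonics} and the mapping properties of $Z_1, Z_{\oneb}$ recalled in Section~4.2, where every operator involved is multiplication by an explicit nonzero scalar on the relevant pieces, so the identity $(\overline{\mathcal{Q}_0}Z_{\oneb})^2 \circ (\nabla_1)^2 = \mathrm{id}$ on $\operatorname{im}(\nabla_1)^2$ becomes a numerical check.
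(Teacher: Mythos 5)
Your proof is correct and takes essentially the same approach as the paper's, which simply cites the defining properties of $\mathcal{Q}_0$ (that it inverts $\square_b$ on $(\ker\square_b)^\perp$ and gains two Folland--Stein derivatives) from \cite{BurnsEpstein1990b}; you just fill in the spherical-harmonic bookkeeping that the paper leaves implicit. Two small slips worth noting: $\overline{\square_b}=-Z_{\oneb}Z_1$ (not $-Z_1Z_{\oneb}$, which is $\square_b$ itself), and $\ker\overline{\square_b}=\ker Z_1$ is the span of $H_{0,q}$ alone (only $p=0$), whereas $p=0,1$ is the kernel of $(\nabla_1)^2$ --- but since you ultimately check the identity componentwise on each $H_{p,q}$ (where all operators are explicit nonzero scalars, including the cancelling signs from $Z_1\overline{\mathcal{Q}_0}Z_{\oneb}=-\mathrm{id}$), the argument goes through.
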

\begin{proof}
Recalling that $\mathcal{P}_2 g = 0$ means that $g$ has vanishing components in $H_{p,q}$ for $q=0,1$ (and hence is in the image of $(\nabla_1)^2$) this follows from the definition of $\mathcal{Q}_0$ and the fact that it gains two derivatives in Folland-Stein space as discussed in the paragraph before the lemma.
%\edz{ we don't really need to cite elsewhere for such an operator... \\ it's clear that such an operator exists from the definition of the Folland-Stein spaces in terms of spherical harmonics}
% 
\end{proof}
%\edz{ use Schur's lemma and action of $Z_1$ on $(z+w)^p(\bar{z}-\bar{w})^q$\\ or use integration by parts to compute directly}

%\begin{remark}\label{rem:Q0}
%It is easy to explicitly construct a partial inverse to $\square_b$ with the properties in Lemma \ref{lem:Q_0} using the action of $\square_b$ on spherical harmonics. We choose to use the more abstractly defined (and less canonical) operator $\mathcal{Q}_0$ because it is explicitly defined as a Heisenberg pseudodifferential operator of order $-2$.
%\end{remark}

\begin{proof}[Proof of Proposition \ref{prop:convergence-ft}]
By the construction in the proof of Proposition \ref{prop:formalSol} and an induction using Lemma \ref{lem:BE-induction} we obtain \eqref{eqn:BE-tphidot} (Lemma \ref{lem:BE-induction} tells us that $A_k$ in the proof of Proposition \ref{prop:formalSol} will be in $\mathfrak{D}_{BE}$ and the $B_k$ will be zero, for each $k\geq 1$). It remains to be shown that $f_t$ is analytic in $t$, when viewed as taking values in the Banach space $H^s_{FS}$, and that for each fixed $t$ the function $f_t$ is $C^{\infty}$. Writing $\dot{\varphi}= \dot{\varphi}_{11}$, we then have
$$
L^{(1)} = \vfd\nabla_1\nabla_{\oneb} +\vfd\nabla_{\oneb}\nabla_1+(\nabla_1\vfd)\nabla_{\oneb}-
(\nabla_{\oneb}\vfd)\nabla_1-i\nabla_0\vfd,
$$
$L^{(2)}= \vfd^2 (\nabla_{\oneb})^2$, and $L^{(j)}=0$ for $j\geq 3$. Thus, the $f^{(k)}$ are determined by the equations $(\nabla_1)^2f^{(0)} = \vfd$, $(\nabla_1)^2f^{(1)} = -L^{(1)}f^{(0)}$,
$$
(\nabla_1)^2f^{(k)} = -L^{(1)}f^{(k-1)} - L^{(2)} f^{(k-2)}, \quad k\geq 2,
$$
and the fact that they are orthogonal to the kernel of $(\nabla_1)^2$. By Lemma \ref{lem:Q_0} we have $f^{(0)} = (\overline{\mathcal{Q}_0}Z_{\oneb})^2 \vfd$, $f^{(1)} = - (\overline{\mathcal{Q}_0}Z_{\oneb})^2L^{(1)}f^{(0)}$ and
$$
f^{(k)} = - (\overline{\mathcal{Q}_0}Z_{\oneb})^2\left(L^{(1)}f^{(k-1)} + L^{(2)} f^{(k-2)}\right), \quad k\geq 2.
$$
It follows that $\norm f^{(0)}\norm_{s} \leq C \norm \vfd\norm_{s-2} \leq C \norm \vfd\norm_{s}$ and, using the expressions for $L^{(1)}$ and $L^{(2)}$ above,
\beq
\norm f^{(1)}\norm_{s} \leq 5C \norm\vfd\norm_{s} \norm f^{(0)}\norm_{s}
\eeq
and
\beq
\norm f^{(k)}\norm_{s} \leq C \left(5\norm\vfd\norm_{s} \norm f^{(k-1)}\norm_{s} + \norm\vfd\norm_{s}^2 \norm f^{(k-2)}\norm_{s} \right), \quad k\geq 2.
\eeq
Choose $C_s$ such that $C_s^2 > C(5C_s+1)$. By induction using the above display we then conclude $\norm f^{(k)}\norm_s \leq (C_s \norm \vfd\norm_s)^{k+1}$ for all $k\in\{0,1,2,\ldots\}$. This proves that $f_t =\sum_{k=0}^\infty f^{(k)}t^k$ converges for $|t|< (C_s\norm\dot{\varphi}_{11} \norm_s )^{-1}$ to an analytic function valued in $H_{FS}^s$ functions on $S^3$.

To complete the proof, we shall now show that for each fixed $t$, $|t|< (C_s\norm\dot{\varphi}_{11} \norm_s )^{-1}$, $f_t$ is $C^\infty$ smooth. This follows by an elliptic regularity argument parallel to that given in the proof of Theorem 5.3 in \cite{BurnsEpstein1990b}. Fix $s_0\geq 10$ and $t$ with $|t|< (C_{s_0}\norm\dot{\varphi}_{11} \norm_{s_0} )^{-1}$, and let $f=f_t$ and $\varphi=t\vfd$. By construction $f$ is orthogonal to the kernel of $(\nabla_1)^2$ and satisfies $(\nabla_1)^2f  =- L_{\varphi} f + \vfd$. Applying $(\overline{\mathcal{Q}_0}Z_{\oneb})^2$ to this last equation we get
$$
f = -  (\overline{\mathcal{Q}_0}Z_{\oneb})^2L_{\varphi} f + (\overline{\mathcal{Q}_0}Z_{\oneb})^2\vfd.
$$
Letting $A = -  (\overline{\mathcal{Q}_0}Z_{\oneb})^2L_{\varphi}$ we then have
\beq \label{eqn:(I-A)f}
(I-A)f = (\overline{\mathcal{Q}_0}Z_{\oneb})^2\vfd.
\eeq
Since $\overline{\mathcal{Q}_0}\in Op\,S^{-2}_{\mathcal{V}}$ (in the notation of the Heisenberg pseudodifferential calculus of Beals and Greiner \cite{BealsGreiner1988}), it is easy to see that $I-A\in Op\,S^{0}_{\mathcal{V}}\subset Op\,S^{0}_{\frac{1}{2},\frac{1}{2}}$; here we are using the notation $Op\,S^{m}_{\frac{1}{2},\frac{1}{2}}$ for the classical pseudodifferential operators of type $(\frac{1}{2},\frac{1}{2})$ and order $m$. As in \cite{BurnsEpstein1990b} (where $A$ is taken to be $\mathcal{Q}_0 Z_1 \overline{\varphi} Z_1$) it is easy to see that if $\norm \vfd\norm_{L^{\infty}(S^3)}$ is sufficiently small, then the principal symbol of $I-A$ is positive. If we further take $\vfd$ to be sufficiently small in $C^1$ then the argument on pages 832-833 of \cite{BurnsEpstein1990b} shows that there is a constant $K_s$ (depending on the $H^s$-norm of $\vfd$) such that
\beq
\norm u\norm_{H^s} \leq K_s \norm (I-A)u\norm_{H^s}.
\eeq
Applying this to \eqref{eqn:(I-A)f} we have
\beq
\norm f\norm_{H^s} \leq K_s \norm(\overline{\mathcal{Q}_0}Z_{\oneb})^2\vfd\norm_{H^s}  \leq K_s' \norm\vfd\norm_{H^{s-1}}
\eeq
where in the last inequality we have used that $(\overline{\mathcal{Q}_0}Z_{\oneb})^2 \in Op\, S^{-2}_{\mathcal{V}} \subset Op\, S^{-1}_{\frac{1}{2},\frac{1}{2}}$. Since $\vfd\in C^{\infty}(S^3,\mathbb{C})$ it follows that $f\in C^{\infty}(S^3,\mathbb{C})$.
\end{proof}
\begin{remark}
In the above regularity argument we used standard Sobolev spaces as in pages 832-833 of \cite{BurnsEpstein1990b}. In the proof of Theorem \ref{thm:smooth-soln} below it will be convenient to instead work with Folland-Stein spaces and we will see that this is possible using results of Ponge \cite{Ponge2008} on the Heisenberg pseudodifferential calculus (so we could have used Folland-Stein spaces in the above proof). However, in Section \ref{sec:proofs-slice-thms} where we prove the slice theorems we shall again work with standard Sobolev spaces (for consistency with \cite{ChengLee1995}) and will again need an elliptic regularity argument of the kind given above. 
\end{remark}

In order to apply Theorem \ref{P:embed} to produce a family of embeddings realizing a family of deformations $t\vfd$ with $\vfd \in \mathfrak{D}_{BE}$ we need to ensure that the family of solutions $f_t$ to the tangency equation are such that $\re f_t$ has strict sign. To do this we modify Proposition \ref{prop:convergence-ft} making use of the freedom to add a constant to $f_0 = f^{(0)}$ due to the kernel of $(\nabla_1)^2$ and prove the following theorem.
\begin{theorem}\label{thm:lambda-convergence}
For any $s \geq 10$, $\lambda \in \mathbb{R}$, $R>0$ there exists $\epsilon>0$ such that if $\dot{\varphi}_{11}\in \mathfrak{D}_{BE}$ satisfies $\norm \vfd \norm_s < \epsilon$ then there is a unique formal power series $f_t =\lambda + \tilde{f}_t = \lambda + \sum_{k=0}^\infty \tilde{f}^{(k)}t^k$ such that
\begin{itemize}
\item[(i)] $\tilde{f}^{(k)} \in  (Z_{\oneb})^2 \mathfrak{D}_{BE}$ for $k\geq 0;$
\item[(ii)] $f_t$ converges for $|t|<R$ to an analytic function taking values in $H_{FS}^s$, and for each fixed $t$, $|t|< R$, $f_t$ is a $C^{\infty}$ function on $S^3;$
\item[(iii)] $f_t$ solves
\beq \label{eqn:linearized-emb-t}
\nabla_1^t\nabla_1^t f_t + iA_{11}(t)f_t = \frac{\dot\vf_{11}}{1-|\vf(t)|^2}
\eeq
where $\varphi_{11}(t) = t\vfd_{11};$
\item[(iv)] if $\lambda\neq 0$ and $R$ is sufficiently large, then $\mathrm{Re}\,f_t$ has a strict sign for $|t|\leq 1$.
\end{itemize}
\end{theorem}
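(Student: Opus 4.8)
### Proof proposal for Theorem \ref{thm:lambda-convergence}

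The plan is to run the formal-power-series construction of Proposition \ref{prop:formalSol}, but now exploiting the freedom noted in the final remark after that proposition: we allow $f^{(0)}$ to carry a nontrivial constant term in $H_{0,0}$. Concretely, I would set $f_t = \lambda + \tilde f_t$ and observe that since $\varphi_{11}(t) = t\dot\varphi_{11}$ is prescribed (it need no longer be deduced, unlike in Proposition \ref{prop:formalSol}), equation \eqref{eq:embeddability}---equivalently \eqref{eqn:linearized-emb-t} after clearing the factor $(1-|\varphi(t)|^2)^{-1}$---becomes a \emph{linear} equation for $f_t$: namely $(\nabla_1)^2 f_t + L_\varphi f_t = \dot\varphi_{11}$. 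Because $(\nabla_1)^2$ annihilates constants and $L_\varphi$ has no constant term in its Taylor expansion in $t$, the equation for $\tilde f_t$ reads $(\nabla_1)^2 \tilde f_t + L_\varphi(\lambda + \tilde f_t) = \dot\varphi_{11}$, i.e. $(\nabla_1)^2\tilde f_t + L_\varphi \tilde f_t = \dot\varphi_{11} - L_\varphi \lambda$. Identifying powers of $t$ and solving recursively with $\tilde f^{(k)}$ chosen orthogonal to $\ker(\nabla_1)^2$ (using the solution operator $(\overline{\mathcal Q_0}Z_{\oneb})^2$ as in Lemma \ref{lem:Q_0}) gives (i): the new source terms $L^{(j)}\lambda$ still lie in $\mathfrak D_{BE}$ by Lemma \ref{lem:BE-phi-f} (a constant is trivially in $(Z_{\oneb})^2\mathfrak D_{BE}$ in the degenerate sense, or one checks directly that $L^{(j)}\lambda = -i\nabla_0\varphi^{(j)}$ composed with the derivative terms lands in $\mathfrak D_{BE}$ since $\varphi^{(j)}=0$ for $j\ge 2$ and $L^{(1)}\lambda = (\nabla_1\dot\varphi)\cdot 0 - \ldots$ reduces to $-i\nabla_0\dot\varphi$, which is in $\mathfrak D_{BE}$), so the induction of Lemma \ref{lem:BE-induction} goes through verbatim.

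For (ii), the convergence argument is identical to that in the proof of Proposition \ref{prop:convergence-ft}: the recursion for $\tilde f^{(k)}$ has exactly the same shape, with $L^{(j)}=0$ for $j\ge 3$, $L^{(2)}=\dot\varphi^2(\nabla_{\oneb})^2$, and $L^{(1)}$ first order; the only change is an inhomogeneous correction at $k=0,1$ of size $O(|\lambda|\,\norm{\dot\varphi}_s)$. Choosing $C_s$ with $C_s^2 > C(5C_s+1)$ as before, one gets $\norm{\tilde f^{(k)}}_s \le \text{const}(|\lambda|,\norm{\dot\varphi}_s)\cdot (C_s\norm{\dot\varphi}_s)^k$, so if $\epsilon$ is chosen small enough (depending on $s$, $\lambda$, $R$) that $C_s\epsilon < 1/R$, the series converges on $|t|<R$ to an analytic $H^s_{FS}$-valued function. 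The $C^\infty$ regularity for fixed $t$ is then obtained by the same Heisenberg-pseudodifferential elliptic-regularity bootstrap ($f = -(\overline{\mathcal Q_0}Z_{\oneb})^2 L_\varphi f + \text{smooth}$, with $I - A$ of positive principal symbol when $\norm\varphi_{L^\infty}$ is small). Part (iii) is automatic, since clearing $(1-|\varphi(t)|^2)^{-1}$ from \eqref{eqn:linearized-emb-t} is precisely \eqref{eq:embeddability} with $\varphi_{11}(t)=t\dot\varphi_{11}$, which is what the recursion solves.

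The substantive point is (iv). We have $\re f_t = \lambda + \re \tilde f_t$ uniformly on $S^3 \times \{|t|\le 1\}$, and from the convergence estimate in (ii), $\sup_{|t|\le 1}\norm{\tilde f_t}_s \le \text{const}(|\lambda|,\epsilon)$ which, by the Folland--Stein Sobolev embedding ($s\ge 10 > $ the homogeneous dimension $4$ of $S^3$, so $H^s_{FS}\hookrightarrow C^0$), gives $\sup_{|t|\le 1}\norm{\tilde f_t}_{L^\infty(S^3)} \le \delta(|\lambda|,\epsilon)$ with $\delta \to 0$ as $\epsilon \to 0$ for fixed $\lambda$. Hence, after shrinking $\epsilon$ so that $\delta < 1$, and since $|\lambda|\ge 1$, we get $|\re f_t| \ge |\lambda| - \delta \ge 1 - \delta > 0$ on all of $S^3$ for $|t|\le 1$, so $\re f_t$ keeps the sign of $\lambda$ throughout. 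I expect the only real care needed is in quantifying the $\epsilon$-dependence of the constants in (ii) so that the $L^\infty$ bound in (iv) is genuinely small; this is bookkeeping rather than a conceptual obstacle, since the recursion is linear in $\dot\varphi$ to leading order and the constant $\lambda$ enters only through the first two terms. The main obstacle, such as it is, is simply to verify that inserting the constant $\lambda$ does not break the $\mathfrak D_{BE}$-invariance of Lemma \ref{lem:BE-phi-f} when $f$ has a $H_{0,0}$-component---but $H_{0,0}$ consists of constants, $L_\varphi$ applied to a constant produces $-i\nabla_0\varphi - (\nabla_{\oneb}\varphi)\cdot 0 + \ldots = -i\varphi_{11,0}$ plus terms with a derivative falling on the constant (which vanish), and $\varphi_{11}=t\dot\varphi\in\mathfrak D_{BE}$ implies $\nabla_0\varphi_{11}\in\mathfrak D_{BE}$ (as $\nabla_0=\nabla_0^t|$ shifts no bidegree, cf.\ \eqref{eqn:Reeb-spherical-harmonics}), so the invariance is preserved.
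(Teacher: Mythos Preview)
Your proposal is correct and follows essentially the same route as the paper: substitute $f_t=\lambda+\tilde f_t$, observe that $L_\varphi\lambda=-i\lambda\,\varphi_{11,0}=-i\lambda t\,\nabla_0\dot\varphi_{11}$ so the equation for $\tilde f_t$ becomes $(\nabla_1)^2\tilde f_t+L_\varphi\tilde f_t=\dot\varphi_{11}+i\lambda t\,\nabla_0\dot\varphi_{11}$, and then rerun the induction and estimates of Proposition~\ref{prop:convergence-ft} with the extra inhomogeneous term (which appears only at level $k=1$, not $k=0$). The one place where the paper is a little sharper than your sketch is in the handling of~(iv): rather than shrinking $\epsilon$ a second time, the paper chooses $C_s$ so that the induction yields $\norm{\tilde f^{(k)}}_s\le (C_s\norm{\dot\varphi}_s)^{k+1}$ and then $\epsilon=(C_sR)^{-1}$, which immediately gives the clean bound $\norm{f_t-\lambda}_\infty\le\norm{\tilde f_t}_s\le\sum_k(C_s\norm{\dot\varphi}_s)^{k+1}|t|^k\le 1/(R-|t|)$; for $R>2$ and $|t|\le1$ this is automatically $<1$, so (iv) follows without a further adjustment of $\epsilon$. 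Your qualitative ``shrink $\epsilon$ until $\delta<1$'' argument is also valid given the quantifier structure of the theorem, but the explicit bound explains why the hypothesis $R>2$ is exactly what is needed.
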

\begin{proof}
Recall that the equation \eqref{eqn:linearized-emb-t} for $f_t$ is equivalent to the equation
$$
(\nabla_1)^2 f_t + L_{\varphi}f_t = \vfd_{11}.
$$
In terms of $\tilde{f}_t$ this equation takes the form
$$
(\nabla_1)^2 \tilde{f}_t + L_{\varphi}\tilde{f}_t = \vfd_{11} + it\lambda  \nabla_0\vfd_{11}.
$$
Formally this equation is equivalent to $(\nabla_1)^2\tilde{f}^{(0)} = \vfd_{11}$, $(\nabla_1)^2\tilde{f}^{(1)} = -L^{(1)}\tilde{f}^{(0)} +i\lambda  \nabla_0\vfd_{11}$,
$$
(\nabla_1)^2\tilde{f}^{(k)} = -L^{(1)}\tilde{f}^{(k-1)} - L^{(2)} \tilde{f}^{(k-2)}, \quad k\geq 2.
$$
Since $\dot{\varphi}_{11}\in \mathfrak{D}_{BE}$ implies $\nabla_0\dot{\varphi}_{11}\in \mathfrak{D}_{BE}$, the unique formal solvability of this equation for $\tilde{f}_t= \sum_{k=0}^\infty \tilde{f}^{(k)}t^k$ satisfying (i) follows easily by induction using Lemma \ref{lem:BE-induction} as in the proof of Proposition \ref{prop:convergence-ft}. Arguing as in the proof of Proposition \ref{prop:convergence-ft} we obtain
$\norm \tilde{f}^{(0)}\norm_{s} \leq C \norm \vfd\norm_{s-2} \leq C \norm \vfd\norm_{s}$,
\beq
\norm \tilde{f}^{(1)}\norm_{s} \leq C\left(5 \norm\vfd\norm_{s} \norm \tilde{f}^{(0)}\norm_{s} + |\lambda|  \norm\vfd\norm_{s}\right)
\eeq
and
\beq
\norm \tilde{f}^{(k)}\norm_{s} \leq C \left(5\norm\vfd\norm_{s} \norm \tilde{f}^{(k-1)}\norm_{s} + \norm\vfd\norm_{s}^2 \norm \tilde{f}^{(k-2)}\norm_{s} \right), \quad k\geq 2.
\eeq
If we choose a constant $C_s$ such that $C_s\geq C(5C+|\lambda|)$ and $C_s^2 \geq C(5C_s + 1)$, then it follows by induction that $\norm f^{(k)}\norm_s \leq (C_s \norm \vfd\norm_s)^{k}$ for all $k\in\{1,2,\ldots\}$. Moreover, as long as $C_s$ is also $\geq C$, we have $\norm \tilde{f}^{(0)}\norm_{s}\leq C_s \norm \vfd\norm_{s}$. If such a $C_s$ has been chosen, the radius of convergence of the power series is at least $\rho=(C_s\norm \vfd\norm_{s})^{-1}$. Thus we have proved that for any $s \geq 10$, $\lambda \in \mathbb{R}$, $R>0$ there exists $\epsilon>0$ (e.g., $\epsilon=(C_sR)^{-1}$) such that if $\dot{\varphi}_{11}\in \mathfrak{D}_{BE}$ satisfies $\norm \vfd \norm_s < \epsilon$ then there is a unique formal power series $f_t =\lambda + \tilde{f}_t = \lambda + \sum_{k=0}^\infty \tilde{f}^{(k)}t^k$ satisfying $\tilde{f}^{(k)} \in  (Z_{\oneb})^2 \mathfrak{D}_{BE}$ for $k\geq 0$ such that $f_t$ converges for $|t|<R$ to an analytic function taking values in $H_{FS}^s$; by construction $f_t$ solves \eqref{eqn:linearized-emb-t}. The $C^{\infty}$ smoothness of $\tilde{f}_t$, and hence of $f_t$, for fixed $t$ follows as in the proof of Proposition \ref{prop:convergence-ft}.

We note that from the construction of $f_t$ above and the Sobolev embedding theorem for Folland-Stein spaces (since $s$ is $\geq 3$), we have for some constant $C'$:
\begin{align}
\norm f_t -\lambda\norm_{\infty} &\leq C'\norm \tilde{f}_t\norm_s \leq C'\sum_{k=0}^{\infty} \norm\tilde{f}^{(k)}\norm_s |t|^k \leq C'\left(C_s\norm \vfd\norm_s+ \sum_{k=1}^{\infty}  (C_s \norm \vfd\norm_s)^{k} |t|^k\right)   \\ & \leq  C'\left(C_s \norm \vfd\norm_s+ \frac{C_s \norm \vfd\norm_s|t|}{1-|t|C_s \norm \vfd\norm_s}\right) 
\leq C'\left(\frac{1}{R}+ \frac{|t|}{R-|t|}\right).\nonumber
\end{align}
From this it is easy to see that if $\lambda\neq 0$ and $R$ is sufficiently large, then $\mathrm{Re}\,f_t$ has a strict sign for $|t|\leq 1$.
\end{proof}

Theorem \ref{thm:BE-parametric} now follows from Theorems \ref{thm:lambda-convergence} and \ref{P:embed}.

\subsection{Families of embeddable deformations with general linearized term}

We now return to the case of general embeddable infinitesimal deformations $\vfd_{11}$, for which analyticity in $t$ of our formal solution may no longer hold. Our aim is to prove the following theorem.

\begin{theorem}\label{thm:smooth-soln}
For any $s \geq 10$, $\lambda < 0$, $T>0$ there exists $\epsilon>0$ such that if $\dot{\varphi}_{11}\in \mathfrak{D}_0$ satisfies $\norm \vfd \norm_s < \epsilon$ then there are unique $f_t = \lambda + \tilde{f}_t \in C^{\infty}([0,T]\times S^3,\mathbb{C})$ and $\varphi_{11}(t) \in C^{\infty}([0,T]\times S^3,\mathbb{C})$ such that
\begin{itemize}
\item[(i)] $\tilde{f}_t \in  (Z_{\oneb})^2 \left( C^{\infty}(S^3,\mathbb{C})\right)$ for all $t\in [0,T];$
\item[(ii)] $\varphi_{11}(t) = t\vfd_{11}+\mu_{11}(t)$ where $\mu_{11}(t)\in \mathfrak{D}_0^{\perp}$ for all $t\in [0,T]$ and $\mu_{11}(t) = O(t^2);$
\item[(iii)] $f_t$ and $\varphi_{11}(t)$ solve the tangency equation \eqref{Nabla2ft}.
\end{itemize}
\end{theorem}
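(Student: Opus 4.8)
The strategy is to decouple the tangency equation \eqref{Nabla2ft} (equivalently \eqref{eq:embeddability}) into an ``elliptic'' equation that recovers $f_t$ from $\varphi_{11}(t)$, and a genuinely \emph{parabolic} evolution equation for the $\mathfrak{D}_0^\perp$-part of $\varphi_{11}(t)$; since for general $\vfd_{11}\in\mathfrak{D}_0$ the formal power series of Proposition \ref{prop:formalSol} need not converge, we cannot argue as in the Burns--Epstein region and instead solve this evolution equation directly. Write $\varphi_{11}(t)=t\vfd_{11}+\psi_{11}(t)$ with $\vfd_{11}\in\mathfrak{D}_0$ and $\psi_{11}(t)\in\mathfrak{D}_0^\perp$, put $f_t=\lambda+\tilde f_t$, and recall from \eqref{eq:embeddability} that the equation reads $(\nabla_1)^2 f_t+L_{\varphi(t)}f_t=\dot\varphi_{11}(t)=\vfd_{11}+\dot\psi_{11}(t)$. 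Applying the orthogonal projections $\mathcal{P}_1$ (onto the image of $(\nabla_1)^2$) and $\mathcal{P}_2$ (onto $\mathfrak{D}_0^\perp=\ker(\nabla_{\oneb})^2$), and using $\mathcal{P}_1\dot\psi_{11}(t)=0$ and $\mathcal{P}_2\vfd_{11}=0$, one obtains the coupled system
\begin{align*}
(\nabla_1)^2 f_t&=\vfd_{11}-\mathcal{P}_1 L_{\varphi(t)}f_t,\\
\dot\psi_{11}(t)&=\mathcal{P}_2 L_{\varphi(t)}f_t,\qquad \psi_{11}(0)=0,
\end{align*}
together with the constraint $\tilde f_t\in(Z_{\oneb})^2 C^\infty(S^3,\bC)$, i.e.\ $\tilde f_t\perp\ker(\nabla_1)^2$.

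First I would solve the first equation for $f_t$ in terms of $\varphi(t)$. Applying the solution operator $(\overline{\mathcal{Q}_0}Z_{\oneb})^2$ of Lemma \ref{lem:Q_0} converts it into $(I-A_t)\tilde f_t=b_t$ on $(Z_{\oneb})^2 C^\infty$, where $A_t=-(\overline{\mathcal{Q}_0}Z_{\oneb})^2\mathcal{P}_1 L_{\varphi(t)}$ and, since the only term of $L_\varphi$ surviving on constants is $-i\varphi_{11,0}$ and $\mathcal{P}_1\nabla_0\psi_{11}(t)=0$, one finds $b_t=(\overline{\mathcal{Q}_0}Z_{\oneb})^2\vfd_{11}+i\lambda t(\overline{\mathcal{Q}_0}Z_{\oneb})^2\nabla_0\vfd_{11}$, which is smooth, of size $O(\|\vfd_{11}\|_s)$, and independent of $\psi_{11}$. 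As in the Beals--Greiner elliptic regularity argument of the proof of Proposition \ref{prop:convergence-ft}, $(\overline{\mathcal{Q}_0}Z_{\oneb})^2$ recovers the two Folland--Stein derivatives lost in the first-order and Reeb-derivative coefficients of $L_{\varphi(t)}$, so for $\varphi(t)$ small in $H^s_{FS}$ (with $s\geq 10$) the operator $I-A_t$ is invertible on $(Z_{\oneb})^2 H^s_{FS}$ and $\tilde f_t=(I-A_t)^{-1}b_t=:\mathcal{F}(t,\psi_{11}(t))$ depends smoothly on $\psi_{11}(t)$ with no net loss of regularity; moreover $\|\tilde f_t\|$ is small uniformly on $[0,T]$ once $\|\vfd_{11}\|_s$ is small, so $\re f_t=\lambda+\re\tilde f_t$ is strictly negative (which is exactly what is needed to invoke Theorem \ref{P:embed} afterwards).

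Substituting $\tilde f_t=\mathcal{F}(t,\psi_{11}(t))$ into the second equation yields a closed Cauchy problem $\dot\psi_{11}(t)=\mathcal{P}_2 L_{\varphi(t)}(\lambda+\mathcal{F}(t,\psi_{11}(t)))=:G(t,\psi_{11}(t))$, $\psi_{11}(0)=0$. The key point is that this is parabolic in the Folland--Stein scale precisely because $\lambda<0$: the only part of $G$ that is second order in $\psi_{11}$ with a non-small coefficient is $-i\lambda\,\mathcal{P}_2\nabla_0\psi_{11}(t)=\lambda(-i\nabla_0)\psi_{11}(t)$, coming from $-i\lambda\varphi_{11,0}$ in $L_{\varphi(t)}(\lambda+\cdots)$; since $\nabla_0$ acts on $H_{p,q}$ by $i(p-q+4)$, the operator $-i\nabla_0$ is positive on $\mathfrak{D}_0^\perp$ (where $q\in\{0,1\}$) and second order there in the Heisenberg sense, so $\lambda(-i\nabla_0)$ generates an analytic contraction semigroup with the usual subelliptic smoothing. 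Every remaining term of $G$ is of order $\leq 1$ in $\psi_{11}$ with bounded coefficients, or of order $2$ with coefficients proportional to $\varphi(t)=O(t)$ or to $\tilde f_t=O(\|\vfd_{11}\|_s)$, or independent of $\psi_{11}$. Hence a Duhamel/contraction-mapping argument based on the smoothing estimates for the semigroup generated by $\lambda(-i\nabla_0)$, valid on all of $[0,T]$ provided $\eps=\eps(s,\lambda,T)$ is small enough, produces a unique $\psi_{11}\in C([0,T];(Z_{\oneb})^2 H^s_{FS})$; parabolic regularity, together with $\dot\psi_{11}(0)=G(0,0)=\mathcal{P}_2 L_{\varphi(0)}(\lambda+\mathcal{F}(0,0))=0$ (so $\psi_{11}(t)=O(t^2)$, since $\varphi(0)=0$) and the smoothness of the higher $t$-derivatives at $t=0$, upgrades this to $\psi_{11}\in C^\infty([0,T]\times S^3,\bC)$, and hence $f_t\in C^\infty([0,T]\times S^3,\bC)$. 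Uniqueness of the pair $(f_t,\varphi_{11}(t))$ follows from uniqueness in the elliptic step together with uniqueness for the parabolic Cauchy problem.

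The main obstacle is precisely this last, parabolic, step: one must isolate the leading operator $\lambda(-i\nabla_0)$, check via the spherical-harmonic description of $\nabla_0$ on $\mathfrak{D}_0^\perp$ that it dominates the full right-hand side, balance the two-derivative gain of $(\overline{\mathcal{Q}_0}Z_{\oneb})^2$ against the two-derivative loss in the coefficients of $L_{\varphi(t)}$ so that $\psi\mapsto\mathcal{P}_2 L_{\varphi(t)}\mathcal{F}(t,\psi)$ genuinely fits a quasilinear parabolic framework, and absorb the order-two-in-$\psi_{11}$ terms using parabolic smoothing and the smallness of $\eps$ (which is where the dependence $\eps=\eps(T)$ enters). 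Since $-i\nabla_0$ is second order only in the Heisenberg sense, all estimates are carried out in the Folland--Stein scale with the Beals--Greiner calculus, exactly as in Proposition \ref{prop:convergence-ft}; the remaining bookkeeping --- the $(Z_{\oneb})^2$-constraint, smoothness up to $t=0$, and the $O(t^2)$ statement --- is then routine. (Alternatively, the evolution equation can be solved by a tame Nash--Moser iteration in the sense of Hamilton, as used elsewhere in this section.)
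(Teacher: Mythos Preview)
Your proposal is correct and identifies exactly the same structure as the paper's proof: the $\mathcal{P}_1$/$\mathcal{P}_2$ splitting, the elliptic recovery of $\tilde f_t$ via $(\overline{\mathcal{Q}_0}Z_{\oneb})^2$ and the Beals--Greiner argument from Proposition~\ref{prop:convergence-ft}, and the crucial observation that on $\mathfrak{D}_0^{\perp}$ the operator $i\lambda\nabla_0$ (equivalently $\lambda(-i\nabla_0)$) behaves like a negative sublaplacian so that the $\psi_{11}$-equation is genuinely parabolic when $\lambda<0$.

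The one noteworthy difference is in packaging. The paper sets the problem up as an inverse function theorem: it defines a map $\mathcal{F}(\tilde f,\varphi)$ whose zero locus encodes the tangency equation together with the constraint $\mathcal{P}_1\varphi'=\vfd$, first inverts $D\mathcal{F}_{(0,0)}$ in Banach spaces to get a weak solution, and then runs Nash--Moser to obtain $C^\infty$ regularity; the elimination of $\tilde f$ and the reduction to a perturbed heat equation $(\tfrac{d}{dt}+i\lambda\nabla_0+P_t)\mathcal{P}_2\varphi=\tilde b_2'$ (your parabolic step) appear inside the verification that $D\mathcal{F}_{(\tilde f_0,\varphi_0)}^{-1}$ exists and is tame for nearby base points. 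You instead perform the elimination at the nonlinear level and propose to solve the resulting quasilinear pseudodifferential parabolic equation for $\psi_{11}$ directly by a Duhamel/contraction argument. Either route works; the paper's Nash--Moser framing handles in one stroke the bookkeeping you flag at the end (that the perturbation $P_t$ is order~$2$ in the Heisenberg calculus with small coefficients, so tame parabolic estimates are needed rather than a naive Banach fixed point), whereas your direct approach is arguably more transparent about why $\lambda<0$ is what makes the problem well-posed. Your parenthetical remark that Nash--Moser is an alternative is precisely what the paper in fact does.
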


In the proof of Theorem \ref{thm:smooth-soln} we will solve \eqref{Nabla2ft} by splitting it into two equations using the $L^2$ orthogonal projection $\mathcal{P}_1$ from $\mathfrak{D}$ onto $\mathfrak{D}_0$ and the complementary projection $\mathcal{P}_2$ onto $\mathfrak{D}_0^{\perp}$. When we apply $\mathcal{P}_1$ to \eqref{Nabla2ft} we get an equation with main term $(\nabla_1)^2 f_t$, and when we apply $\mathcal{P}_2$ we get an equation with main term $\left(\frac{d}{dt}+i\lambda \nabla_0 \right) \mathcal{P}_2 \vf$ where $i\nabla_0$ acts on the deformation tensor $\mathcal{P}_2 \vf$ as $iT-4$ (since $\omega_1{}^1{}_0 = -iR = -2i$). A crucial point in the proof is that by \eqref{eqn:Reeb-spherical-harmonics} the operator $-iT$ when applied to elements of $\mathfrak{D}_0^{\perp}$ behaves like a second order subelliptic operator. For the proof of Theorem \ref{thm:smooth-soln} we will need the following lemmas which are based on this observation. The first lemma is an immediate consequence of the action of the Reeb vector field (cf.\ \eqref{eqn:Reeb-spherical-harmonics}) and the sublaplacian on the spherical harmonics and the description of the Folland-Stein spaces in terms of spherical harmonic decompositions.

\begin{lemma}\label{lem:energy-estimate1}
For any $\gamma>1$ and $s>0$ there exists $\beta_s>0$ such that the operator $-iT+\gamma$ satisfies the following estimate
\beq\label{eq:Reeb-energy-estimate}
\beta_s\norm u \norm_{s+2}
\leq \norm(-iT+\gamma) u \norm_{s}
\eeq
%\edz{S: Note $Tu$ is a `second order' derivative of $u$, but \eqref{eq:Reeb-energy-estimate} still makes sense for $u\in H^1_{FS}$.}
for any $u\in H^{s+2}_{FS}$ such that $u=\mathcal{P}_2 u$. If $\gamma \geq 2$, then $\beta_s$ can be taken to be $1/3$.
\end{lemma}

In the following we denote by $H^k([0,T_0];\mathcal{B})$ the Sobolev space of functions on $[0,T_0]$ taking values in a given Banach space $\mathcal{B}$, cf.\ \cite[Chapter 5]{Evans2010}.
\begin{lemma}\label{lem:Reeb-heat-equation}
Let $\lambda<0$, $s\geq 0$. If $g \in \bigcap_{k=0}^s H^k([0,T_0];H_{FS}^{2s-2k})$ satisfies $g=\mathcal{P}_2 g$ then there is a unique solution $u \in \bigcap_{k=0}^{s+1} H^k([0,T_0];H_{FS}^{2s+2-2k})$ to
\beq
\left\{
\begin{aligned}
\left(\tfrac{d}{dt} +  \lambda (iT-4) \right) u &= g\\
u(0) &= 0.
\end{aligned}
\right.
\eeq
Moreover, there is a constant $C$ depending only on $\lambda$ and $s$ such that
\beq\label{eqn-parabolic-estimate}
\sum_{k=0}^{s+1} \bignorm \frac{d^k u}{dt^k} \bignorm_{L^2([0,T_0]; H_{FS}^{2s+2-2k})} \leq C \sum_{k=0}^{s} \bignorm \frac{d^k g}{dt^k}\bignorm_{L^2([0,T_0]; H_{FS}^{2s-2k})}.
\eeq
\end{lemma}
\begin{proof}
The existence of a unique solution follows immediately by decomposing writing $g$ in terms of spherical harmonics and using that $iT$ acts by a constant on each $H_{p,q}$ (see \eqref{eqn:Reeb-spherical-harmonics}); indeed, if $g=\sum g_{p,q}$ (with $p=0,1,2,\ldots$ and $q=0,1$ since $g=\mathcal{P}_2 g$) then the explicit solution is given by
\beq
u(t) = \sum_{p,q}  \int_0^te^{-c_{p,q}(t-\tau)}g_{p,q}(\tau)d\tau 
\eeq
where $c_{p,q}=\lambda(q-p-4)$ (note that $c_{p,q}>0$, since $\lambda<0$ and $q$ equals $0$ or $1$). It remains to establish the estimate \eqref{eqn-parabolic-estimate}. To establish \eqref{eqn-parabolic-estimate} for  $s=0$ we take an $L^2(S^3)$-inner product of the equation with $u'=\frac{d}{dt}u$ to obtain
\beq
\norm u'\norm^2_{H^0_{FS}} + \sum_{p,q} c_{p,q}u_{p,q}\overline{u'_{p,q}} = \sum_{p,q} g_{p,q}\overline{u'_{p,q}}.
\eeq
Taking the real part we obtain
\beq
\norm u'\norm^2_{H^0_{FS}} + \frac{1}{2}\frac{d}{dt}\sum_{p,q} c_{p,q}|u_{p,q}|^2 = \re\sum_{p,q} g_{p,q}\overline{u'_{p,q}}.
\eeq
Hence, using the big constant-small constant inequality to estimate the left hand side from above by $\frac{C}{2}\norm g\norm^2_{H^0_{FS}} + \frac{1}{2}\norm u'\norm^2_{H^0_{FS}}$, we obtain
\beq
\norm u'\norm^2_{H^0_{FS}} + \frac{d}{dt}\sum_{p,q} c_{p,q}|u_{p,q}|^2 \leq C\norm g\norm^2_{H^0_{FS}}.
\eeq
Integrating over $[0,T_0]$ we obtain
\beq\label{eqn-parabolic-estimate-base-case}
\norm u'\norm^2_{L^2([0,T_0]; H_{FS}^{0})} + \sum_{p,q} c_{p,q}|u_{p,q}(T_0)|^2 \leq C\norm g\norm^2_{L^2([0,T_0]; H_{FS}^{0})}.
\eeq
Since $\lambda<0$ (and hence $c_{p,q}>0$) we have $\norm u'\norm^2_{L^2([0,T_0]; H_{FS}^{0})} \leq C\norm g\norm^2_{L^2([0,T_0]; H_{FS}^{0})}$. Thus we have gained in temporal regularity. To show that we also gain two Folland-Stein derivatives we rewrite the equation as $\lambda(iT-4)u=g-u'$ and use the result that we have just established together with the fact that $\lambda(iT-4)$ has a well-defined inverse that gains two derivatives in Folland-Stein spaces when acting on $\overline{\bigoplus_{q\in\{0,1\}}H_{p,q}}$ (where the overline denotes $L^2$-closure), see Lemma \ref{lem:energy-estimate1} (taking $\gamma=4$). Hence, for a possibly larger constant $C$,
\beq
\norm u'\norm^2_{L^2([0,T_0]; H_{FS}^{0})} + \norm u\norm^2_{L^2([0,T_0]; H_{FS}^{2})}  \leq C\norm g\norm^2_{L^2([0,T_0]; H_{FS}^{0})}.
\eeq
This establishes the estimate \eqref{eqn-parabolic-estimate} for $s=0$ and forms the base case for an inductive proof of \eqref{eqn-parabolic-estimate} for integers $s\geq 0$. Let $s\geq 0$ and suppose \eqref{eqn-parabolic-estimate} is known for this $s$. Let $g \in \bigcap_{k=0}^{s+1} H^k([0,T_0];H_{FS}^{2s-2k})$. Since $\left(\tfrac{d}{dt} +  \lambda (iT-4) \right) u' = g'$ we may apply the estimate known for this $s$ to $u'$ and $g'$ to obtain
\beq\label{eqn-parabolic-estimate-primed}
\sum_{k=1}^{s+2} \bignorm \frac{d^k u}{dt^k} \bignorm_{L^2([0,T_0]; H_{FS}^{2s+4-2k})} \leq C \sum_{k=1}^{s+1} \bignorm \frac{d^k g}{dt^k}\bignorm_{L^2([0,T_0]; H_{FS}^{2s+2-2k})}.
\eeq
We can clearly replace $k=1$ by $k=0$ on the right hand side, and the missing term $\norm u\norm_{L^2([0,T_0]; H_{FS}^{2s+4})}$ on the right hand side can then be estimated using that $\lambda(iT-4)u=g-u'$ and the estimate for $u'$ as previously. In this way we obtain, for a possibly larger $C$, that 
\beq\label{eqn-parabolic-estimate-s-plus-1}
\sum_{k=0}^{s+2} \bignorm \frac{d^k u}{dt^k} \bignorm_{L^2([0,T_0]; H_{FS}^{2s+4-2k})} \leq C \sum_{k=0}^{s+1} \bignorm \frac{d^k g}{dt^k}\bignorm_{L^2([0,T_0]; H_{FS}^{2s+2-2k})},
\eeq
as required. The result follows by induction.
\end{proof}
\begin{remark}
Note that there is of course nothing particularly special about the number $-4$ in the lemma, which could be replaced by any nonpositive real number. As explained above, this is the result that we will need in the proof of Theorem \ref{thm:smooth-soln}. 
\end{remark}

In practice, we will need to allow for small perturbations of the operator $\lambda (iT-4)$ in Lemma \ref{lem:Reeb-heat-equation}. Recall \cite{Ponge2008} that a Heisenberg pseudodifferential operator $P$ of order $m$ ($m$ real) on a compact manifold maps $H^s_{FS}$ continuously to $H^{s-m}_{FS}$. Note that, with respect to a (locally defined) $S^1$-invariant frame $W_1$, a second order Heisenberg differential operator $P$ can be written as
\beq
P = a^{11}W_1W_1 + b^{\oneb\oneb}W_{\oneb}W_{\oneb} + c^{1\oneb}W_1 W_{\oneb} + d^{\oneb 1} W_{\oneb}W_1 + e^1 W_1 + f^{\oneb} W_{\oneb} + g.
\eeq
In such a frame the operation of taking a commutator with $T$ is equivalent to differentiating the coefficients with respect to $T$, that is,
\begin{multline}
[T,P] =  (Ta^{11})W_1W_1 + (Tb^{\oneb\oneb})W_{\oneb}W_{\oneb} + (Tc^{1\oneb})W_1 W_{\oneb} \\+ (Td^{\oneb 1}) W_{\oneb}W_1 + (Te^1) W_1 + (Tf^{\oneb}) W_{\oneb} + Tg.
\end{multline}
In particular, $[T,P]$ is also a second order Heisenberg differential operator. Now, let $P$ be a Heisenberg pseudodifferential operator of order 2 on $S^3$. By decomposing $P$ with respect a finite open cover of $S^3$ (by $S^1$-invariant sets) such that on each open set we have a fixed $S^1$-invariant frame for $T^{1,0}$  and using a partition of unity we can show that $[T,P]$ is also a second order pseudodifferential operator, and hence is bounded from $H^s_{FS}$ to $H^{s-2}$ for each $s$. 

In the following lemma we modify Lemma \ref{lem:Reeb-heat-equation} by adding to $\lambda (iT-4)$ a perturbation term of the form $\mathcal{P}_2 \circ P$ with $P$ a small time-dependent second order Heisenberg pseudodifferential operator (recall that $\lambda (iT-4)$ is a second order Heisenberg differential operator itself and is subelliptic when restricted to the band of functions $u$ with $u=\mathcal{P}_2u$); the higher order estimates then depend on the corresponding norms of $\mathcal{P}_2\circ P$. We do this by adapting Hamilton's proof of Lemmas 6.9 and 6.10 in \cite{Hamilton1982}. Hence, if $P=P_t$, $t\in [0,T_0]$, is a smooth one-parameter family of second order Heisenberg pseudodifferential operators we define $[P]_{2s}$ by
\beq \label{eq:spatial-norm-on-P}
 [P]_{2s} = \int_0^{T_0} \norm\, [iT-4, \mathcal{P}_2\circ P_t]\, \norm_{H_{FS}^{2s+2}\to H_{FS}^{2s}}^2 \,dt
\eeq
%\beq
 %[P]_s = \sup_{t\in[0,T_0]}\mathrm{max} \{ \norm a^{11} \norm_s, \norm b^{\oneb\oneb} \norm_s, \norm c^{1\oneb} \norm_s, \norm d^{\oneb 1} \norm_s \}
%\eeq
and $|[P]|_{2s}$ by
\beq
|[P]|_{2s} = \sum_{k=0}^s \left[ \frac{d^k}{d t^k} P\right]_{2s-2k}.
\eeq
The norms are constructed precisely so that we will be able to apply the operator $T$ (or better, $iT-4$) to our equation in order to gain spatial regularity (and then gain temporal regularity by shifting all the spatial derivatives to the right hand side). In detail:

\begin{lemma}\label{lem:perturbed-Reeb-heat-equation}
Let $\lambda<0$, $s\geq 0$. Let $P=P_t$, $t\in [0,T_0]$, be a smooth one-parameter family of second order Heisenberg pseudodifferential operators on $S^3$ and suppose
\beq \label{eq:small-perturbation}
\norm P u \norm_{0} \leq \frac{|\lambda|}{10}\norm u \norm_{2} 
\eeq
for all $u\in H^2_{FS}$ and all $t$. If $g \in \bigcap_{k=0}^s H^k([0,T_0];H_{FS}^{2s-2k})$ satisfies $g=\mathcal{P}_2 g$ then there is a unique solution $u = \mathcal{P}_2 u \in \bigcap_{k=0}^{s+1} H^k([0,T_0];H_{FS}^{2s+2-2k})$ to
\beq
\left\{
\begin{aligned}
\left(\tfrac{d}{dt} + \lambda (iT-4) + \mathcal{P}_2\circ P\right) u &= g\\
u(0) &= 0.
\end{aligned}
\right.
\eeq
Moreover, there is a constant $C$ such that
\beq\label{inequality-u-g-with-Pt}
\sum_{k=0}^{s+1} \bignorm \frac{d^k u}{dt^k} \bignorm_{L^2([0,T_0]; H_{FS}^{2s+2-2k})} \leq C \sum_{k=0}^{s} \bignorm \frac{d^k g}{dt^k}\bignorm_{L^2([0,T_0]; H_{FS}^{2s-2k})} + C|[P]|_{2s}\norm g\norm_{L^2([0,T_0];H_{FS}^0)},
\eeq
where $C$ depends only on $\lambda$ and $s$.
\end{lemma}
\begin{proof}
The existence of a unique weak ($L^2$) solution follows by using Galerkin approximations for the equation decomposed into spherical harmonics and establishing convergence using energy estimates similar to the base case of the higher regularity argument below (cf.\ \cite[Section 7.1]{Evans2010}). To establish the base case of the \emph{a priori} estimate we argue as in the proof of Lemma \ref{lem:Reeb-heat-equation} to obtain the following modified version of \eqref{eqn-parabolic-estimate}:
\begin{multline}
\norm u'\norm^2_{L^2([0,T_0]; H_{FS}^{0})} + \sum_{p,q} c_{p,q}|u_{p,q}(T_0)|^2  \leq  C\norm g\norm^2_{L^2([0,T_0]; H_{FS}^{0})} - 2\re\int_0^{T_0}\int_{S^3}\overline{u'(t)} P u(t)dt.
\end{multline}
Here $c_{p,q}=\lambda(q-p-4)>0$ is the multiplier corresponding to $\lambda(iT-4)$. Throwing away the second term on the left and then estimating the last term on the right above by $\frac{1}{2}\norm u'\norm^2_{L^2([0,T_0]; H_{FS}^{0})} + 2\norm P u\norm^2_{L^2([0,T_0]; H_{FS}^{0})}$ and using the assumption on the norm of $P$ we obtain
\beq\label{estimate-u-H1}
\frac{1}{2}\norm u'\norm^2_{L^2([0,T_0]; H_{FS}^{0})}  \leq  C\norm g\norm^2_{L^2([0,T_0]; H_{FS}^{0})} + \frac{|\lambda|^2}{25}\norm u \norm_{L^2([0,T_0]; H_{FS}^{2})}^2. 
\eeq
By assumption, the perturbation term $P$ is small enough such that $\lambda (iT-4) + \mathcal{P}_2\circ P$ is still invertible and its inverse gains two Folland-Stein derivatives when acting on $L^2$-functions whose spherical harmonic decomposition is supported in the $H_{p,q}$ for $q=0,1$.
Writing the equation in the form $\left(\lambda (iT-4) + P\right)u=g-u'$ and using Lemma \ref{lem:energy-estimate1}, \eqref{eq:small-perturbation} and \eqref{estimate-u-H1}, a straightforward argument shows that
\beq
\frac{27|\lambda|}{30}\norm u\norm_{L^2([0,T_0]; H_{FS}^{2})} \leq C \norm g\norm_{L^2([0,T_0]; H_{FS}^{0})} + \frac{\sqrt{2}|\lambda|}{5}\norm u\norm_{L^2([0,T_0]; H_{FS}^{2})},
\eeq
for some constant $C$. It follows that $\norm u\norm_{L^2([0,T_0]; H_{FS}^{2})} \leq C \norm g\norm_{L^2([0,T_0]; H_{FS}^{0})}$ for some new constant $C$ that depends on $\lambda$. Combined with \eqref{estimate-u-H1} this establishes the base ($s=0$) case for an inductive proof of \eqref{inequality-u-g-with-Pt}. 

The inductive step is handled in a similar manner to the proof of Lemmas 6.9 and 6.10 in Hamilton's Ricci flow paper \cite{Hamilton1982} except with ordinary $L^2$-based Sobolev spaces replaced by $L^2$-based Folland-Stein spaces; these proofs are similar to the argument in Lemma \ref{lem:Reeb-heat-equation} except that Hamilton differentiates the equation with respect to space to gain spatial derivatives first and then uses the equation to gain time derivatives (rather than the other way around). Our definition \eqref{eq:spatial-norm-on-P} was made so that we avoid the need for interpolation in the spatial regularity part of the argument (since we can merely break $(iT-4)\mathcal{P}_2 P u$ up into $\mathcal{P}_2 P (iT-4)u + [(iT-4), \mathcal{P}_2\circ P]u$ and then estimate these two terms) and therefore avoid working directly with the symbol of the operator $P$ (with respect to a local $S^1$-invariant frame); we note however that the Folland-Stein version of the interpolation inequality used by Hamilton in the spatial regularity part does hold, e.g., by \cite[Corollary 2.12]{ChengLee1995}. For the temporal regularity part one invokes the interpolation inequality for ordinary Sobolev spaces (with respect to the time variable) in order to reduce to the cases when all the time derivatives fall on $P$ or all on $u$ when differentiating the equation satisfied by $u$; the argument is then as in \cite[Lemma 6.10]{Hamilton1982}. The details are left to the reader.
\end{proof}

To prove Theorem \ref{thm:smooth-soln} we shall work with graded Fr\'echet spaces where the grading comes from Folland-Stein norms on functions on $S^3$. To this end we observe the following.

\begin{theorem}\label{thm:tameness-of-FS}
Endow $C^{\infty}(S^3,\mathbb{C})$ with the structure of a graded Fr\'echet space using the Folland-Stein norms $\norm\,\cdot\,\norm_s$ for all $s\in \{0,1,2,\ldots\}$. With this structure $C^{\infty}(S^3,\mathbb{C})$ is a tame Fr\'echet space.
\end{theorem}
\begin{remark}
Note that the tame structure on $C^{\infty}(S^3,\mathbb{C})$ defined in Theorem \ref{thm:tameness-of-FS} is not equivalent to the standard one induced by the scale of standard ($L^2$) Sobolev norms.
\end{remark}
\begin{proof}
We need to establish the existence of tame linear maps $L:C^{\infty}(S^3,\mathbb{C}) \to \Sigma (\mathcal{B})$ and $M:\Sigma (\mathcal{B})\to C^{\infty}(S^3,\mathbb{C})$ such that $ML=\mathrm{id}$, where $\mathcal{B}$ is some Banach space and $\Sigma (\mathcal{B})$ is the Fr\'echet space of exponentially decreasing sequences in $\mathcal{B}$, i.e.\ the space of sequences $(a_n)$ such that $\norm (a_n)\norm_{\Sigma (\mathcal{B}),s}^2 = \sum_{n=0}^{\infty} 2^{sn} \norm a_n \norm_{\mathcal{B}}^2<\infty$ for all $s$ endowed with the scale of norms $\norm \,\cdot\,\norm_{\Sigma (\mathcal{B}),s}$ \cite{Hamilton1982BAMS}. We take $\mathcal{B}=L^2(S^3,\mathbb{C})$. Expanding an element $u \in C^{\infty}(S^3,\mathbb{C})$ in spherical harmonics as $u=\sum_{p,q\geq 0} u_{p,q}$ we let $u_n$ denote the sum of the $u_{p,q}$ where $2^n \leq 2pq+p+q+1 < 2^{n+1}$ when $n\geq 1$ and let $u_0=u_{0,0}$. Then we define the map $L:C^{\infty}(S^3,\mathbb{C}) \to \Sigma (\mathcal{B})$ by $L(u) = (u_n)$. Then
\beq
\begin{aligned}
\norm L(u)\norm_{\Sigma (\mathcal{B}),s}^2 &= \sum_{n=0}^{\infty} 2^{sn} \norm u_n \norm_{L^2}^2 \\
&= \norm u_0\norm_{L^2}^2 + \sum_{n=1}^{\infty}  \sum_{2^n \leq 2pq+p+q+1 < 2^{n+1}} 2^{sn} \norm u_{p,q}\norm_{L^2}^2 \\
&\leq  \norm u_0\norm_{L^2}^2 + \sum_{n=1}^{\infty}  \sum_{2^n \leq 2pq+p+q+1 < 2^{n+1}} (2pq+p+q+1)^s \norm u_{p,q}\norm_{L^2}^2 \\
&=\sum_{p,q\geq 0} (2pq+p+q+1)^s \norm u_{p,q}\norm_{L^2}^2\\
&\leq C_s \norm u\norm_s^2
\end{aligned}
\eeq
for each $s\in \{0,1,2,\ldots\}$. Hence $L$ is tame. To define the map $M$ we first let 
\beq
 \pi_n : L^2(S^3,\mathbb{C}) \to \bigoplus_{2^n \leq 2pq+p+q+1 < 2^{n+1}} H_{p,q}
\eeq
denote the orthogonal projection for $n\geq 1$ and let $\pi_0$ denote the orthogonal projection onto $H_{0,0}$. The map $M$ is then defined by $M((a_n))= \sum_n \pi_n a_n$. It's clear that, by definition, $ML=\mathrm{id}$. Given a sequence $(a_n)\in\Sigma(\mathcal{B})$ we decompose each $a_n$ in spherical harmonics as $a_n = \sum_{p,q} [a_n]_{p,q}$. To see that $M$ is tame we note that $\pi_n a_n$ is orthogonal to $\pi_m a_m$ for $m\neq n$ and hence
\beq
\begin{aligned}
\norm M((a_n))\norm_{s}^2 &= \sum_{n=0}^{\infty} \norm \pi_n a_n \norm_s^2\\
& \leq C_s\left(\norm \pi_0 a_0 \norm_{L^2}^2 + \sum_{n=1}^{\infty} \sum_{2^n \leq 2pq+p+q+1 < 2^{n+1}} (2pq+p+q+1)^s \norm [a_n]_{p,q} \norm_{L^2}^2\right)  \\
& \leq C_s\left(\norm a_0 \norm_{L^2}^2 + \sum_{n=1}^{\infty}  \sum_{2^n \leq 2pq+p+q+1 < 2^{n+1}}  2^{(n+1)s}\norm [a_n]_{p,q} \norm_{L^2}^2\right) \\
& \leq C_s 2^s\left(\norm a_0 \norm_{L^2}^2 + \sum_{n=1}^{\infty} 2^{ns} \norm a_n \norm_{L^2}^2\right) \\
&= C'_s \norm (a_n)\norm_{\Sigma(\mathcal{B}),s}^2,
\end{aligned}
\eeq
for each $s\in\{0,1,2,\ldots\}$. This proves the result.
\end{proof}

Arguing similarly to the above we can endow the space $C^{\infty}(S^3\times[0,T_0],\mathbb{C})$ with a tame Fr\'echet space structure using the norms on the spaces $\bigcap_{k=0}^s H^k([0,T_0];H_{FS}^{2s-2k})$ for each $s\in \{0,1,2,\ldots\}$. Such a tame Fr\'echet space structure was used, e.g., by Hamilton in his Ricci flow paper \cite{Hamilton1982} (to establish short time existence of the flow) except that he used ordinary Sobolev spaces rather than Folland-Stein spaces for the spatial regularity. The idea is to describe the norms equivalently in terms of a dyadic decomposition of the functions expressed in terms of spherical harmonics with respect to the $S^3$ and the Fourier dual variable in the $[0,T_0]$-direction. In more detail one first uses an extension operator to include $C^{\infty}(S^3\times[0,T_0],\mathbb{C})$ into $C^{\infty}_0(S^3\times\mathbb{R},\mathbb{C})$, then one takes the Fourier transform with respect to the $\mathbb{R}$ factor and decomposes with respect to spherical harmonics on $S^3$ to obtain functions of $\omega\in \mathbb{R}$ and $p,q\in \{0,1,2,\ldots\}$ (the extension operator respecting the tame structure can be constructed as in the proof of Corollary 1.3.7 in \cite{Hamilton1982}). The $s$-norms we are using become equivalent to weighted $L^2$ norms with the multiplier $(2pq+p+q+|\omega|+2)^s$, which treats two spatial Folland-Stein derivatives as being on the same level as one time derivative (to see the equivalence with the norm on $\bigcap_{k=0}^s H^k([0,T_0];H_{FS}^{2s-2k})$ for each $s$ consider the binomial expansion of $((2pq+p+q+1) + (|\omega|+1))^{2s}$). The proof of tameness is then highly analogous to the proof of Theorem \ref{thm:tameness-of-FS}, where now we dyadically decompose our functions with respect to $2pq+p+q+|\omega|+2$ rather than $2pq+p+q+1$.

\begin{proof}[Proof of Theorem \ref{thm:smooth-soln}]
We start by recalling that \eqref{Nabla2ft} is equivalent to
\beq
(\nabla_1)^2 f + L_{\varphi}f  = \frac{d}{dt}\varphi
\eeq
where $f=f_t$, $\varphi=\varphi_{11}(t)$ and $L_{\vf}$ is as in \eqref{eq:L}. As before let $\mathcal{P}_1$ denote the $L^2$ orthogonal projection onto the image of $(\nabla_1)^2$ and let $\mathcal{P}_2$ denote the complementary orthogonal projection onto the kernel of $(\nabla_{\oneb})^2$; recall that if $u=\sum_{p,q} u_{p,q}$ is the spherical harmonic decomposition of $u$, then $\mathcal{P}_1(u) = \sum_{p\geq 0,q\geq 2} u_{p,q}$ and $\mathcal{P}_2(u) = \sum_{p\geq 0,q=0,1} u_{p,q}$. It is easy to see that the above displayed equation will hold with $\varphi(t) = t\vfd + \mu(t)$ (and $\mathcal{P}_1\mu(t)=0)$, if and only if
\beq
\begin{aligned}
(\nabla_1)^2 f + \mathcal{P}_1 ( L_{\varphi} f ) &= \vfd \\
\mu' - \mathcal{P}_2 ( L_{\varphi} f ) & = 0
\end{aligned}
\eeq
where $\mu'=\frac{d}{dt}\mu$. Writing $f=\lambda+\tilde{f}$ and noting that $L_{\varphi}f = L_{\varphi}\tilde{f} - i\lambda \nabla_0\varphi$ we may write this system as
\beq\label{eqn-sub-elliptic-parabolic-system}
\begin{aligned}
(\nabla_1)^2 \tilde{f} + \mathcal{P}_1 ( L_{\varphi} \tilde{f})  - \vfd - i\lambda t\nabla_0 \vfd &= 0\\
(\tfrac{d}{dt} + i\lambda\nabla_0)\mu - \mathcal{P}_2 ( L_{\varphi} \tilde{f} )  & = 0.
\end{aligned}
\eeq
Viewing $\vfd$ as a parameter, we now show that \eqref{eqn-sub-elliptic-parabolic-system} can be solved for $\tilde{f}$ and $\mu$ (depending on $\vfd$) near $\vfd=0$. Fix $s_0\geq 10$ and consider the operator $\mathcal{F}$ taking triples $(\tilde{f},\mu,\vfd)$ with
\begin{itemize}
\item $\tilde{f}\in \bigcap_{k=0}^{s_0} H^k([0,T_0];H_{FS}^{2s_0-2k+2}))$ orthogonal to the kernel of $(\nabla_1)^2$, 
\item $\mu=\mathcal{P}_2\mu \in \bigcap_{k=0}^{s_0+1} H^k([0,T_0];H_{FS}^{2s_0-2k+2})$ such that $\mu(0)=0$, and
\item $\vfd = \mathcal{P}_1\vfd \in H_{FS}^{2s_0+2}$ (viewed as a constant function in $t$)
\end{itemize}
to pairs $(a,b)$ with 
\begin{itemize}
\item $a=\mathcal{P}_1a \in \bigcap_{k=0}^{s_0} H^k([0,T_0];H_{FS}^{2s_0-2k}))$, and 
\item $b=\mathcal{P}_2 b \in \bigcap_{k=0}^{s_0} H^k([0,T_0];H_{FS}^{2s_0-2k})$
\end{itemize}
given by
\beq
\mathcal{F}(\tilde{f},\mu,\vfd) = \left( (\nabla_1)^2 \tilde{f} + \mathcal{P}_1 ( L_{\varphi} \tilde{f})- \vfd - i\lambda t\nabla_0 \vfd,\, (\tfrac{d}{dt} + i\lambda\nabla_0)\mu  - \mathcal{P}_2 ( L_{\varphi} \tilde{f} ) \right).
\eeq
We let $\mathcal{B}_1$ be the set of all $(\tilde{f},\mu)$ as above, so that the domain of $\mathcal{F}$ is $\mathcal{B}_1\times \mathcal{P}_1H_{FS}^{2s_0+2}$. Let $\mathcal{B}_2$ denote codomain of $\mathcal{F}$ as specified above. Linearizing $\mathcal{F}$ around $(0,0,0)$ we obtain
\beq
D\mathcal{F}_{(0,0,0)} (\tilde{f},\mu,0) = \left( (\nabla_1)^2 \tilde{f} ,\, (\tfrac{d}{dt} + i\lambda\nabla_0)\mu  \right).
\eeq
Let $(a,b)\in \mathcal{B}_2$. By Lemma \ref{lem:Q_0}, if $T_0>0$ is sufficiently small, then $(\nabla_1)^2 \tilde{f} =a$ has a unique solution $\tilde{f}\in \bigcap_{k=0}^{s_0} H^k([0,T_0];H_{FS}^{2s_0-2k+2}))$ orthogonal to the kernel of $(\nabla_1)^2$. Also, by Lemma \ref{lem:Reeb-heat-equation} there exist a unique $\mu$ with $\mu=\mathcal{P}_2\mu \in \bigcap_{k=0}^{s_0+1} H^k([0,T_0];H_{FS}^{2s_0-2k+2})$ and $\mu(0)=0$ such that $(\tfrac{d}{dt} + i\lambda\nabla_0)\mu = b$. Hence the map $D\mathcal{F}_{(0,0,0)}$ is a bijection from $\mathcal{B}_1\times\{0\}$ to $\mathcal{B}_2$. Thus by the Banach space implicit function theorem there is a neighborhood $\mathcal{U}\times \mathcal{V}$ of the origin in $\mathcal{B}_1\times \mathcal{P}_1H_{FS}^{2s_0+2}$ and a map $\mathcal{S}:\mathcal{V}\to \mathcal{B}_1$ such that $\mathcal{F}(\mathcal{S}(\vfd),\vfd)=0$ for all $\vfd \in \mathcal{V}$.

Hence, given $\vfd \in \mathcal{V}$ there exists  $\tilde{f}\in \bigcap_{k=0}^{s_0} H^k([0,T_0];H_{FS}^{2s_0-2k+2}))$ orthogonal to the kernel of $(\nabla_1)^2$ and $\mu=\mathcal{P}_2\mu \in \bigcap_{k=0}^{s_0+1} H^k([0,T_0];H_{FS}^{2s_0-2k+2})$ with $\mu(0)=0$ solving \eqref{eqn-sub-elliptic-parabolic-system}. In order to show that if $\vfd \in \mathcal{V}$ is $C^{\infty}$ and sufficiently small then $\tilde{f}$ and $\mu$ are also $C^{\infty}$ in space and time (after possibly shrinking $T_0>0$) we are going to make a similar argument again, but using the more complicated Nash-Moser framework of \cite{Hamilton1982BAMS}. We retained the above finite regularity result since its proof is much simpler; nevertheless, this result is subsumed by what follows.

Let $\mathcal{A}_1$ denote the space of those $(\tilde{f},\mu)$ that are $C^{\infty}$ in space and time, with $\tilde{f}$ orthogonal to the kernel of $(\nabla_1)^2$ and $\mu=\mathcal{P}_2\mu$, viewed as a tame Fr\'echet space with respect to the $\bigcap_{k=0}^{s} H^k([0,T_0];H_{FS}^{2s-2k})$-norms on each of the two factors. Similarly, let $\mathcal{A}_2$ denote the space of those $(a,b)$ that are $C^{\infty}$ in space and time, with $a=\mathcal{P}_1a$ and $b=\mathcal{P}_2b$, viewed as a tame Fr\'echet space in analogous fashion. Endow $\mathcal{P}_1C^{\infty}(S^3,\mathbb{C})$ with the tame Fr\'echet structure induced by the scale of Folland-Stein norms $H^{2s}_{FS}$. Then $\mathcal{F}: \mathcal{A}_1\times \mathcal{P}_1C^{\infty}(S^3,\mathbb{C}) \to \mathcal{A}_2$ is clearly a tame map. Let $(\tilde{f}_0,\mu_0,\vfd_0)\in \mathcal{A}_1\times \mathcal{P}_1C^{\infty}(S^3,\mathbb{C})$ and consider the (partial) derivative $D\mathcal{F}_{(\tilde{f}_0,\mu_0,\vfd_0)} (\tilde{f},\mu,0)$. We observe that $D\mathcal{F}_{(\tilde{f}_0,\mu_0,\vfd_0)} (\tilde{f},\mu,0) = (a,b)$ can be written as
\begin{align}
\label{eqn:system-1}
(\nabla_1)^2 \tilde{f} + \mathcal{P}_1 ( L_{\vf_0} \tilde{f} + N_{\tilde{f}_0,\mu_0,\vfd_0}\mu) &= a, \\
\label{eqn:system-2}
(\tfrac{d}{dt} + i\lambda\nabla_0)\mu - \mathcal{P}_2 ( L_{\vf_0} \tilde{f} + N_{\tilde{f}_0,\mu_0,\vfd_0}\mu) &= b,
\end{align}
where $\vf_0 = t\vfd_0 + \mu_0$ and $N_{\tilde{f}_0,\mu_0,\vfd_0}\mu = \left.\frac{d}{ds}\right|_{s=0} L_{\vf_0 + s\mu}\tilde{f}_0$. By an implicit function theorem argument similar to the above, there exists a neighborhood $\mathcal{W}$ of the origin in $\mathcal{B}_1\times \mathcal{P}_1H_{FS}^{2s_0+2}$ and a map $\mathcal{T}:\mathcal{W} \times \mathcal{B}_2 \to \mathcal{B}_1$ such that $(\tilde{f},\mu)= \mathcal{T}(\tilde{f}_0,\mu_0,\vfd_0,a,b)$ solves \eqref{eqn:system-1}--\eqref{eqn:system-2}. Let $\mathcal{W}'= \mathcal{W}\cap (\mathcal{A}_1 \times \mathcal{P}_1C^{\infty}(S^3,\mathbb{C}))$. We need to show that the restriction of $\tilde{\mathcal{S}}$ to $\mathcal{W}'\times \mathcal{A}_2$ is a smooth tame map with values in $\mathcal{A}_1$. To do this we first use \eqref{eqn:system-1} to write $\tilde{f}$ as a function of $\mu$ which then allows to view \eqref{eqn:system-2} as an equation for $\mu$ alone. Proposition \ref{lem:perturbed-Reeb-heat-equation} then gives us smoothness and tame estimates for $\mu$. Going back to the expression for $\tilde{f}$ as a function of $\mu$ we then easily obtain smoothness and tame estimates for $\tilde{f}$. In detail, from \eqref{eqn:system-1} it follows that
\beq 
(I-A)\tilde{f} = (\overline{\mathcal{Q}_0}Z_{\oneb})^2\left(a-\mathcal{P}_1N_{\tilde{f}_0,\mu_0,\vfd_0}\mu\right),
\eeq
where $A = -  (\overline{\mathcal{Q}_0}Z_{\oneb})^2\mathcal{P}_1L_{\varphi_0}$. From \eqref{eq:L} and Lemma \eqref{lem:Q_0} it's easy to see that $I-A$ is invertible on $L^2$ provided $\vf_0$ is taken to be sufficiently small in the $C^1$-norm. Moreover, it follows from \cite[Theorem 1.2.2]{Ponge2008} that the Heisenberg principal symbol of $I-A$ is invertible provided $\vf_0$ is taken to be sufficiently small in the $L^{\infty}$-norm (since then the corresponding model operator on each tangent space will be invertible in $L^2$) and hence by \cite[Propositions 5.4.2 and 5.5.9]{Ponge2008} one has a parametrix and corresponding (sub)elliptic estimates 
\beq
\norm u \norm_s \leq C_s\left(\norm(I-A)u \norm_s +\norm u\norm_0\right).
\eeq
It follows that $(I-A)^{-1}$ is a $0$th order Heisenberg pseudodifferential operator and hence (again by \cite{Ponge2008}) we have the stronger estimates
\beq
\norm (I-A)^{-1}u \norm_s \leq C_s\norm u \norm_s
\eeq
for all $s$. Hence $\tilde{a}=(I-A)^{-1}(\overline{\mathcal{Q}_0}Z_{\oneb})^2a$ is a smooth function and 
\beq \label{eq:f-tilde}
\tilde{f} = \tilde{a} - (I-A)^{-1}(\overline{\mathcal{Q}_0}Z_{\oneb})^2\mathcal{P}_1N_{\tilde{f}_0,\mu_0,\vfd_0}\mu.
\eeq
Since $(I-A)^{-1}(\overline{\mathcal{Q}_0}Z_{\oneb})^2\mathcal{P}_1N_{\tilde{f}_0,\mu_0,\vfd_0}$ is a $0$th order Heisenberg pseudodifferential operator it follows that we have the \emph{a priori} estimate
\beq\label{eq:f-tilde-norm}
\norm \tilde{f}\norm_s \leq C_s\left(\norm a\norm_s + \norm \mu \norm_s \right),
\eeq
where the constant $C_s$ depends on the $H_{FS}^{s+5}$-norms of $\tilde{f}_0$, $\mu_0$ and $\vfd_0$. Equation \eqref{eq:f-tilde} allows us to write \eqref{eqn:system-2} in the form
\beq
(\tfrac{d}{dt} + i\lambda\nabla_0)\mu + P_{\tilde{f}_0,\mu_0,\vfd_0}\mu = \tilde{b}, 
\eeq
where $\tilde{b}$ is $C^{\infty}$ and $P_{\tilde{f}_0,\mu_0,\vfd_0}$ is a second order Heisenberg pseudodifferential operator with $P_{0,0,0}=0$. By taking $\tilde{f}_0$, $\mu_0$ and $\vfd_0$ to be sufficiently small in the $C^2$-norm we can ensure that $P_{\tilde{f}_0,\mu_0,\vfd_0}$ satisfies \eqref{eq:small-perturbation} and hence by Lemma \ref{lem:perturbed-Reeb-heat-equation} we find that $\mu$ is $C^{\infty}$ and satisfies the tame estimates
\beq \label{eq:mu-tame-estimate}
\sum_{k=0}^{s+1}\norm \mu \norm_{H^k([0,T_0];H_{FS}^{2s+2-2k})} \leq C_s \left( \sum_{k=0}^{s}\norm a \norm_{H^k([0,T_0];H_{FS}^{2s-2k})} + \sum_{k=0}^{s}\norm b \norm_{H^k([0,T_0];H_{FS}^{2s-2k})} \right)
\eeq
where the constant $C_s$ depends on the $H_{FS}^{s+5}$-norms of $\tilde{f}_0$, $\mu_0$ and $\vfd_0$. Smoothness of $\tilde{f}$ then follows from \eqref{eq:f-tilde} and a similar tame estimate then follows by integrating \eqref{eq:f-tilde-norm} over $[0,T_0]$ and using \eqref{eq:mu-tame-estimate}.

It follows that the solution operator $\mathcal{T}$ for the linearized equations \eqref{eqn:system-1}--\eqref{eqn:system-2} restricts to $\mathcal{W}'\times \mathcal{A}_2$ to give a smooth tame map with values in $\mathcal{A}_1$ (after shrinking $\mathcal{W}'$ if necessary by requiring that the $C^2$-norms of $\tilde{f}_0$, $\mu_0$ and $\vfd_0$ be sufficiently small). It follows from the Nash-Moser implicit function theorem \cite[Theorem 3.3.1]{Hamilton1982BAMS} that there exists a neighborhood $\mathcal{U}_{\mathcal{A}}\times \mathcal{V}_{\mathcal{A}}$ of the origin in $\mathcal{A}_1\times \mathcal{P}_1C^{\infty}(S^3,\mathbb{C})$ and a map $\mathcal{S}_{\mathcal{A}}:\mathcal{V}_{\mathcal{A}}\to \mathcal{A}_1$ such that $\mathcal{F}(\mathcal{S}_{\mathcal{A}}(\vfd),\vfd)=0$ for all $\vfd \in \mathcal{V}_{\mathcal{A}}$. (Clearly $\mathcal{S}_{\mathcal{A}}$ is the restriction of the map $\mathcal{S}$ from the finite regularity case.) This proves the result.
\end{proof}

We conclude this section by observing that Theorem \ref{thm:vft} now follows from Theorems \ref{thm:smooth-soln} and \ref{P:embed} (cf.\ also Remark \ref{rem:analyticity-of-psi-t} for Theorem \ref{thm:vft} (ii)).

\section{Proof of Theorem \ref{thm:CL-slice-mod} and Theorem \ref{thm:slice}} \label{sec:proofs-slice-thms}

In this section we shall prove the slice theorems, Theorem \ref{thm:CL-slice-mod} and  Theorem \ref{thm:slice}, described in the introduction. The proof of  Theorem \ref{thm:CL-slice-mod} is an application of the Nash-Moser inverse function theorem (along the lines of \cite{ChengLee1995}, Theorem B). In the following the graded Fr\'echet spaces are all defined with respect to the scale of standard $L^2$-based Sobolev spaces, as in \cite{ChengLee1995}.
\begin{proof}[Proof of Theorem \ref{thm:CL-slice-mod}]
In a slight abuse of notation we identify
$\mathcal{C}\times (\mathfrak{D}'_{BE} \oplus \mathfrak{D}_0^{\perp})\times\{y_0\}$ with $\mathcal{C}\times (\mathfrak{D}'_{BE} \oplus \mathfrak{D}_0^{\perp})$.
One can define a natural action of $\mathcal{C}$ on $\mathcal{C}\times (\mathfrak{D}'_{BE} \oplus \mathfrak{D}_0^{\perp})$ so that the map $P$ in the statement of the theorem is equivariant (cf. \cite{ChengLee1995}, pp.\ 1284-1285). In order to check the conditions of the Nash-Moser inverse function theorem we need to consider the linearization of the map $P$ in Theorem \ref{thm:CL-slice-mod} at all points in a neighborhood of $(\mathrm{id},0)$ in $\mathcal{C}\times (\mathfrak{D}'_{BE} \oplus \mathfrak{D}_0^{\perp})$; by the $\mathcal{C}$-equivariance of $P$ it will suffice to consider only points of the form $(\mathrm{id},\varphi)$, as in the proof of Theorems A and B in \cite{ChengLee1995}. Recall that $\mathfrak{D}^m \cong \mathfrak{D} \times Y$, where $Y$ is the CR Cartan bundle of $S^3$ (which may be identified with $\mathrm{SU}(2,1)$ modulo its finite center). As in Cheng-Lee \cite{ChengLee1995} we write $P=(P_1,P_2)$ where $P_1$ takes values in $\mathfrak{D}$ and $P_2$ takes values in $Y$. In order to compute the linearization of $P=(P_1,P_2)$ we will make use of the local smooth tame parametrization $\Psi_e : C^{\infty}(S^3,\mathbb{R})\to \mathcal{C}$ of the contact diffeomorphism group in a neighborhood of the identity given in Theorem C of \cite{ChengLee1995} (we identify these two spaces in the calculation below, and refer to points in $C^{\infty}(S^3,\mathbb{R})$ rather than $\mathcal{C}$; so, e.g., we write $0$ instead of $\mathrm{id}$).
Using this parametrization, the linearization of $P_1$ at $(0,\varphi)$ is given by
\begin{equation}\label{eq:lin-P_1}
DP_1 (0,\varphi) (\dot{f},\dot{\varphi}) = ((\nabla_1)^2+L_{\vf})\dot{f} + \dot{\varphi}
\end{equation}
for $\dot{f}\in C^{\infty}(S^3,\mathbb{R})$, $\dot{\vf}\in \mathfrak{D}'_{BE} \oplus \mathfrak{D}_0^{\perp}$ where $L_{\vf}$ is as in \eqref{eq:L} (cf. \cite{ChengLee1995}, equation (5.1); note that we are calculating exclusively in terms of the frame $Z_1$). We decompose $C^{\infty}(S^3,\mathbb{R})$ as a direct sum $C_{\mathfrak{CR}}^{\infty}(S^3,\mathbb{R})\oplus C_{\perp}^{\infty}(S^3,\mathbb{R})$ where $C_{\mathfrak{CR}}^{\infty}(S^3,\mathbb{R}) = \ker (\nabla_1)^2 \cap C^{\infty}(S^3,\mathbb{R}) = \bigoplus_{p,q\in \{0,1\}}H_{p,q}\cap C^{\infty}(S^3,\mathbb{R})$ is the $8$-dimensional space of potential functions for the infinitesimal CR automorphisms of the standard $S^3$, and $C_{\perp}^{\infty}(S^3,\mathbb{R}) = \overline{\bigoplus_{p,q\neq 0,1}H_{p,q}}\cap C^{\infty}(S^3,\mathbb{R})$ is the $L^2$ orthogonal complement of $C_{\mathfrak{CR}}^{\infty}(S^3,\mathbb{R})$ in $C^{\infty}(S^3,\mathbb{R})$. We also decompose (cf. \cite[p.\ 833]{BurnsEpstein1990b})
\beq\mathfrak{D} = \mathfrak{D}'_{BE}\oplus (Z_1)^2(C^{\infty}(S^3,\mathbb{R})) \oplus \mathfrak{D}_0^{\perp}
\eeq
and let $\Pi: \mathfrak{D} \to  (Z_1)^2(C^{\infty}(S^3,\mathbb{R}))=(Z_1)^2(C_{\perp}^{\infty}(S^3,\mathbb{R}))$ denote the corresponding projection (the projection is oblique, but is bounded in $H_{FS}^s$ for every $s$, cf.\ \cite[page 833]{BurnsEpstein1990b}). Note that if $\dot{\vf}\in \mathfrak{D}'_{BE} \oplus \mathfrak{D}_0^{\perp}$ then $\Pi \dot{\vf} =0$. We construct a family of inverse maps $VP (0,\varphi)$ to the family of linearized maps $DP (0,\varphi)$ as follows. Given $(K,X)\in \mathfrak{D}\times \mathfrak{su}(2,1) \cong T_{(0,\vf)} \mathfrak{D}^m$ we need to solve uniquely the following linear equations
\begin{align}
\label{eq:PiDP1} \Pi DP_1(0,\vf) (\dot{f},\dot{\vf}) &= (\nabla_1)^2 \dot{g} + \Pi L_{\vf} \dot{g} + \Pi L_{\vf} \dot{h} = \Pi K\\
\label{eq:id-PiDP1}(\mathrm{id}-\Pi) DP_1(0,\vf) (\dot{f},\dot{\vf}) & = \dot{\vf} + (\mathrm{id}-\Pi)L_{\vf} \dot{f} = (\mathrm{id}-\Pi)K\\
\label{eq:DP2}DP_2 (0,\vf) (\dot{f},\dot{\vf}) &= X
\end{align}
where $\dot{f} = \dot{g}+\dot{h}$ with $\dot{h}\in C_{CR}^{\infty}(S^3,\mathbb{R})$ and $\dot{g} \in C_{\perp}^{\infty}(S^3,\mathbb{R})$. As in the proof of Proposition \ref{prop:convergence-ft}, by an elliptic regularity argument the map
\begin{equation}
(\nabla_1)^2 + \Pi L_{\vf} : C_{\perp}^{\infty}(S^3,\mathbb{R}) \to (Z_1)^2(C_{\perp}^{\infty}(S^3,\mathbb{R}))
\end{equation}
has a smooth tame solution operator $((\nabla_1)^2 + \Pi L_{\vf})^{-1}$ for $\vf$ sufficiently small (in the $C^1$ sense) with smooth tame dependence on $\vf$. Using this solution operator we may solve \eqref{eq:PiDP1} for $\dot{g}$, viewing $\dot{h} \in C^{\infty}_{\mathfrak{CR}} (S^3, \mathbb{R})$ as a free $8$-dimensional parameter for now. One may then simply choose $\dot{\vf}$ to satisfy \eqref{eq:id-PiDP1}, again viewing $\dot{h}$ as a parameter. Plugging the solutions for $\dot{g}$ and $\dot{\vf}$ into \eqref{eq:DP2} yields a finite dimensional equation to be solved for $\dot{h}$ in terms of $X$; solvability for small $\vf$ follows easily by the standard finite dimensional inverse function theorem after checking that this map is injective at $(0,\vf)=(0,0)$, where the map becomes an identification between potentials for infinitesimal CR automorphisms and the corresponding elements of $\mathfrak{su}(2,1)$. This establishes the existence of a smooth tame family $VP(0,\vf)$ of inverses to the family $DP(0,\vf)$ of linearized maps, for sufficiently small $\vf$. Part (i) of the theorem now follows by the Nash-Moser inverse function theorem.

Part (ii) follows easily from inspecting the linearized action of the contact diffeomorphisms on the slice.
\end{proof}

We claim that the restriction of the map $P:\mathcal{C}\times (\mathfrak{D}'_{BE} \oplus \mathfrak{D}_0^{\perp})\times \{y_0\} \to \mathfrak{D}^m$ from Theorem \ref{thm:CL-slice-mod} to $P_{emb}:\mathcal{C}\times \mathfrak{D}'_{BE} \times \{y_0\} \to \mathfrak{D}^m$ locally parametrizes the set of marked embeddable deformations of the standard CR sphere. This will be proved in Proposition \ref{prop:emb-slice} below. An argument from \cite{Bland1994} (which will be fleshed out in the proof of Proposition \ref{prop:emb-slice} below) shows that the natural map $\mathcal{C}\times \mathfrak{D}_{BE} \times \{y_0\} \to \mathfrak{D}^m_{emb}$ is surjective; but this map only becomes injective after we further restrict to the map $P_{emb}:\mathcal{C}\times \mathfrak{D}'_{BE} \times \{y_0\} \to \mathfrak{D}^m_{emb}$. In order to show that the restricted map $P_{emb}$ is surjective we need the following lemma. Let $\mathfrak{D}_{\mathrm{cd}}$ denote the set of smooth deformation tensors on the standard CR $3$-sphere whose spherical harmonic decomposition is supported on the critical diagonal, i.e. the deformation tensors $\vf = \sum_{p} \vf_{p,p+4}$. Note that $\mathfrak{D}_{\mathrm{cd}}$ is precisely the space of deformation tensors corresponding to $S^1$-invariant CR structures. Let $\mathfrak{D}'_{\mathrm{cd}} =\{\, \vf\in \mathfrak{D}_{\mathrm{cd}} \,|\, \im  (\nabla^1)^2 \vf =0\, \}$.
\begin{lemma}\label{lem:CL-slice-mod-S1}
Let $\vf_0 \in \mathfrak{D}_{\mathrm{cd}}$ be sufficiently small. Then there exists an $S^1$-equivariant contact diffeomorphism of $S^3$ (unique modulo $S^1$-equivariant automorphisms of the CR sphere) pulling the CR structure  corresponding to $\vf_0$ back to one with deformation tensor $\tilde{\vf}_0 \in \mathfrak{D}'_{\mathrm{cd}}$. Moreover, the contact diffeomorphism can be chosen to smoothly depend on $\vf_0$.
\end{lemma}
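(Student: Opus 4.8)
The plan is to carry out the argument of Theorem~\ref{thm:CL-slice-mod} in the $S^1$-equivariant category, with the critical diagonal $q=p+4$ playing the role occupied there by the full deformation space $\mathfrak{D}$. Let $\mathcal{C}^{S^1}\subset\mathcal{C}$ be the subgroup of contact diffeomorphisms commuting with the Reeb flow $\rho_s$; its Lie algebra is the space of $S^1$-invariant contact Hamiltonian vector fields, identified with the tame subspace $C^\infty_{S^1}(S^3,\bR)=\{\,u\in C^\infty(S^3,\bR):Tu=0\,\}$ of smooth real functions whose spherical harmonic expansion is supported on $\{p=q\}$ (cf.\ \eqref{eqn:Reeb-spherical-harmonics}). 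Because the flow of an $S^1$-invariant contact Hamiltonian vector field is $S^1$-equivariant, the parametrization $\Psi_e$ of $\mathcal{C}$ near the identity from \cite[Thm.~C]{ChengLee1995} restricts to a local tame parametrization of $\mathcal{C}^{S^1}$ by $C^\infty_{S^1}(S^3,\bR)$. A short computation with \eqref{eqn:Reeb-spherical-harmonics} shows that $\rho_s$ acts on the $H_{p,q}$-component of a deformation tensor by a unit scalar, trivial precisely when $q=p+4$; hence $\mathcal{C}^{S^1}$ preserves the (small) $S^1$-invariant CR structures and acts on $\mathfrak{D}_{\mathrm{cd}}$, this action being the restriction of the one in Theorem~\ref{thm:CL-slice-mod}. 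I would then show $\mathfrak{D}'_{\mathrm{cd}}$ is a slice for this action near $0$.

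At the infinitesimal level, by \eqref{eq:lin-P_1} the linearization of the normalization map at $(\mathrm{id},\vf)$, for $\vf\in\mathfrak{D}'_{\mathrm{cd}}$ small, is $(\dot f,\dot\vf)\mapsto\bigl((\nabla_1)^2+L_\vf\bigr)\dot f+\dot\vf$, where now $\dot f\in C^\infty_{S^1}(S^3,\bR)$ and $\dot\vf\in\mathfrak{D}'_{\mathrm{cd}}$; since $\vf$ is $S^1$-invariant, the operator $L_\vf$ of \eqref{eq:L} carries $S^1$-invariant functions into $\mathfrak{D}_{\mathrm{cd}}$. At $\vf=0$ this is $\dot f\mapsto(Z_1)^2\dot f$, and the essential elementary point is that $(Z_1)^2$ carries $H_{p,p}$ isomorphically onto $H_{p-2,p+2}$ for $p\geq2$ and annihilates $H_{0,0}\oplus H_{1,1}$. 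Consequently $(Z_1)^2\bigl(C^\infty_{S^1}(S^3,\bR)\bigr)$ and $\mathfrak{D}'_{\mathrm{cd}}$ --- which, as noted in the introduction, is its $\re\langle\cdot,\cdot\rangle$-orthogonal complement --- are complementary real subspaces of $\mathfrak{D}_{\mathrm{cd}}$, while the kernel $\bigl(H_{0,0}\oplus H_{1,1}\bigr)\cap C^\infty(S^3,\bR)$ is the $4$-dimensional space of potentials for the compact group $\mathrm{Aut}^{S^1}(S^3)\cong U(2)$ of $S^1$-equivariant CR automorphisms of the standard sphere. This group is the stabilizer of $0\in\mathfrak{D}_{\mathrm{cd}}$ in $\mathcal{C}^{S^1}$ and accounts for the ``modulo $S^1$-equivariant automorphisms'' ambiguity in the statement.

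To invert the linearization I would proceed exactly as in the proof of Theorem~\ref{thm:CL-slice-mod}: split $C^\infty_{S^1}(S^3,\bR)=C^\infty_{\mathrm{CR},S^1}\oplus C^\infty_{\perp,S^1}$ into the $4$-dimensional space of automorphism potentials and its complement, let $\Pi$ be the (oblique, tamely bounded) projection of $\mathfrak{D}_{\mathrm{cd}}$ onto $(\nabla_1)^2\bigl(C^\infty_{\perp,S^1}\bigr)$ along $\mathfrak{D}'_{\mathrm{cd}}$, and observe that $(\nabla_1)^2+\Pi L_\vf$ admits a smooth tame inverse on $C^\infty_{\perp,S^1}$ for $\vf$ small in $L^\infty$ by the subelliptic regularity argument used in Proposition~\ref{prop:convergence-ft} and in the proof of Theorem~\ref{thm:CL-slice-mod}. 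That argument restricts verbatim to the $S^1$-invariant subspaces, since every operator involved --- $(\nabla_1)^2$, $L_\vf$ with $\vf\in\mathfrak{D}_{\mathrm{cd}}$, $\overline{\mathcal{Q}_0}$, and the projections $\mathcal{P}_1,\mathcal{P}_2$ --- commutes with $\rho_s$. Solving for $\dot g\in C^\infty_{\perp,S^1}$, putting $\dot\vf:=(\mathrm{id}-\Pi)(K-L_\vf\dot f)\in\mathfrak{D}'_{\mathrm{cd}}$, and leaving $\dot h\in C^\infty_{\mathrm{CR},S^1}$ as a free finite-dimensional parameter (which is pinned down by the remaining equations if one works with marked structures), yields a smooth tame family of inverses to the linearized maps. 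The Nash--Moser inverse function theorem \cite{Hamilton1982} then shows that $\mathcal{C}^{S^1}\times\mathfrak{D}'_{\mathrm{cd}}\to\mathfrak{D}_{\mathrm{cd}}$ is, near $(\mathrm{id},0)$, a smooth tame submersion onto a neighborhood of $0$ whose fibers are single $\mathrm{Aut}^{S^1}(S^3)$-orbits and which admits a smooth tame partial inverse; reading this off gives the normalizing $S^1$-equivariant contact diffeomorphism of $\vf_0$, its uniqueness modulo $S^1$-equivariant automorphisms, and its smooth dependence on $\vf_0$.

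The argument is essentially bookkeeping once Theorem~\ref{thm:CL-slice-mod} is available, so I do not anticipate a genuine obstacle; the two points that truly need checking are (i) the elementary spherical-harmonic fact that $(Z_1)^2\colon H_{p,p}\to H_{p-2,p+2}$ is an isomorphism for $p\geq2$, which is what makes $\mathfrak{D}'_{\mathrm{cd}}$ a slice complement inside $\mathfrak{D}_{\mathrm{cd}}$, and (ii) that the subelliptic solution-operator construction and the Nash--Moser scheme restrict cleanly to the $S^1$-invariant subspaces --- which they do because everything commutes with $\rho_s$. One could alternatively try to deduce the lemma from Theorem~\ref{thm:CL-slice-mod} itself by conjugating the normalizing diffeomorphism of a given $\vf_0\in\mathfrak{D}_{\mathrm{cd}}$ by $\rho_s$ and invoking uniqueness, but the Reeb $S^1$ has no fixed point in the CR Cartan bundle (there is no $S^1$-invariant marking), which makes that route awkward; the direct equivariant argument above is cleaner.
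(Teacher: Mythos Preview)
Your proposal is correct and follows essentially the same approach as the paper. The only simplification in the paper's version is that, since the $S^1$-equivariant automorphism group is compact, it passes at the outset to a slice $\mathcal{W}$ for $\mathcal{C}^{S^1}/\mathrm{Aut}^{S^1}(S^3)$ (parametrized by $\mathfrak{W}=\{f\in C^\infty_{S^1}(S^3,\bR):f_{0,0}=f_{1,1}=0\}$, via Theorem~D of \cite{ChengLee1995}), so that $P_0:\mathcal{W}\times\mathfrak{D}'_{\mathrm{cd}}\to\mathfrak{D}_{\mathrm{cd}}$ becomes a local diffeomorphism and Nash--Moser applies directly without tracking the free parameter $\dot h$ or invoking a submersion statement.
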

\begin{proof}
The argument is to establish a slice theorem essentially as in the proof of Theorem \ref{thm:CL-slice-mod}, but can be made slightly simpler due to the relevant automorphism group now being compact so that markings are not needed (this is analogous to the situation of Theorem A in \cite{ChengLee1995}). Let $\mathcal{C}^{S^1}$ denote the space of $S^1$-equivariant contact diffeomorphisms of $S^3$ and let $H\subseteq\mathrm{Aut}(S^3)$ denote the subgroup of group of $S^1$-equivariant automorphism of the CR sphere $S^3$. By restricting the local parametrization $\Psi_e : C^{\infty}(S^3,\mathbb{R})\to \mathcal{C}$ of the contact diffeomorphism group (in a neighborhood of the identity) given in Theorem C of \cite{ChengLee1995} to $S^1$-invariant functions we obtain a smooth tame parametrization $C^{\infty}(S^3,\mathbb{R})^{S^1} \to \mathcal{C}^{S^1}$ of $\mathcal{C}^{S^1}$ in a neighborhood of the identity. As in Theorem D of \cite{ChengLee1995}, by restricting this map to the space $\mathfrak{W}$ of functions $f$ in $C^{\infty}(S^3,\mathbb{R})$ with spherical harmonic decomposition of the form $f= \sum_{p\geq 2} f_{p,p}$ (i.e. functions $f\in C^{\infty}(S^3,\mathbb{R})^{S^1}$ with $f_{0,0}=f_{1,1}=0$) we obtain, as the image of the restricted map, a local slice $\mathcal{W} \subseteq \mathcal{C}^{S^1}$ for the coset space $\mathcal{C}^{S^1}/H$.

Let $P_0 : \mathcal{W}\times \mathfrak{D}'_{\mathrm{cd}} \to \mathfrak{D}_{\mathrm{cd}}$ denote the natural map (where the contact diffeomorphism acts by pullback on the CR structure corresponding to the deformation tensor). One can define a natural action of $\mathcal{C}^{S^1}$ on $\mathcal{W}\times \mathfrak{D}'_{\mathrm{cd}}$ so that the map $P_0$ is equivariant (cf. \cite{ChengLee1995}, pp.\ 1284-1285). In order to check the conditions of the Nash-Moser inverse function theorem we need to consider the linearization of $P_0$ at all points in a neighborhood of $(\mathrm{id},0)$ in $\mathcal{C}^{S^1}\times \mathfrak{D}_{\mathrm{cd}}$; by the equivariance of $P_0$ it will suffice to consider only points of the form $(\mathrm{id},\varphi)$, as in the proof of Theorem A in \cite{ChengLee1995}. In order to compute the linearization we will make use of the local smooth tame parametrization of $\mathcal{W}$ by the space of functions $\mathfrak{W}$ (we identify these two spaces in the calculation below, and refer to points in $\mathfrak{W}$ rather than $\mathcal{W}$).
Using this parametrization, the linearization of $P_0$ at $(0,\varphi)$ is given by
\begin{equation}\label{eq:lin-P_0}
DP_0 (0,\varphi) (\dot{f},\dot{\varphi}) = ((\nabla_1)^2+L_{\vf})\dot{f} + \dot{\varphi}
\end{equation}
for $\dot{f}\in \mathfrak{W}$, $\dot{\vf}\in \mathfrak{D}'_{cd}$ where $L_{\vf}$ is as in \eqref{eq:L} (cf. \cite{ChengLee1995}, equation (5.1)). We construct a family of inverse maps $VP_0 (0,\varphi)$ to the family of linearized maps $DP_0 (0,\varphi)$ as follows. Let $\Pi_0 : \mathfrak{D}_{cd} \to \mathfrak{D}'_{cd}{}^{\perp}  \subseteq \mathfrak{D}_{cd}$ denote the $L^2$ orthogonal projection. (Note that $\mathfrak{D}'_{cd}{}^{\perp}$ is the image of $\mathfrak{W}$, or equivalently of $C^{\infty}(S^3,\mathbb{R})^{S^1}$, under $(Z_1)^2$.) For $\chi\in \mathfrak{D}_{cd}$ we write $\chi = \chi_1+\chi_2$ where $\chi_1 = \Pi_0 \chi$ and $\chi_2 = (\mathrm{id}-\Pi_0)\chi \in \mathfrak{D}'_{cd}$. Given $\chi\in \mathfrak{D}_{cd}$ we first solve
$$
((\nabla_1)^2 + \Pi_0 L_{\vf})\dot{f} = \chi_1
$$
using the same argument as in the proof of Proposition \ref{prop:convergence-ft}. As in the proof of Proposition \ref{prop:convergence-ft}, by an elliptic regularity argument the map
$((\nabla_1)^2+ \Pi_0 L_{\vf}) : \mathfrak{W}\to \mathfrak{D}'_{cd}{}^{\perp}$ has a smooth tame inverse for sufficiently small $\vf$. Since we are free to choose $\dot{\varphi} \in \mathfrak{D}'_{cd}$ to solve the $(\mathrm{id}-\Pi_0)$ projection of
$$
((\nabla_1)^2+L_{\vf})\dot{f} + \dot{\varphi} = \chi
$$
we obtain a smooth family of inverses $VP_0 (0,\varphi)$.
Thus, by the Nash-Moser inverse function theorem, given any sufficiently small deformation tensor $\vf_0$ there exists a (unique small) $S^1$-equivariant contact diffeomorphism (in $\mathcal{W}$) pulling the corresponding CR structure back to one with deformation tensor $\tilde{\vf}_0 \in \mathfrak{D}'_{cd}$.

\end{proof}

\begin{proposition}\label{prop:emb-slice}
Fix any marking $y_0$ of the standard CR sphere. The natural map $P_{emb}:\mathcal{C}\times \mathfrak{D}'_{BE}\times \{y_0\} \to \mathfrak{D}^m$ is a local bijection from an open neighborhood of $(\mathrm{id},0,y_0)$ to an open neighborhood of $(0,y_0)$ in the subset $\mathfrak{D}^m_{emb}$ of marked embeddable deformations of the standard CR $3$-sphere.
\end{proposition}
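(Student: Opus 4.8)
The plan is to prove this by combining Theorem \ref{thm:CL-slice-mod} with the normalization results already established, showing separately that $P_{emb}$ is injective, that its image lies in $\mathfrak{D}^m_{emb}$, and that its image is a full neighborhood of $(0,y_0)$ in $\mathfrak{D}^m_{emb}$.

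\emph{Injectivity.} This is immediate from Theorem \ref{thm:CL-slice-mod}: since $\mathfrak{D}'_{BE} \subseteq \mathfrak{D}'_{BE} \oplus \mathfrak{D}_0^{\perp}$, the map $P_{emb}$ is the restriction of the local diffeomorphism $P$ to a submanifold, hence it is injective (and a smooth tame immersion) in a neighborhood of $(\mathrm{id},0,y_0)$. Part (ii) of Theorem \ref{thm:CL-slice-mod}, restricted to $\mathfrak{D}'_{BE}$, then also gives part (ii) of Theorem \ref{thm:slice}.

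\emph{The image lies in $\mathfrak{D}^m_{emb}$.} We must show that every $\vf \in \mathfrak{D}'_{BE}$ sufficiently small is embeddable, since the property of being embeddable is preserved under pullback by contact diffeomorphisms. Since $\mathfrak{D}'_{BE} \subseteq \mathfrak{D}_{BE}$, this follows from Theorem \ref{thm:BE-parametric} (or already from the Burns-Epstein result it recovers): for $\vf \in \mathfrak{D}_{BE}$ small there is a smooth embedding $\Phi_1 : S^3 \to \mathbb{C}^2$ realizing the deformation $\vf$. (Alternatively, invoke Theorem \ref{thm:lambda-convergence} with $|\lambda| \geq 1$, $R > 2$ to get a solution $f_t$ of the tangency equation with $\re f_t$ of strict sign, then apply Theorem \ref{P:embed}.) Thus $P_{emb}$ maps into $\mathfrak{D}^m_{emb}$.

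\emph{Surjectivity onto a neighborhood of $(0,y_0)$ in $\mathfrak{D}^m_{emb}$.} This is the main obstacle, and the crux is to show that given any small embeddable deformation tensor $\vf$ there is a contact diffeomorphism taking it into $\mathfrak{D}'_{BE}$. The strategy: apply Theorem \ref{thm:CL-slice-mod} to write $\vf$ (or rather its marked version) in the form $P(\Psi, \vf_{BE} + \vf_{\perp}, y_0)$ with $\vf_{BE} \in \mathfrak{D}'_{BE}$ and $\vf_\perp \in \mathfrak{D}_0^{\perp}$; since embeddability is contact-diffeomorphism invariant, $\vf_{BE} + \vf_\perp$ is itself embeddable. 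It then suffices to prove that a sufficiently small embeddable $\psi \in \mathfrak{D}'_{BE} \oplus \mathfrak{D}_0^{\perp}$ must in fact lie in $\mathfrak{D}'_{BE}$, i.e.\ $\vf_\perp = 0$. This is where the spherical harmonic analysis enters. The Burns-Epstein argument (as sharpened via \cite{Bland1994} and \cite{BurnsEpstein1990b}) shows that for the $S^1$-invariant circular representation, nonnegativity of the relevant Fourier coefficients of $\overline{\vf}$ is necessary for embeddability; combined with Lemma \ref{lem:CL-slice-mod-S1} (normalizing the critical-diagonal part by an $S^1$-equivariant contact diffeomorphism into $\mathfrak{D}'_{\mathrm{cd}}$) and a careful bookkeeping of which $H_{p,q}$-components of $\psi$ can be nonzero, one forces $\psi \in \mathfrak{D}'_{BE}$. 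I expect this to require: (1) decomposing $\psi = \psi_{BE} + \psi_\perp$ and using that the $H_{p,q}$-components with $q = 0, 1$ (contained in $\mathfrak{D}_0^\perp$) obstruct even infinitesimal embeddability unless they are "absorbed" by a contact diffeomorphism, which at the nonlinear level they cannot be (the linearized action \eqref{eq:linearization-cont-diff} only adds terms of the form $iZ_1 Z_1 g$, which have vanishing $H_{p,q}$-components for $q = 0, 1$); (2) invoking the positivity/$G_\delta$ results of \cite{BurnsEpstein1990b, Epstein1998} that the nonembeddable deformations are dense and open in $(\mathfrak{D}'_{BE} \oplus \mathfrak{D}_0^\perp) \setminus \mathfrak{D}'_{BE}$. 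The hard step is upgrading "nonembeddable deformations are generic in the complement of $\mathfrak{D}'_{BE}$" to "the complement consists entirely of nonembeddable deformations," which is precisely the content of Theorem \ref{thm:slice}; here one should exploit that $P$ is a genuine local diffeomorphism, so $\mathfrak{D}^m_{emb}$ near $(0,y_0)$ corresponds under $P^{-1}$ to a subset $\mathcal{C} \times E \times \{y_0\}$ with $E \subseteq \mathfrak{D}'_{BE} \oplus \mathfrak{D}_0^\perp$, and $E$ must be both $\mathcal{C}$-saturated (trivially, as $P$ is equivariant) and contain $\mathfrak{D}'_{BE}$; then the openness of the nonembeddable set in the complement together with $\mathfrak{D}'_{BE} \subseteq \overline{E}$ and a connectedness/continuity argument along rays $t \mapsto t(\psi_{BE} + \psi_\perp)$ forces $E = \mathfrak{D}'_{BE}$ locally. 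Once surjectivity onto a neighborhood is established, together with injectivity and openness of the immersion, $P_{emb}$ is a local bijection onto a neighborhood of $(0,y_0)$ in $\mathfrak{D}^m_{emb}$, as claimed.
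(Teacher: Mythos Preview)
Your treatment of injectivity and of the containment of the image in $\mathfrak{D}^m_{emb}$ is fine and matches the paper. The surjectivity argument, however, has a genuine gap.

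You correctly reduce surjectivity to showing that a small embeddable $\psi \in \mathfrak{D}'_{BE}\oplus\mathfrak{D}_0^{\perp}$ must already lie in $\mathfrak{D}'_{BE}$. But then you do not actually prove this. You invoke the Burns--Epstein/Epstein results that nonembeddable deformations form an open dense $G_{\delta}$ in $(\mathfrak{D}'_{BE}\oplus\mathfrak{D}_0^{\perp})\setminus\mathfrak{D}'_{BE}$ and propose a ``connectedness/continuity argument along rays.'' Open and dense does not give you \emph{all}, and no ray or connectedness argument closes that gap: there is no a priori reason an open dense set must be the whole complement. You even say that the hard step ``is precisely the content of Theorem \ref{thm:slice}''---but in the paper Theorem \ref{thm:slice} is \emph{deduced from} Proposition \ref{prop:emb-slice}, so appealing to it here is circular.

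What the paper actually does for surjectivity is constructive and geometric, and quite different from your outline. Given a small embeddable $\vf$, one realizes it as an embedded hypersurface $M\subset\bC^2$ near the sphere, takes the Kobayashi indicatrix $B$ of the enclosed convex domain and Lempert's circular representation $\Psi:B\to\Omega$. By Bland--Duchamp the pullback deformation tensor $\vf_{M,\partial B}$ on $\partial B$ has only positive Fourier coefficients in an $S^1$-invariant frame. One then transports to $S^3$ via an $S^1$-equivariant contact diffeomorphism (using an $S^1$-invariant Gray theorem) and applies Lemma \ref{lem:CL-slice-mod-S1} to normalize the $S^1$-invariant part into $\mathfrak{D}'_{\mathrm{cd}}$. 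A short computation (the composition-of-deformations formula \eqref{eq:chain-of-defs}) then shows that the resulting deformation tensor lies in $\mathfrak{D}'_{BE}$. Finally, varying the basepoint of the indicatrix over $\mathrm{Aut}(S^3)$ handles all markings near $y_0$. The missing idea in your proposal is precisely this use of the circular representation and the Bland--Duchamp Fourier positivity; without it, you have no mechanism to force $\vf_{\perp}=0$.
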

\begin{proof}
That $P_{emb}$ maps $\mathcal{C}\times \mathfrak{D}'_{BE}\times\{y_0\}$ into $\mathfrak{D}^m_{emb}$ follows from \cite[Theorem 5.3]{BurnsEpstein1990b}, cf. also Theorem \ref{thm:BE-parametric} in this paper. Injectivity then follows from Theorem \ref{thm:CL-slice-mod} above. To see that the map is surjective, we first let $\vf$ be an embeddable deformation, with $\vf$ sufficiently small such that there is an embedding $\Phi:S^3\to \mathbb{C}^2$ with image a strictly convex hypersurface near the standard sphere realizing $\vf$ (i.e. such that $\Phi_*(Z_1+\vf_1{}^{\oneb}Z_{\oneb})$ is a $(1,0)$-vector field along the image of $\Phi$). The hypersurface $M=\Phi(S^3)$ bounds a convex domain $\Omega$ which has Kobayashi indicatrix $B\subseteq T_0 \mathbb{C}^2 \cong \mathbb{C}^2$ based at $0$. Let $\Psi:B\to \Omega$ denote the circular representation of $\Omega$ \cite{Lempert1981, BlandDuchamp1991}, which is smooth up to the boundary (and away from the origin). By \cite{BlandDuchamp1991}, equation (3.5), $\Psi|_{\partial B}$ is a contact diffeomorphism from $\partial B$ to $M =\partial\Omega$  and so the CR structure on $M$ pulls back to a deformation $\vf_{M,\partial B}$ of the CR structure on $\partial B$; moreover, $\overline{\vf_{M,\partial B}}$ (when expressed in terms of an $S^1$-invariant framing) has only positive Fourier coefficients with respect to the natural $S^1$ action on $\partial B$ \cite{BlandDuchamp1991}.

The radial projection from $\partial B$ to $S^3$ is clearly $S^1$-equivariant, but is not (in general) a contact diffeomorphism (so it can be thought of as endowing $S^3$ with a second $S^1$-invariant contact distribution). We may correct for this possible discrepancy by using the $S^1$-invariant version of Gray's classical theorem (since our two contact distributions on $S^3$ are isotopic through $S^1$-invariant contact distributions), which tells us that there exists an $S^1$-invariant contact diffeomorphism from $\partial B$ to $S^3$. This contact diffeomorphism allows us to push forward the intrinsic CR structure on $\partial B$ to an $S^1$-invariant CR structure on $S^3$ compatible with the standard contact distribution $H$; with respect to the standard frame $Z_1$ on $S^3$ this CR structure has deformation tensor $\tilde{\vf}_{\partial B, S^3} \in \mathfrak{D}_{cd}$ (by $S^1$-invariance). Using Lemma \ref{lem:CL-slice-mod-S1} there exists an $S^1$-equivariant contact diffeomorphism which pulls the CR structure corresponding to $\tilde{\vf}_{\partial B, S^3}$ back to one with deformation tensor $\vf_{\partial B, S^3} \in \mathfrak{D}'_{cd}$. We shall denote by $\psi$ the contact diffeomorphism $\partial B \to S^3$ that pushes forward the CR structure on $\partial B$ to the one on $S^3$ with deformation tensor $\vf_{\partial B, S^3} \in \mathfrak{D}'_{cd}$. Using this contact diffeomorphism we may also push forward the CR structure with deformation tensor $\vf_{M,\partial B}$ from $\partial B$ to one on $S^3$ with deformation  tensor $\vf_{M,S^3}$. Note that $\vf_{M,S^3}$ (which describes the CR structure of $M$ relative to $S^3$) differs from $\psi_* \vf_{M,\partial B}$, since the latter is the deformation tensor for the CR structure of $M$ relative to the CR structure of $\partial B$ after we have identified $\partial B$ and $M$ with $S^3$ using the circular representation and the map $\psi$. Knowing that the deformation tensor of $\partial B$ relative to $S^3$ is $\vf_{\partial B, S^3}$ and the deformation tensor of $M$ relative to $\partial B$ is $\psi_* \vf_{M,\partial B}$, it is easy to show that
\beq \label{eq:chain-of-defs}
\vf_{M,S^3} = \frac{\psi_*\vf_{M,\partial B}+\vf_{\partial B, S^3}}{1+(\psi_*\vf_{M,\partial B})\cdot\overline{\vf_{\partial B, S^3}}}.
\eeq
We now claim that $\vf_{M,S^3} \in \mathfrak{D}'_{BE}$. Choose a unitary $S^1$-invariant framing $Z_1^{\partial B}$ for the CR structure on $\partial B$ and similarly a unitary $S^1$-invariant framing $Z_1^0$ on the standard CR sphere $S^3$.
Working in these frames the identity \eqref{eq:chain-of-defs} becomes an identity of functions, and $\psi_*\vf_{M,\partial B}$ is just $\vf_{M,\partial B} \circ \psi^{-1}$.
Since $\psi$ is $S^1$-equivariant, it follows from \eqref{eq:chain-of-defs} that $\vf_{M,S^3}$ has only non-positive Fourier coefficients, and moreover, the zeroth Fourier component of $\vf_{M,S^3}$ is simply $\vf_{\partial B, S^3}$.
 When expressed in the standard framing $Z_1, Z_{\oneb}$ of $S^3$ (which are not $S^1$ invariant since $\mathcal{L}_{T} Z_1 = -2i Z_1$) then Fourier coefficients are shifted by $-4$ $(= -2 -2)$ and hence, viewed in this frame, the deformation $\vf_{M,S^3}$ lies in $\mathfrak{D}_{BE}$.
Moreover, since the spherical harmonic coefficients of $\vf_{M,S^3}$ agree with $\vf_{\partial B, S^3}$ along the critical diagonal (which corresponded to the zeroth Fourier mode when using $S^1$-invariant framings) and $\vf_{\partial B, S^3}\in \mathfrak{D}'_{cd}$ we have that $\vf_{M, S^3} \in \mathfrak{D}'_{BE}$ when expressed in terms of the standard framing for $S^3$. This establishes that $(\vf, y_1)$ is in the image of $P_{emb}$ for some marking $y_1$.

It remains to show that $(\vf, y)$ is in the image of $P_{emb}$ for all markings $y$ in a uniform neighborhood of $y_0$ (for $\vf$ sufficiently small). Note that in the preceding argument we could have chosen a different base point $p$ for the Kobayashi indicatrix. Note also that $\mathrm{Aut}(S^3) = \mathrm{Aut}(\mathbb{B}^2)$ acts simply and transitively on the set of pointed frames in $\mathbb{B}^2$. Using this we can act on the marking $y_1$ of $(\vf, y_1)$ while keeping $\vf$ fixed as follows. Given a point $p\in \mathbb{B}^2$ and a unitary frame $(e_1,e_2)$ for $T_p\mathbb{C}^2$ we repeat the above construction of $\vf_{M, S^3} \in \mathfrak{D}'_{BE}$ but now use the Kobayashi indicatrix $B_p$ centered at $p$ and identify $\partial B_p \subseteq T_p \mathbb{C}^2$ with $\partial B = \partial B_0 \subseteq T_0 \mathbb{C}^2$ using the linearization of the automorphism of $\mathbb{B}^2$ that takes $(p,(e_1,e_2))$ to the point $0$ with the standard frame. In this way we obtain a family $(\psi_s,\vf_s)$ of points in $\mathcal{C}\times \mathfrak{D}'_{BE}$ parametrized by $s\in \mathrm{Aut}(S^3)$ whose images under $P_{emb}$ are all of the form $(\vf, y_s)$. Since $\mathrm{Aut}(S^3)\cong Y$ is finite dimensional and the map $s \mapsto y_s$ in the special case $\vf =0$ is just the natural identification, the map $s \mapsto y_s$ is a local diffeomorphism for sufficiently small $\vf$. This proves the result.
\end{proof}

Theorem \ref{thm:slice} now follows from Proposition \ref{prop:emb-slice} and Theorem \ref{thm:CL-slice-mod}.

\begin{remark}
%Bland quotients by contact diffeomorphisms and by S^1-equivariant diffeomorphisms (which do not preserve the contact form)... We keep the underlying contact structure fixed, which allows us to view the deformation in normal form as the deformation of the standard CR structure. ... Bland's normal form views the normalized deformation as the deformation tensor of $M$ (pulled back to $\partial B$) relative to the circular representation $\partial B$ (with $\partial B$ being identified with the unit sphere $S^3$ via the radial projection).
For comparison, we note that in \cite[Theorem 14.2 and Theorem 15.1]{Bland1994} Bland gives a normal form for CR structures and for embeddable CR structures on $S^3$ near the standard structure with respect to the action of contact diffeomorphisms and $S^1$-equivariant diffeomorphisms (which do not preserve the contact distribution). In the notation of the proof of Proposition \ref{prop:emb-slice} Bland's normal form for the embeddable deformation $\vf$ is obtained by pushing $\vf_{M,\partial B}$ forward to $S^3$ using the radial projection from $\partial B$ to $S^3$ and viewing this as a deformation of the CR structure of $\partial B$ pushed forward to $S^3$ (recall that via this identification the contact distribution of $\partial B$ is $S^1$-invariant but does not in general match the standard contact distribution of $S^3$, which is why $S^1$-equivariant diffeomorphisms are needed for this normalization). Our approach has been to keep the underlying contact structure fixed, which allows us to view the deformation in normal form as a deformation of the standard CR structure.
\end{remark}
\vfill

\subsection{Declarations}
\begin{itemize}

\item Funding. The second author was supported in part by the NSF grants DMS-1600701 and DMS-1900955

\item Conflicts of interest/Competing interests. On behalf of all authors, the corresponding author states that there is no conflict of interest.

\item Availability of data and material (data transparency). N/A.

\item Code availability. N/A.

\end{itemize}

\bibliographystyle{plain}
\newcommand{\noopsort}[1]{}

\end{document}